\tikzset{node distance=1.5cm, auto}
\theoremstyle{definition}
\numberwithin{equation}{section}
\theoremstyle{definition}
\newtheorem{algorithm}{Algorithm}[subsection]
\newtheorem{conjecture}[algorithm]{Conjecture}
\newtheorem{corollary}[algorithm]{Corollary}
\newtheorem{definition}[algorithm]{Definition}
\newtheorem{lemma}[algorithm]{Lemma}
\newtheorem{proposition}[algorithm]{Proposition}
\newtheorem{remark}[algorithm]{Remark}
\newtheorem{theorem}[algorithm]{Theorem}
\newcommand{\Spec}{\textrm{Spec}\thinspace}
\DeclareFontFamily{U}{russian}{}
\DeclareFontShape{U}{russian}{m}{n}
        { <5><6> wncyr5
          <7><8><9> wncyr7
          <10><10.95><12><14.4><17.28><20.74><24.88> wncyr10 }{}
\DeclareSymbolFont{Russian}{U}{russian}{m}{n}
\DeclareSymbolFontAlphabet{\mathcyr}{Russian}
\let\@math@cyr\mathcyr
\renewcommand{\mathcyr}[1]{\@math@cyr{\cyracc #1}}
\newcommand{\sha}{\mathcyr{sh}}
\tikzset{
  graphnode/.style = {align=center, inner sep=0pt, scale=0.3, text centered,
    font=\sffamily},
  vi/.style = {graphnode, circle, white, font=\sffamily\bfseries, draw=black,
    fill=black, text width=1em},
  ve/.style = {graphnode, circle, draw=black, 
    text width=1em, thick},
  vx/.style = {graphnode, draw=white,
    minimum width=0em, minimum height=0em},
  vb/.style = {graphnode, diamond, white, draw=black, 
    minimum width=1em, minimum height=1em, thick},
}
\title{Formal weights in Kontsevich's formality construction \\ and multiple zeta values}
\author{Johan Alm}
\address{Stockholm University, Department of Mathematics}
\email{alm@math.su.se}
\begin{document}
\maketitle

\begin{abstract}
We construct a functor \(\boldsymbol{\mathcal{Z}}_{\scriptstyle{\mathrm{ns}}}\) that associates to any dg cooperad \(\mathsf{A}\) of dg commutative algebras (satisfying some conditions) an augmented commutative algebra. When applied to the cohomology operad \(A(M_0^{\delta})\) of Francis Brown's moduli spaces it produces an algebra that formally models the algebra of multiple zeta values. We prove that there is an injection from the graded dual of the Grothendieck-Teichm\"uller Lie algebra into the indecomposables of the algebra \(\boldsymbol{\mathcal{Z}}_{\scriptstyle{\mathrm{ns}}}(\mathsf{coGer})\) associated to the Gerstenhaber cooperad, and that there is a morphism \(\boldsymbol{\mathcal{Z}}_{\scriptstyle{\mathrm{ns}}}(A(M^{\delta}_0))\to \boldsymbol{\mathcal{Z}}_{\scriptstyle{\mathrm{ns}}}(\mathsf{coGer})\) which is surjective on indecomposables.
\end{abstract}
\section*{Introduction}
Let \(k_1,\dots, k_r\) be a sequence of strictly positive integers, with \(k_r\geq 2\). The multiple zeta values (for short, MZVs) are the real numbers
 \[
  \zeta(k_1,\dots,k_r) := \sum_{0<n_1<\dots < n_r} \frac{1}{n_1^{k_1}\cdots n_r^{k_r}}.
 \]
They were studied already by Euler but their extensive study is relatively recent and there are several open ocnjectures regarding them. The deepest result to date concerning these numbers is a theorem due to Francis Brown which says that every period of a mixed Tate motive unramified over the integers is a \(\mathbb{Q}[1/(i2\pi)]\)-linear combination of MZVs.\cite{Brown12} It is relatively easy, using the above displayed series representation, to see that the multiplication of two MZVs is a rational linear combination of MZVs, so they span a subalgebra \(\boldsymbol{\zeta}\) of the real numbers. Most of the open conjectures regarding MZVs concern the structure of this algebra. The number \(k_1+\dots+k_r\) is called the \textit{weight} of the MZV \(\zeta(k_1,\dots,k_r)\) and one of the most fundamental conjectures says that the algebra \(\boldsymbol{\zeta}\) is graded by weight. Define \(\boldsymbol{\zeta}_w\subset\mathbb{R}\) to be the subspace of the real numbers spanned by all MZVs of weight \(w\), \(\boldsymbol{\zeta}_0:=\mathbb{Q}\), and set \(\boldsymbol{\zeta}_{\scriptstyle{\mathrm{formal}}}:=\bigoplus_{w\geq 0}\boldsymbol{\zeta}_w\). (The weight grading conjecture can then be expressed by saying that \(\boldsymbol{\zeta}_{\scriptstyle{\mathrm{formal}}}=\boldsymbol{\zeta}\).) The coefficients of the Knizhnik-Zamolodchikov Drinfel'd associator define a surjection
 \[
  \mathfrak{grt}_1'\oplus \mathbb{Q}\zeta(2) \to Q\boldsymbol{\zeta}_{\scriptstyle{\mathrm{formal}}}
 \]
from the graded dual of the Grothendieck-Teichm\"uller Lie algebra (plus the MZV \(\zeta(2)\)) onto the indecomposable quotient \( Q\boldsymbol{\zeta}_{\scriptstyle{\mathrm{formal}}}=\boldsymbol{\zeta}_{\geq 1}/(\boldsymbol{\zeta}_{\geq 1}\cdot \boldsymbol{\zeta}_{\geq 1})\), see, e.g., \cite{Furusho03}. The map is conjectured to be injective. In this paper we prove some results in a similar vein. First of all we define a functor \(\mathsf{A}\mapsto \boldsymbol{\mathcal{Z}}_{\scriptstyle{\mathrm{ns}}}(\mathsf{A})\) that associates to a cooperad of differential graded (dg, for short) commutative algebras (satisfying some properties) an augmented commutative algebra. Let \(\mathsf{coGer}\) denote the cooperad of Arnol'd algebras, i.e., the cohomology operad of the little disks operad, linearly dual to the operad \(\mathsf{Ger}\) of Gerstenhaber algebras. We prove that there exists a kind of universal \(A_{\infty}\)-structure
 \[
  \boldsymbol{\mathcal{Z}}_{\scriptstyle{\mathrm{ns}}}(\mathsf{coGer})\otimes\mathsf{Ass}_{\infty} \to \boldsymbol{\mathcal{Z}}_{\scriptstyle{\mathrm{ns}}}(\mathsf{coGer})\otimes\mathsf{Ger}
 \]
on Gerstenhaber algebras with coefficients in the algebra \(\boldsymbol{\mathcal{Z}}_{\scriptstyle{\mathrm{ns}}}(\mathsf{coGer})\) and that its coefficients define an \textit{injection}
 \[
  \mathfrak{grt}_1'\oplus \mathbb{Q}\zeta(2) \to Q\boldsymbol{\mathcal{Z}}_{\scriptstyle{\mathrm{ns}}}(\mathsf{coGer}).
 \]
The algebra \(\boldsymbol{\mathcal{Z}}_{\scriptstyle{\mathrm{ns}}}(\mathsf{coGer})\) can be regarded as the algebra of universal coefficients, or \textit{weights}, for Kontsevich's construction in \cite{Kontsevich99} of a formality morphism for the little disks operad, while the universal \(A_{\infty}\) structure plays a r\^ole analogous to that played by the Knizhnik-Zamolodchikov associator (above). The algebra \(\boldsymbol{\mathcal{Z}}_{\scriptstyle{\mathrm{ns}}}(\mathsf{coGer})\) is related to multiple zeta values, as follows. Let \(M_{0,n+1}\) denote the open moduli space of Riemann spheres with \(n+1\) labeled points. Francis Brown introduced in his thesis\cite{Brown09} a kind of partial compactification \(M_{0,n+1}^{\delta}\) of this space, sitting as an intermediary between the open moduli space and the well-known Deligne-Mumford compactifcation. Brown's moduli spaces assemble to a (nonsymmetric) operad, and their cohomology algebras, which we denote \(A(M_{0,n+1}^{\delta})\), assemble to a cooperad of algebras. Brown proved in his thesis that any period integral on one of his partially compactified moduli spaces is a rational linear combination of MZVs. Following this, the algebra \(\boldsymbol{\mathcal{Z}}_{\scriptstyle{\mathrm{ns}}}(A(M_0^{\delta}))\) can be regarded as an algebra of formal multiple zeta values. It is generated by symbols \(I(\alpha)\) corresponding to top-dimensional forms \(\alpha\in A(M_0^{\delta})\), modulo relations corresponding to Stokes' relation and various ``product relations'' that arise naturally from the operadic-geometric structure of the moduli spaces. In particular, there is a surjective morphism of algebras \(\boldsymbol{\mathcal{Z}}_{\scriptstyle{\mathrm{ns}}}(A(M_0^{\delta}))\to \boldsymbol{\zeta}\). We prove that there is a morphism \(\boldsymbol{\mathcal{Z}}_{\scriptstyle{\mathrm{ns}}}(A(M_0^{\delta}))\to \boldsymbol{\mathcal{Z}}_{\scriptstyle{\mathrm{ns}}}(\mathsf{coGer})\), which induces a surjection
 \[
  Q\boldsymbol{\mathcal{Z}}_{\scriptstyle{\mathrm{ns}}}(A(M_0^{\delta}))\to Q\boldsymbol{\mathcal{Z}}_{\scriptstyle{\mathrm{ns}}}(\mathsf{coGer})
 \]
on indecomposables.

\hfill\linebreak
\textit{Many thanks to Dan Petersen and Francis Brown for discussion and helpful criticism.}

\section{Preliminary definitions}
All dg (co)operads are assumed to be (co)augmented, and we will accordingly dispense with the distinction between dg (co)operads and dg pseudo-(co)operads. We otherwise follow the conventions concerning operads adopted in \cite{LodayVallette12}.

The (ordered) set \(\{1,\dots,n\}\) is denoted \([n]\).

If \(V\) is a dg vector space, then we define \(\tau^{\leq d}V\) to be the truncation of \(V\) to all degrees \(\leq d\), i.e., \((\tau^{\leq d}V)^i\) equals \(V^i\) if \(i\leq d\) and is otherwise zero.

\begin{lemma}\label{comonoidal}
The truncation to strictly negative degrees is (non-counitally) symmetric comonoidal.
\end{lemma}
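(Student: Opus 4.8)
The plan is to write down the symmetric comonoidal (i.e.\ oplax symmetric monoidal) structure maps explicitly and then observe that every coherence diagram is inherited, for free, from the ambient symmetric monoidal category of dg vector spaces. Write $\tau:=\tau^{\leq-1}$ for truncation to strictly negative degrees. Since the differential raises degree by one, the graded subspace $\sigma^{\geq0}V:=\bigoplus_{i\geq0}V^{i}$ is a subcomplex of $V$, and $\tau V$ is naturally the quotient complex $V/\sigma^{\geq0}V$; in particular $\tau$ is a functor, as any chain map $f\colon V\to W$ satisfies $f(\sigma^{\geq0}V)\subseteq\sigma^{\geq0}W$ and hence descends to $\tau f\colon\tau V\to\tau W$.

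Next I would build the comultiplication. For graded vector spaces over a field one has $\tau V\otimes\tau W=(V\otimes W)/\bigl(\sigma^{\geq0}V\otimes W+V\otimes\sigma^{\geq0}W\bigr)$, and the canonical projection $V\otimes W\twoheadrightarrow\tau V\otimes\tau W$ is a chain map. The main — and essentially the only — thing to check is the inclusion of subcomplexes
\[
 \sigma^{\geq0}(V\otimes W)\ \subseteq\ \sigma^{\geq0}V\otimes W\ +\ V\otimes\sigma^{\geq0}W ,
\]
which is a one-line degree count: a summand $V^{i}\otimes W^{j}$ of $(V\otimes W)^{n}$ with $n\geq0$ either has $i\geq0$, or has $i\leq-1$ and then $j=n-i\geq1$, so that it lies in the first or the second summand on the right, respectively. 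It follows that the projection $V\otimes W\to\tau V\otimes\tau W$ kills $\sigma^{\geq0}(V\otimes W)$ and therefore factors through a natural (surjective) chain map
\[
 \mu_{V,W}\colon\tau(V\otimes W)\longrightarrow\tau V\otimes\tau W .
\]

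It then remains to verify the comonoidal axioms, and here the key observation is that $\mu_{V,W}$ is, by construction, the unique map whose precomposition with the quotient chain map $V\otimes W\to\tau(V\otimes W)$ is the canonical projection onto $\tau V\otimes\tau W$; since $\tau(V\otimes W)$ is a quotient of $V\otimes W$, any identity between maps out of it may be checked after precomposition with that quotient map, where it reduces to an identity in dg vector spaces. For coassociativity, both composites $\tau(U\otimes V\otimes W)\to\tau U\otimes\tau V\otimes\tau W$ precompose with $U\otimes V\otimes W\to\tau(U\otimes V\otimes W)$ to the canonical projection $U\otimes V\otimes W\to\tau U\otimes\tau V\otimes\tau W$, hence agree. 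For the symmetry axiom, the braiding of dg vector spaces (Koszul signs included) preserves degree and so descends along the quotient maps to $\tau$, and the compatibility $\mu_{W,V}\circ\tau(\beta_{V,W})=\beta_{\tau V,\tau W}\circ\mu_{V,W}$ again follows by precomposing with $V\otimes W\to\tau(V\otimes W)$ and invoking naturality of $\beta$ with respect to $V\to\tau V$ and $W\to\tau W$.

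Finally, the monoidal unit $\bk$ is concentrated in degree $0$, so $\tau\bk=0$ and there is no candidate counit $\tau\bk\to\bk$ — which is exactly why the structure is only \emph{non-counital}. I expect no genuine obstacle in the argument: the single substantive step is the displayed inclusion of subcomplexes, and it is worth recording that this inclusion already fails for the truncations $\tau^{\leq d}$ with $d\geq1$ (take $V=W=\bk$ placed in degree $d$, so that $V\otimes W$ sits in degree $2d\geq d+1$ but in neither summand on the right), so one cannot push the construction beyond degree $0$.
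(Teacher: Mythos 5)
Your proof is correct and follows essentially the same route as the paper: the structure map is the same projection onto the summands $V^i\otimes W^j$ with both $i,j\leq-1$, and coassociativity and symmetry are verified by the same observation that all composites reduce to a single canonical projection. The only difference is that you make explicit (via the subcomplex inclusion $\sigma^{\geq0}(V\otimes W)\subseteq\sigma^{\geq0}V\otimes W+V\otimes\sigma^{\geq0}W$) why the projection is compatible with the differentials, a point the paper leaves implicit; this is a welcome addition rather than a deviation.
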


\begin{proof}
The map
 \[
 \tau^{\leq -1}(A\otimes B) \to \tau^{\leq -1}(A)\otimes \tau^{\leq -1}(B)
 \]
is defined by the projections
 \[
 \bigoplus_{i+j=p} A^i\otimes B^j \to \bigoplus_{i+j=p, i,j\leq-1} A^i\otimes B^j
 \]
that discards summands \(A^i\otimes B^j\) not having both \(i\) and \(j\leq -1\). That the two maps
 \[
 \tau^{\leq -1}(A\otimes B\otimes C)\to \tau^{\leq -1}(A)\otimes \tau^{\leq -1}(B\otimes C)\to\tau^{\leq -1}(A)\otimes\tau^{\leq -1}(B)\otimes \tau^{\leq -1}(C)
 \]
and
 \[
 \tau^{\leq -1}(A\otimes B\otimes C)\to \tau^{\leq -1}(A\otimes B)\otimes \tau^{\leq -1}(C)\to \tau^{\leq -1}(A)\otimes \tau^{\leq -1}(B)\otimes \tau^{\leq -1}(C)
 \]
agree is evident: both composites are given by the map that projects out all factors \(A^i\otimes B^j\otimes C^ k\) not having \(i,j,k\leq -1\). The symmetry is likewise clear.
\end{proof}
\section{Algebras of formal weights}
\begin{definition}
A DGCA is a unital dg commutative \(\mathbb{Q}\)-algebra. An \emph{augmentation} of a DGCA  \(A\) is a morphism of DGCAs \(\varepsilon: A\to \mathbb{Q}\). We define \(\boldsymbol{\varepsilon}\mathsf{DGCA}:=(\mathsf{DGCA}\downarrow \mathbb{Q})\) to be the category of augmented DGCAs.

An \emph{ns DGCA cooperad with pullbacks and augmentations} is a conilpotent nonsymmetric cooperad \(\mathsf{A}^{\pi}\) in the category of augmented DGCAs, with \(\mathsf{A}^{\pi}(0)=\mathbb{Q}\). We shall denote the category of DGCA ns cooperads with pullbacks and augmentations by \(\mathsf{coOp}_{\scriptstyle{\mathrm{ns}}}^{\pi}(\boldsymbol{\varepsilon}\mathsf{DGCA})\).
\end{definition}
\begin{remark}
Let \(\mathsf{A}^{\pi}\) be an ns DGCA cooperad with pullbacks and augmentations and let \(\iota:[k]\to [n]\) be an order-preserving injection. Because of the assumption that \(\mathsf{A}^{\pi}(0)=\mathbb{Q}\), there is a unique cooperadic cocomposition \(\pi^*_{\iota} : \mathsf{A}^{\pi}(k) \to \mathsf{A}^{\pi}(n)\), given by a rooted planar \(n\)-corolla to which we graft the basis vector of \(\mathsf{A}(0)\) to all legs not in \(\iota([k])\). We call these cocompositions the \textit{pullback maps} of the ns DGCA cooperad. Let \(\mathsf{A}\) be the cooperad obtained by restricting \(\mathsf{A}^{\pi}\) to only arities \(\geq 1\). The cooperad \(\mathsf{A}^{\pi}\) and its cocomposition can be recovered from \(\mathsf{A}\) and the pullback maps (the latter now considered not as part of the cocomposition, but as an additional piece of structure on \(\mathsf{A}\)). We shall henceforth always regard an ns DGCA cooperad with pullbacks as a cooperad \(\mathsf{A}\) in the category of DGCAs, concentrated in positive arities and equipped with a family of pullback maps \(\pi^*_{\iota}\) satisfying the evident axioms.
\end{remark}
Given any dg Lie coalgebra \(\mathfrak{a}\), we shall let \(C(\mathfrak{a})\) denote the cofree DGCA \(S(\mathfrak{a}[-1])\) with differential \(d+\delta\), where \(d\) is the differential on \(\mathfrak{a}\) extended as a graded derivation and \(\delta\) is a Chevalley-Eilenberg type differential, defined by extending the Lie cobracket \(\mathfrak{a}\to \mathfrak{a}\wedge\mathfrak{a}\) as a derivation. We remark that given any dg cooperad \(\mathsf{C}\), the total complex 
 \[
 \mathrm{Tot}^{\oplus}\mathsf{C}\{-1\} = \bigoplus_{n\geq 1}\mathsf{C}(n)[n-1]
 \]
is a dg Lie coalgebra with cobracket defined by the cocompositions; so the dual dg Lie algebra is 
 \[
  ( \mathrm{Tot}^{\oplus}\mathsf{C}\{-1\} )^* = \mathrm{Tot}\,\mathsf{C}^*\{1\} 
 \]
the usual dg Lie algebra structure on the total space of an operad.
It follows from \ref{comonoidal} that if \(\mathfrak{a}\) is a dg Lie coalgebra, then so is \(\tau^{\leq -1}(\mathfrak{a})\).
\begin{definition}
Let \(\mathsf{A}\) be an ns DGCA cooperad with pullbacks and augmentations. Define the \textit{algebra of formal weights} on \(\mathsf{A}\), which we shall denote \(\mathfrak{Z}_{\scriptstyle{\mathrm{ns}}}(\mathsf{A})\), to be the quotient of \(C(\tau^{\leq -1}(\mathrm{Tot}^{\oplus}\mathsf{A}\{-1\}))\) by the following relations, which we term the \textit{product map relations}: 

Assume \(\alpha\in \mathsf{A}(n)\), \(\alpha'\in\mathsf{A}(n')\), where \(n,n'\geq 2\) and that we are given order-preserving injections \(\iota: [n]\to [n+n'-2]\) and \(\iota': [n']\to [n+n'-2]\). Define \(Sh(\iota,\iota')\) to be the subgroup of the permutation group \(\Sigma_{n+n'-2}\) that consists of only those permuations \(\sigma\) that have the property that both \(\sigma\circ\iota\) and \(\sigma\circ\iota'\) are oder-preserving. Define the shuffle of \(\alpha\) and \(\alpha'\) with respect to \(\iota\) and \(\iota'\) to be the sum
 \[
 \alpha\,\sha_{\iota,\iota'}\, \alpha' := \sum_{\sigma\in Sh(\iota,\iota')}
 (-1)^{\vert\sigma\vert}\pi_{\sigma\circ\iota}^*(\alpha) \wedge \pi_{\sigma\circ\iota'}^*(\alpha'),
 \]
where \(\wedge\) is the product in the algebra \(\mathsf{A}(n+n'-2)\) and \(\vert\sigma\vert\) is the parity of the permutation \(\sigma\). The product map relations are that
 \[
  \alpha \cdot \alpha' = \alpha\, \sha_{\iota,\iota'}\, \alpha'
 \]
for all pairs of order-preserving injections \(\iota\) and \(\iota'\).
\end{definition}

\begin{remark}
The above defined construction is a functor 
 \[
 \mathfrak{Z}_{\scriptstyle{\mathrm{ns}}}:\mathsf{coOp}_{\scriptstyle{\mathrm{ns}}}^{\pi}(\boldsymbol{\varepsilon}\mathsf{DGCA}) \to \boldsymbol{\varepsilon}\mathsf{DGCA}
 \]
from ns DGCA cooperads with pullbacks and augmentations to DGCAs. The algebra unit \(u\) of the algebra \(\mathsf{A}(2)\) is sent to the unit in \(\mathfrak{Z}_{\scriptstyle{\mathrm{ns}}}(\mathsf{A})\) because every pullback \(\pi_{\iota}^*(u)\) has to be a unit, and then the product map relations imply that \(\alpha\cdot u = u\cdot \alpha = 1\) for every \(\alpha\). The augmentation \(\varepsilon: \mathfrak{Z}_{\scriptstyle{\mathrm{ns}}}(\mathsf{A})\to\mathbb{Q}\) is defined by extending the augmentation \(\varepsilon:\mathsf{A}(2)\to\mathbb{Q}\) by zero on all \(\mathsf{A}(n)\), \(n\neq 2\).
\end{remark}

\begin{definition}
We shall refer to \(\mathfrak{Z}_{\scriptstyle{\mathrm{ns}}}\) as the \textit{functor of formal weights}. Analogously, we define the \textit{functor of cohomological weights} 
 \[
 \boldsymbol{\mathcal{Z}}_{\scriptstyle{\mathrm{ns}}}:=H^0\circ \mathfrak{Z}_{\scriptstyle{\mathrm{ns}}}:\mathsf{coOp}_{\scriptstyle{\mathrm{ns}}}^{\pi}(\boldsymbol{\varepsilon}\mathsf{DGCA}) \to \boldsymbol{\varepsilon}\mathsf{CAlg}
 \]
to be the functor into augmented unital commutative algebras that is defined by postcomposing the functor of formal weights with taking the degree zero cohomology. For an augmented dg algebra \(B\) with augmentation ideal \(B^+\), we define the space of indecomposables of the algebra \(B\) to be the quotient \(QB:=B^+/(B^+\cdot B^+)\). Taking indecomposables is functorial and we shall accordingly refer to \(Q\mathfrak{Z}_{\scriptstyle{\mathrm{ns}}}\) as the functor of \textit{indecomposable formal weights}, and to \(Q \boldsymbol{\mathcal{Z}}_{\scriptstyle{\mathrm{ns}}}\) as the functor of \textit{indecomposable cohomological weights}.
\end{definition}

\begin{remark}
An immediate consequence of functoriality is that if \(\mathsf{A}\) carries an action by some group, then the action transfers to an action on the formal weights and on the formal cohomological weights of \(\mathsf{A}\). Furthermore, the long exact sequence associated to the short exact sequence given by a surjection shows that a surjection of complexes concentrated in degrees \(\leq 0\) induces a surjection on degree \(0\) cohomology, so the functor of cohomological weights preserves surjections.
\end{remark}

\begin{lemma}\label{equalityofindecomposables}
There is a natural equality \(H^0(Q\mathfrak{Z}_{\scriptstyle{\mathrm{ns}}})=Q \boldsymbol{\mathcal{Z}}_{\scriptstyle{\mathrm{ns}}}\). 
\end{lemma}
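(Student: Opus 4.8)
The plan is to exploit that $\mathfrak{Z}_{\mathrm{ns}}(\mathsf{A})$ is, by construction, a dg commutative algebra concentrated in non-positive cohomological degrees: the truncation $\tau^{\leq -1}$ produces a dg Lie coalgebra concentrated in degrees $\leq -1$, the shift in $C(-)=S(-[-1])$ moves its generators into degrees $\leq 0$, and the free commutative product together with the differential $d+\delta$ preserve non-positivity. For a complex $V$ concentrated in degrees $\leq 0$ one has $H^{i}(V)=0$ for $i>0$ and $H^{0}(V)=V^{0}/dV^{-1}$, so on such complexes $H^{0}$ is a cokernel-type (right exact) functor; the passage to indecomposables is visibly of the same kind, and the lemma is precisely the statement that these two operations commute. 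I would make this precise using the short exact sequence of complexes
\[
0\longrightarrow \mathfrak{Z}^{+}\cdot\mathfrak{Z}^{+}\longrightarrow \mathfrak{Z}^{+}\longrightarrow Q\mathfrak{Z}_{\mathrm{ns}}(\mathsf{A})\longrightarrow 0,
\]
where $\mathfrak{Z}=\mathfrak{Z}_{\mathrm{ns}}(\mathsf{A})$, $\mathfrak{Z}^{+}$ is its augmentation ideal, and $\mathfrak{Z}^{+}\cdot\mathfrak{Z}^{+}$ is a differential ideal because $d+\delta$ is a derivation.

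First I would record two routine facts. Since the augmentation $\varepsilon\colon\mathfrak{Z}\to\mathbb{Q}$ is a morphism of complexes with target concentrated in degree $0$, the ideal $\mathfrak{Z}^{+}$ is a subcomplex, $\mathfrak{Z}\cong\mathbb{Q}\cdot 1\oplus\mathfrak{Z}^{+}$ as complexes, and hence $H^{0}(\mathfrak{Z}^{+})=H^{0}(\mathfrak{Z})^{+}=\boldsymbol{\mathcal{Z}}_{\mathrm{ns}}(\mathsf{A})^{+}$. Moreover all three terms of the short exact sequence are concentrated in degrees $\leq 0$, so $H^{1}(\mathfrak{Z}^{+}\cdot\mathfrak{Z}^{+})=0$ and the long exact cohomology sequence ends with
\[
H^{0}(\mathfrak{Z}^{+}\cdot\mathfrak{Z}^{+})\xrightarrow{\ \iota\ }H^{0}(\mathfrak{Z}^{+})\xrightarrow{\ q\ }H^{0}\bigl(Q\mathfrak{Z}_{\mathrm{ns}}(\mathsf{A})\bigr)\longrightarrow 0 .
\]
Thus $q$ is surjective with $\ker q=\operatorname{im}\iota$, giving $H^{0}(Q\mathfrak{Z}_{\mathrm{ns}}(\mathsf{A}))\cong\boldsymbol{\mathcal{Z}}_{\mathrm{ns}}(\mathsf{A})^{+}/\operatorname{im}\iota$, and everything comes down to computing $\operatorname{im}\iota$.

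The identification $\operatorname{im}\iota=\boldsymbol{\mathcal{Z}}_{\mathrm{ns}}(\mathsf{A})^{+}\cdot\boldsymbol{\mathcal{Z}}_{\mathrm{ns}}(\mathsf{A})^{+}$ is the only place where the grading is really used, and the one point I expect to require any care. Since $\mathfrak{Z}$ vanishes in positive degrees, a homogeneous product $ab$ with $a,b\in\mathfrak{Z}^{+}$ can have degree $0$ only if $a$ and $b$ already have degree $0$; hence $(\mathfrak{Z}^{+}\cdot\mathfrak{Z}^{+})^{0}=(\mathfrak{Z}^{+})^{0}\cdot(\mathfrak{Z}^{+})^{0}$, and since every degree-$0$ element of a non-positively graded complex is a cocycle, $H^{0}(\mathfrak{Z}^{+}\cdot\mathfrak{Z}^{+})$ is spanned by classes $[ab]$ with $a,b\in(\mathfrak{Z}^{+})^{0}$. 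Such a class is carried by $\iota$ to $[a]\cdot[b]$; conversely each class in $H^{0}(\mathfrak{Z}^{+})=\boldsymbol{\mathcal{Z}}_{\mathrm{ns}}(\mathsf{A})^{+}$ has a degree-$0$ representative, so every product $[a]\cdot[b]$ is of this form. Therefore $\operatorname{im}\iota=\boldsymbol{\mathcal{Z}}_{\mathrm{ns}}(\mathsf{A})^{+}\cdot\boldsymbol{\mathcal{Z}}_{\mathrm{ns}}(\mathsf{A})^{+}$, and substituting back yields $H^{0}(Q\mathfrak{Z}_{\mathrm{ns}}(\mathsf{A}))=Q\boldsymbol{\mathcal{Z}}_{\mathrm{ns}}(\mathsf{A})$. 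Naturality in $\mathsf{A}$ is automatic, since $\tau^{\leq -1}$, $C(-)$, the augmentation ideal, its square, $H^{0}$, and the formation of indecomposables are all functorial and the isomorphism just produced is induced throughout by canonical quotient maps.
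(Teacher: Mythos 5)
Your proof is correct and follows essentially the same route as the paper: the short exact sequence $0\to\mathfrak{Z}^{+}\cdot\mathfrak{Z}^{+}\to\mathfrak{Z}^{+}\to Q\mathfrak{Z}_{\mathrm{ns}}\to 0$ together with the vanishing of cohomology in positive degrees. The only difference is that where the paper identifies the image of $H^{0}(\mathfrak{Z}^{+}\cdot\mathfrak{Z}^{+})$ via the surjection $\mathfrak{Z}^{+}\odot\mathfrak{Z}^{+}\to\mathfrak{Z}^{+}\cdot\mathfrak{Z}^{+}$ and the K\"unneth formula, you do it by direct inspection of degree-zero elements — an equally valid and if anything more explicit version of the same step.
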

\begin{proof}
A surjection of complexes concentrated in degrees \(\leq 0\) induces a surjection on degree \(0\) cohomology. Applying this to 
 \[
  0\to \mathfrak{Z}_{\scriptstyle{\mathrm{ns}}}^+\cdot\mathfrak{Z}_{\scriptstyle{\mathrm{ns}}}^+ \to \mathfrak{Z}_{\scriptstyle{\mathrm{ns}}}^+
  \to Q\mathfrak{Z}_{\scriptstyle{\mathrm{ns}}}\to 0,
 \]
we can conclude that
 \[
  H^0(Q\mathfrak{Z}_{\scriptstyle{\mathrm{ns}}}) = 
  \mathrm{Coker}(H^0(\mathfrak{Z}_{\scriptstyle{\mathrm{ns}}}^+\cdot \mathfrak{Z}_{\scriptstyle{\mathrm{ns}}}^+)\to H^0(\mathfrak{Z}_{\scriptstyle{\mathrm{ns}}}^+)).
 \]
Applying the same argument to the surjection \(\mathfrak{Z}_{\scriptstyle{\mathrm{ns}}}^+\odot\mathfrak{Z}_{\scriptstyle{\mathrm{ns}}}^+ \to \mathfrak{Z}_{\scriptstyle{\mathrm{ns}}}^+\cdot \mathfrak{Z}_{\scriptstyle{\mathrm{ns}}}^+\) and using the K\"unneth formula proves
 \[
 \mathrm{Im}(H^0(\mathfrak{Z}_{\scriptstyle{\mathrm{ns}}}^+\cdot \mathfrak{Z}_{\scriptstyle{\mathrm{ns}}}^+)\to H^0(\mathfrak{Z}_{\scriptstyle{\mathrm{ns}}}^+))
 =
 \mathrm{Im}(H^0(\mathfrak{Z}_{\scriptstyle{\mathrm{ns}}}^+)\odot H^0(\mathfrak{Z}_{\scriptstyle{\mathrm{ns}}}^+)\to H^0(\mathfrak{Z}_{\scriptstyle{\mathrm{ns}}}^+)).
 \]
\end{proof}
\subsection{Formal weights and deformations}
Given an ns DGCA cooperad \(\mathsf{A}\) with pullbacks and augmentations, set
 \[
 \mathfrak{e}(\mathsf{A}):= \mathrm{Tot}^{\oplus} \mathsf{A}\{-1\},
 \]
as a graded vector space, and equip it with the dg Lie coalgebra structure dual to \(\mathrm{Def}_{\scriptstyle{\mathrm{ns}}}(\mathsf{As}_{\infty}\xrightarrow{\varepsilon} \mathsf{A}^*)\).

Let \(\mathsf{As}\) be the nonsymmetric operad of (nonunital) associative algebras, \(\mathsf{As}(n)=\mathbb{Q}\) for all \(n\geq 1\).
\begin{remark}
Assume that \(\mathsf{A}\) is an ns cooperad in the category of augmented DGCAs (with or without pullbacks). The augmentation \(\varepsilon:\mathsf{A}(2)\to\mathbb{Q}\) defines a morphism of ns dg cooperads
 \[
 \varepsilon: \mathsf{A}\to \mathsf{coAs}.
 \]
Equivalently, it defines a morphism of ns dg operads
 \(
 \varepsilon: \mathsf{As} \to \mathsf{A}^*
 \)
into the dg operad linearly dual to \(\mathsf{A}\). In particular, the augmentation defines a Maurer-Cartan element in
 \[
 \mathrm{Conv}_{\scriptstyle{\mathrm{ns}}}(\mathsf{A},\mathsf{As}\{1\}) = \mathrm{Tot}\,\mathsf{A}^*\{1\}
 = \mathrm{Conv}_{\scriptstyle{\mathrm{ns}}}(\mathsf{coAs}\{-1\},\mathsf{A}^*).
 \]
\end{remark}
\begin{proof}
The Maurer-Cartan equation in \(\mathrm{Conv}_{\scriptstyle{\mathrm{ns}}}(\mathsf{coAs}\{-1\},\mathsf{A}^*)\) is that \(d\varepsilon+\varepsilon\circ\varepsilon=0\). First, \(d\varepsilon=0\) becase the augmentation is a chain map. The equation \(\varepsilon\circ\varepsilon = -\varepsilon\circ_1\varepsilon + \varepsilon \circ_2\varepsilon=0\) (corresponding to the associativity of the binary generator of \(\mathsf{As}\)) holds because both summands equal the augmentation on the algebra \(\mathsf{A}(3)\), hence cancel each other.
\end{proof}
Let \(B\) be a DGCA and \(\mathfrak{a}\) a dg Lie coalgebra. There is a bijection between morphisms of DGCAs 
 \[
 C(\mathfrak{a}) \to  B
 \]
and Maurer-Cartan elements in the dg Lie algebra \(B\otimes \mathfrak{a}^*\). Equivalently, the dg Lie algebra \(C(\mathfrak{a})\otimes\mathfrak{a}^*\) has a Maurer-Cartan element which is universal, in the sense that any other Maurer-Cartan element is obtained from it by a changing coefficients (applying \(B\otimes_{C(\mathfrak{a})}(\,)\)) to some \(C(\mathfrak{a})\)-algebra \(B\). 

Assume that \(\vartheta\) is a Maurer-Cartan element of \(\mathfrak{a}^*\). We can then form the twisted dg Lie algebra \(\mathfrak{a}^*_{\vartheta}\), by adding the term \([\vartheta,\,]\) to the differential. If \(\chi\) is Maurer-Cartan in \(B\otimes \mathfrak{a}^*_{\vartheta}\), then \(\vartheta+\chi\) is Maurer-Cartan in \(B\otimes \mathfrak{a}^*\). Thus, if \(\mathfrak{a}_{\vartheta}\) is the evident dg Lie coalgebra dual to \(\mathfrak{a}^*_{\vartheta}\), then the dg Lie algebra \(C(\mathfrak{a}_{\vartheta})\otimes \mathfrak{a}^*\) has a Maurer-Cartan element which is universal among Maurer-Cartan elements of the form \(\vartheta+\chi\). Interpreting Maurer-Cartan elements as morphisms of DGCAs this says that there exists a universal factorization \(C(\mathfrak{a})\to C(\mathfrak{a}_{\vartheta})\). 
\begin{remark}
There is a projection \(C(\mathrm{Tot}^{\oplus}\mathsf{A}\{-1\})\to C(\tau^{\leq -1}(\mathrm{Tot}^{\oplus}\mathsf{A}\{-1\}))\), hence a projection \(I:C(\mathrm{Tot}^{\oplus}\mathsf{A}\{-1\})\to\mathfrak{Z}_{\scriptstyle{\mathrm{ns}}}(\mathsf{A})\). It sends the unit \(u\) of the algebra \(\mathsf{A}(2)\) to the unit; hence it can be factored as a composite
 \[
  C(\mathrm{Tot}^{\oplus}\mathsf{A}\{-1\}) \to C(\mathfrak{e}(\mathsf{A}))\stackrel{I^+}{\to} \mathfrak{Z}_{\scriptstyle{\mathrm{ns}}}(\mathsf{A}).
 \]
The Maurer-Cartan element 
 \[
 I\in \mathfrak{Z}_{\scriptstyle{\mathrm{ns}}}(\mathsf{A})\otimes \mathrm{Conv}_{\scriptstyle{\mathrm{ns}}}(\mathsf{coAs}\{-1\},\mathsf{A}^*)
 \]
can be regarded as the universal solution to the Maurer-Cartan equation that, first of all, deforms \(\varepsilon\), and secondly, has coefficients that satisfy the product map relations.

Alternatively, we can look at the Maurer-Cartan element \(I^+=I-\varepsilon\) in
 \(
  \mathfrak{Z}^+_{\scriptstyle{\mathrm{ns}}}(\mathsf{A})\otimes \mathrm{Def}_{\scriptstyle{\mathrm{ns}}}(\mathsf{As}_{\infty}\stackrel{\varepsilon}{\to}\mathsf{A}^*).
 \)
\end{remark}
\begin{lemma}\label{surjectiononindecomposables}
The morphism \(I^+:C(\tau^{\leq -1}\mathfrak{e}) \to \mathfrak{Z}_{\scriptstyle{\mathrm{ns}}}\) defines a natural surjection 
 \[
  H^0Q(I^+): H^1(\tau^{\leq 0}(\mathfrak{e}[-1]))\to Q\boldsymbol{\mathcal{Z}}_{\scriptstyle{\mathrm{ns}}}.
 \]
\end{lemma}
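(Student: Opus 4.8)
The plan is to deduce the statement from surjectivity of $I^+$ itself, by applying in turn the indecomposables functor $Q$ and then degree-zero cohomology, and finally matching the two sides with the named cohomology groups. First I would verify that $I^+$ is a surjection of dg algebras. The morphism $I\colon C(\mathrm{Tot}^{\oplus}\mathsf{A}\{-1\})\to\mathfrak{Z}_{\scriptstyle{\mathrm{ns}}}(\mathsf{A})$ is by construction the composite of the algebra projection $C(\mathrm{Tot}^{\oplus}\mathsf{A}\{-1\})\to C(\tau^{\leq-1}(\mathrm{Tot}^{\oplus}\mathsf{A}\{-1\}))$ (which exists since $\tau^{\leq-1}$ is comonoidal, Lemma~\ref{comonoidal}) with the defining quotient by the product map relations; both are surjective, hence $I$ is. Now $I$ factors as $C(\mathrm{Tot}^{\oplus}\mathsf{A}\{-1\})\to C(\mathfrak{e}(\mathsf{A}))\xrightarrow{I^+}\mathfrak{Z}_{\scriptstyle{\mathrm{ns}}}$, the first arrow being the universal factorization, which on underlying graded algebras is an isomorphism (it merely translates the generator $u$ by the algebra unit); and $I^+$ further factors through the truncation projection $C(\mathfrak{e}(\mathsf{A}))\to C(\tau^{\leq-1}\mathfrak{e})$, because $\mathfrak{Z}_{\scriptstyle{\mathrm{ns}}}$ is already presented as a quotient of the $C$ of a truncation to degrees $\leq-1$. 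Chasing these factorizations shows that $I^+\colon C(\tau^{\leq-1}\mathfrak{e})\to\mathfrak{Z}_{\scriptstyle{\mathrm{ns}}}$ is surjective. (Surjectivity concerns only the underlying graded algebras, so the change of differential distinguishing $\mathfrak{e}(\mathsf{A})$ from $\mathrm{Tot}^{\oplus}\mathsf{A}\{-1\}$ is irrelevant here.)

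Next I would transport surjectivity along $Q$ and along $H^0$. The functor $Q=(-)^+/((-)^+)^2$ is right exact, so $Q(I^+)\colon QC(\tau^{\leq-1}\mathfrak{e})\to Q\mathfrak{Z}_{\scriptstyle{\mathrm{ns}}}$ is again a degreewise surjection of complexes; and both complexes are concentrated in degrees $\leq0$, since $\tau^{\leq-1}\mathfrak{e}$ sits in degrees $\leq-1$, so $C$ of it sits in degrees $\leq0$, and passing to indecomposables and to the quotient $\mathfrak{Z}_{\scriptstyle{\mathrm{ns}}}$ cannot raise degrees. By the long exact sequence argument recorded before the statement --- a surjection of complexes concentrated in degrees $\leq0$ is onto on $H^0$, the kernel complex being in degrees $\leq0$ and hence having vanishing $H^1$ --- the induced map $H^0Q(I^+)$ is surjective.

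It remains to identify the two ends. For the target, Lemma~\ref{equalityofindecomposables} gives $H^0Q\mathfrak{Z}_{\scriptstyle{\mathrm{ns}}}=Q\boldsymbol{\mathcal{Z}}_{\scriptstyle{\mathrm{ns}}}$. For the source I would use that, for any dg Lie coalgebra $\mathfrak{b}$, the indecomposables of the cofree DGCA $C(\mathfrak{b})=S(\mathfrak{b}[-1])$ form, as a complex, precisely the word-length-one summand $\mathfrak{b}[-1]$: the Chevalley-Eilenberg component of the differential on $C(\mathfrak{b})$ raises word length, so it acts by zero on $QC(\mathfrak{b})$, whereas the internal differential of $\mathfrak{b}$ descends. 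Applying this with $\mathfrak{b}=\tau^{\leq-1}\mathfrak{e}$ and rewriting $(\tau^{\leq-1}\mathfrak{e})[-1]=\tau^{\leq0}(\mathfrak{e}[-1])$, a brief degree computation identifies $H^0QC(\tau^{\leq-1}\mathfrak{e})$ with $H^1(\tau^{\leq0}(\mathfrak{e}[-1]))$. Naturality is then automatic, since $I$, the factorizations producing $I^+$, the functors $Q$ and $H^0$, and both identifications are natural in $\mathsf{A}$.

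The step I expect to demand the most care is the first one: making precise that the universal factorization followed by the truncation projection genuinely lets $I^+\colon C(\tau^{\leq-1}\mathfrak{e})\to\mathfrak{Z}_{\scriptstyle{\mathrm{ns}}}$ inherit surjectivity (in particular, that $I^+$ descends to $C(\tau^{\leq-1}\mathfrak{e})$ at all), together with keeping the degree shift straight in the final identification so that the source appears as $H^1(\tau^{\leq0}(\mathfrak{e}[-1]))$. Everything else is formal, resting only on right-exactness of $Q$ and on concentration in non-positive degrees.
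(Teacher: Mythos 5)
Your proposal is correct and follows essentially the same route as the paper: use that $Q$ preserves surjections to get a surjection $QC(\tau^{\leq-1}\mathfrak{e})=\tau^{\leq 0}(\mathfrak{e}[-1])\to Q\mathfrak{Z}_{\scriptstyle{\mathrm{ns}}}$, invoke the long-exact-sequence fact that a surjection of complexes concentrated in degrees $\leq 0$ is onto in degree-zero cohomology, and conclude with Lemma~\ref{equalityofindecomposables}. The only differences are cosmetic: you verify the surjectivity of $I^+$ and the identification $QC(\tau^{\leq-1}\mathfrak{e})\cong(\tau^{\leq-1}\mathfrak{e})[-1]$ in more detail, while the paper instead records an explicit description of the kernel (which it only needs later, in Lemma~\ref{injectivebraids}).
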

\begin{proof}
For the proof, define \(\mathfrak{e}^+(\mathsf{A})\) to be the kernel of the augmentation \(\mathsf{A}(2)\to\mathbb{Q}\), extended by \(0\) to a map \(\mathfrak{e}\to\mathbb{Q}\). The functor of taking indecomposables preserves surjections, so we get a surjection \(\tau^{\leq 0}(\mathfrak{e}[-1])\to Q\mathfrak{Z}_{\scriptstyle{\mathrm{ns}}}\). Its kernel can be suggestively denoted \(\mathbb{Q}u\oplus\tau^{\leq 0}(\mathfrak{e}^+[-1])\,\sha\,\tau^{\leq 0}(\mathfrak{e}^+[-1])\); it is spanned by the unit \(u\in\mathsf{A}(2)\) and all possible \(\alpha\, \sha_{\iota,\iota'}\, \alpha'\in\mathfrak{e}^+[-1]\) with \(\alpha,\alpha'\in\mathfrak{e}^+[-1]\). Since there are no elements in strictly positive degrees, the long exact sequence associated to
 \[
  0 \to \mathbb{Q}u\oplus\tau^{\leq 0}(\mathfrak{e}^+[-1])\,\sha\,\tau^{\leq 0}(\mathfrak{e}^+[-1]) \to \tau^{\leq 0}(\mathfrak{e}[-1])
  \to Q\mathfrak{Z}_{\scriptstyle{\mathrm{ns}}}
 \]
ends with
 \[
  \dots\to H^0(\tau^{\leq 0}(\mathfrak{e}[-1]))\to H^0(Q\mathfrak{Z}_{\scriptstyle{\mathrm{ns}}})\to 0.
 \]
We then apply \ref{equalityofindecomposables}, which said \(H^0(Q\mathfrak{Z}_{\scriptstyle{\mathrm{ns}}})=Q\boldsymbol{\mathcal{Z}}_{\scriptstyle{\mathrm{ns}}}\).
\end{proof}
\subsection{Motivational examples}
\subsubsection{First example}
Let \(\chi_n\) be the set of all unordered pairs \(\{i,j\}\subset [n]\) such that \(i\) and \(j\) are not consecutive in the cyclic ordering. The ring of functions on the moduli space \(M_{0,n}\) of projective lines with \(n\) marked (ordered) points can be written in the form \(\mathbb{Z}[u_{ij},u_{ij}^{-1}\mid \{i,j\}\in \chi_n]/I_n\), for \(u_{ij}\) the cross-ratio
 \[
 \frac{(z_i-z_{j+1})(z_{i+1}-z_j)}{(z_i-z_j)(z_{i+1}-z_{j+1})},
 \]
and \(I_n\) the ideal of relations satisfied by those cross-ratios. Brown introduced in his thesis \cite{Brown09} a kind of partial compactification
 \[
 M_{0,n}^{\delta}:= \Spec \mathbb{Z}[u_{ij}\mid \{i,j\}\in \chi_n]/I_n
 \]
of the moduli space. It sits as an intermediary \(M_{0,n}\subset M_{0,n}^{\delta} \subset \overline{M}_{0,n}\) between the open moduli space and the Deligne-Mumford compactification as the space obtained by adding only those boundary divisors of the Deligne-Mumford compactification that bound the connected component of the set of real points \(M_{0,n}(\mathbb{R})\) corresponding to having the marked points in the canonical order \(z_1<\dots < z_n\). The closure of this connected component inside \(M_{0,n}^{\delta}(\mathbb{C})\), call it \(K_{n-1}\), is an associaheder of dimension \(n-3\).

The Deligne-Mumford compactification is well known to assemble (for varying \(n\)) into a cyclic operad. Since only some boundary strata are allowed in Brown's compactification, his spaces do not admit a natural permutation action on point labels--however, they do assemble into an ns operad. Define \(A(M_{0,n+1}^{\delta})\) to be the de Rham complex of algebraic forms on \(M_{0,n+1}^{\delta}\) with logarithmic singularities on \(\overline{M}_{0,n+1}\setminus M_{0,n+1}^{\delta}\). It follows from Deligne's \cite{Deligne71} that \(A(M_{0,n+1}^{\delta})\) has trivial differential and that \(A(M_{0,n+1}^{\delta})\to H(M_{0,n+1}^{\delta})\) is injective. The cohomology of the spaces \(M_{0,n+1}^{\delta}\) have a pure Hodge structure and that implies that the map into the cohomology is also surjective. By using pullback along the various point-forgetful projections \(\pi_{\iota}:M_{0,n+1}^{\delta}\to M_{0,k+1}^{\delta}\) we obtain an ns DGCA cooperad \(A(M_0^{\delta})\) with pullbacks and augmentations, isomorphic to \(H(M_{0,n+1}^{\delta})\). We present more details on these moduli spaces and cohomology algebras later in the paper.
\begin{proposition}
Integration on the standard associahedra defines a surjective morphism
 \[
 \mathrm{ev}:\mathfrak{Z}_{\scriptstyle{\mathrm{ns}}}(A(M_0^{\delta})) \to \boldsymbol{\zeta}
 \]
onto the algebra of multiple zeta values.
\end{proposition}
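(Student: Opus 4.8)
The plan is to define \(\mathrm{ev}\) from period integrals over the standard cells, check it respects the two families of defining relations, and read off the value set from Brown's thesis. Since each \(M_{0,n+1}^{\delta}\) is smooth affine of dimension \(n-2\), its cohomology — which by Deligne is \(A(M_{0,n+1}^{\delta})\) with zero differential — sits in degrees \(0,\dots,n-2\); hence \(\mathfrak{e}=\mathrm{Tot}^{\oplus}A(M_0^{\delta})\{-1\}\) is concentrated in degrees \(\leq -1\), the truncation \(\tau^{\leq-1}\) is essentially vacuous, \(C(\tau^{\leq -1}\mathfrak{e})\) lives in degrees \(\leq 0\), and the degree-zero part of the latter, before imposing the product map relations, is the free polynomial algebra on the symbols \(I(\alpha)\) with \(\alpha\) of top degree \(n-2\) in some \(A(M_{0,n+1}^{\delta})\). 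A morphism of DGCAs \(\mathfrak{Z}_{\scriptstyle{\mathrm{ns}}}(A(M_0^{\delta}))\to\boldsymbol{\zeta}\) must vanish in nonzero degrees, so \(\mathrm{ev}\) is specified by the degree-zero algebra map determined by
\[
\mathrm{ev}(I(\alpha)):=\int_{K_n}\alpha ,
\]
where \(K_n\subset M_{0,n+1}^{\delta}(\mathbb{C})\) is the closed standard associahedron. Convergence is automatic: \(K_n\) is compact inside \(M_{0,n+1}^{\delta}(\mathbb{C})\) and the logarithmic form \(\alpha\) is regular, hence smooth, there. That each such cell integral is a \(\mathbb{Q}\)-linear combination of MZVs, and conversely that every \(\zeta(k_1,\dots,k_r)\) of weight \(w\) occurs as the cell integral of a form on \(M_{0,w+3}^{\delta}\) (its classical iterated-integral representation), is Brown's thesis \cite{Brown09}; together these give \(\mathrm{Im}(\mathrm{ev})=\boldsymbol{\zeta}\) as soon as \(\mathrm{ev}\) is known to be well defined.

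Well-definedness reduces to two checks, since every degree-zero symbol is automatically a cocycle (nothing lives in degree \(1\)) and a product map relation of total degree zero has both of its factors of top degree. The first check is that
\[
\Big(\int_{K_n}\alpha\Big)\Big(\int_{K_{n'}}\alpha'\Big)=\int_{K_{n+n'-2}}\alpha\,\sha_{\iota,\iota'}\,\alpha'
\]
for top forms \(\alpha,\alpha'\) and order-preserving injections \(\iota,\iota'\). This is Fubini: in the simplicial coordinates \(0<z_2<\dots<z_{m-1}<1\) the standard cell of \(M_{0,m+1}^{\delta}\) is, up to a null set, the simplex in those coordinates; the point-forgetful projections \(\pi_{\sigma\iota},\pi_{\sigma\iota'}\) restrict to coordinate projections; a product of two such simplices is tiled up to null sets by the simplices indexed by the shuffles \(\sigma\in Sh(\iota,\iota')\); and the sign \((-1)^{|\sigma|}\) in \(\sha_{\iota,\iota'}\) is exactly the comparison of the product orientation with the orientation of the \(\sigma\)-tile.

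The second check is that \(\mathrm{ev}(d\,I(\beta))=0\) for \(\beta\in A(M_{0,n+1}^{\delta})\) of codimension one. Unwinding the differential of \(\mathfrak{Z}_{\scriptstyle{\mathrm{ns}}}\), which is built from the cocomposition of the cooperad \(A(M_0^{\delta})\) twisted by \(\varepsilon\): geometrically this cocomposition is restriction of forms to the boundary divisors of \(\overline{M}_{0,n+1}\) lying in \(M_{0,n+1}^{\delta}\), and these are precisely the divisors cutting out the facets \(K_k\times K_l\) (\(k+l=n+1\)) of the associahedron \(K_n\). So \(d\,I(\beta)\) is a signed sum, over the facets of \(K_n\), of the (products of) symbols representing the restrictions of \(\beta\); since \(d\) is a derivation annihilating every top-degree symbol, it is enough to handle a single \(I(\beta)\), and then the Fubini identity above gives
\[
\mathrm{ev}(d\,I(\beta))=\sum_{\mathrm{facets}}\pm\int_{\mathrm{facet}}\beta=\int_{\partial K_n}\beta=\int_{K_n}d\beta=0 ,
\]
the last equality because \(A(M_0^{\delta})\) has trivial differential, so \(\beta\) is closed. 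Surjectivity is then the second half of \cite{Brown09}, and \(\mathrm{ev}\) vanishing in nonzero degrees is forced, so the above data assemble into a surjective morphism of DGCAs.

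The main obstacle is the geometric identification underlying the third paragraph: translating the abstract Chevalley--Eilenberg-type differential coming from \(\mathrm{Def}_{\scriptstyle{\mathrm{ns}}}(\mathsf{As}_{\infty}\xrightarrow{\varepsilon}\mathsf{A}^{*})\) and the \(\varepsilon\)-twist into restriction of logarithmic forms to boundary divisors, confirming that the \emph{nonsymmetric} cooperad structure on \(A(M_0^{\delta})\) selects exactly the planar divisors — i.e. the facets of the associahedron and not all boundary strata of \(\overline{M}_{0,n+1}\) — and reconciling the three separate sign conventions, of the cocomposition, of the Fubini tiling, and of the induced orientation on \(\partial K_n\), so that they agree. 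The remaining ingredients are either formal (the reduction to the two checks, convergence) or quoted directly from \cite{Brown09} (the value set and surjectivity).
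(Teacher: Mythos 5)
Your proposal is correct and follows essentially the same route as the paper: Stokes' theorem handles the differential (the cocomposition being restriction to the associahedron's facets), the shuffle decomposition of a product of cells handles the product map relations, and Kontsevich's iterated-integral representation together with Brown's thesis identifies the image with \(\boldsymbol{\zeta}\). Your Fubini/simplex-tiling argument is the same content as the paper's appeal to the birational embedding \(\pi_{\iota}\times\pi_{\iota'}:M_{0,n+n'-1}\to M_{0,n+1}\times M_{0,n'+1}\) from Brown--Carr--Schneps, just phrased in coordinates.
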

\begin{proof}
Kontsevich's well-known integral representation of multiple zeta values proves that every multiple zeta value arises by an integration as suggested. Brown proved the converse in \cite{Brown09}, i.e., that that the integral of any \(\alpha\in A(M_0^{\delta})^{n-2}\) over \(K_n\) is a linear combination of multiple zeta values. Accordingly, with those two remarkable results taken for granted we must only argue that we have a morphism
 \[
 \mathrm{ev}:\mathfrak{Z}_{\scriptstyle{\mathrm{ns}}}(A(M_0^{\delta})) \to \mathbb{R}.
 \]
The map respects the differential because \(\mathrm{ev}\circ(d+\delta)=0\) is exactly Stokes' relation. (The differential \(d\) is zero, i.e., all forms are closed, and the relations reduce to saying that the sum of the inegrals corresponding to the boundary restrictions of a \(\beta\in A(M_{0,n+1}^{delta})^{n-3}\) is zero.) To see that the product map relations are satisfied one uses the birational embedding
 \[
 \pi_{\iota}\times \pi_{\iota'}:M_{0,n+n'-1}\to M_{0,n+1}\times M_{0,n'+1},
 \]
to transform a product of evaluations to an evaluation, cf.~\cite{Brown09} and \cite{BrownCarrSchneps09} by Brown, Carr and Schneps.
\end{proof}
The algebra \(\boldsymbol{\mathcal{Z}}_{\scriptstyle{\mathrm{ns}}}(A(M_0^{\delta}))\) is a bit unsatisfactory as a model for the algebra of (formal) multiple zeta values. The spaces \(M_{0,n+1}^{\delta}\) carry an action of the dihedral group (preserving the cell \(K_n\)) and the identities between evaluations imposed by this are not present in \(\boldsymbol{\mathcal{Z}}_{\scriptstyle{\mathrm{ns}}}(A(M_0^{\delta}))\). However, there is an obvious way to enforce these relations, defining an algebra \(\boldsymbol{\mathcal{Z}}_{\scriptstyle{\mathrm{dih}}}(A(M_0^{\delta}))\), and one could hope that this smaller algebra is isomorphic to the algebra of (formal) multiple zeta values. A similar idea, which very much inspired the present paper, was proposed in \cite{BrownCarrSchneps09}.
\subsubsection{Second example}
Our second example arises from Kontsevich's proof in \cite{Kontsevich99} of the formality of the operad of little disks. Define
 \[
 C_n(\mathbb{C}):=(\mathbb{C}^n\setminus diagonals)/\mathbb{C}\rtimes\mathbb{R}_{>0}
 \]
to be the space of configurations of points in the plane modulo translations and positive dilations. It has a well-known (real) Fulton-MacPherson compactifcation \(\overline{C}_n(\mathbb{C})\), with the nice properties that, firstly, the compactified space is homotopy equivalent to the uncompactified space and, secondly, the boundary inclusions assemble to define the structure of a DGCA cooperad with pullbacks and augmentations on the cohomologies \(H(\overline{C}_n(\mathbb{C}))\) (for varying \(n\)). At the heart of Kontsevich's proof that the operad of little disks is formal is a certain dg operad denoted \(\mathsf{Graphs}\), quasi-isomorphic to the homology operad of \(\overline{C}(\mathbb{C})\). It can be defined as follows.

Let \(\mathsf{g}(n)\) be the graded commutative algebra generated by the \(n(n-1)/2\) degree \(-1\) variables \(e_{ij}=e_{ji}\) (\(1\leq i< j\leq n\)), and let \(\mathcal{T}\) be the graded commutative algebra generated by \(d\) degree \(0\) variables \(x^a\) (\(1\leq a\leq d\)) and \(d\) degree \(1\) variables \(\eta_b\) (\(1\leq b\leq d\)). There is a morphism of dg vector spaces
 \[
  \mathsf{g}(n) \to \mathsf{End}\langle \mathcal{T}\rangle(n) = \mathrm{Map}(\mathcal{T}^{\otimes n},\mathcal{T}),
 \]
sending a generator \(e_{ij}\) to the polydifferential operator
 \[
  \sum_{a=1}^d\bigl( \frac{\partial}{\partial\eta_a^i}\frac{\partial}{\partial x^a_j}+\frac{\partial}{\partial\eta_a^j}\frac{\partial}{\partial x^a_i} \bigr).
 \]
Here \(\partial/\partial x^a_i\) acts as \(\partial/\partial x^a\) on the \(i\)th factor of \(\mathcal{T}^{\otimes n}\). One can via these maps lift the operad structure on \(\mathsf{End}\langle \mathcal{T} \rangle\) to an operad structure on the collection \(\mathsf{g}=\{\mathsf{g}(n)\}\). For example, \(e_{ij}\circ_i e_{kl}=e_{ik}e_{kl}+e_{il}e_{kl}\). To every monomial \(M\in \mathsf{g}\) we associate a graph \(\Gamma\) with set of vertices \([n]\), no legs, and an edge connecting the vertices \(i\) and \(j\) for every generator \(e_{ij}\) appearing in \(M\). Since the generators \(e_{ij}\) have degree \(-1\) we need to order the set of edges of \(\Gamma\) up to an even permutation in order to be able to recover \(M\) from \(\Gamma\). Moreover, note that the degrees imply \(e_{ij}e_{ij}=0\), so \(\Gamma\) cannot contain a double edge. Thus, elements of \(\mathsf{g}\) can be pictured as linear combinations of certain graphs (with some extra data). Let \(\mathsf{G}(n)\) be the degree-completion of the polynomial ring \(\mathsf{g}(n)\) (pictorically, this means allowing formal sums of graphs) and define
 \[
  \mathrm{Tw}\,\mathsf{G}(n) := \prod_{k\geq 0} \mathsf{G}(k+n)_{\Sigma_k}[-2k].
 \]
Elements of \(\mathrm{Tw}\,\mathsf{G}(n)\) can be pictured as formal sums of graphs with two types of vertices: \(n\) white labelled vertices and some number \(k\geq 0\) of black unlabelled vertices. The operadic structure on \(\mathsf{G}\) defines an operadic structure on \(\mathrm{Tw}\,\mathsf{G}\) by only allowing composition in the inputs corresponding to white vertices. Composition in the unlabelled/symmetrized black inputs is not part of the operadic composition but instead interpreted as defining a right action \(\bullet\) of the dg Lie algebra
 \[
  \mathrm{Def}(\mathsf{Lie}\{-1\}_{\infty}\to \mathsf{G})=\prod_{\ell\geq 0} \mathsf{G}(k)[2-2\ell],
 \]
by operadic derivations, c.f.~\cite{Alm13,DolgushevWillwacher12} for the general theory at work here. Here \(\mathsf{Lie}\{-1\}_{\infty}\to \mathsf{G}\) is the morphism sending the binary bracket to \(e_{12}\in\mathsf{G}(2)\). Regard elements of this deformation complex as formal sums of graphs \(\gamma\) with black unlabelled vertices. The action \(\Gamma \mapsto \Gamma \bullet\gamma\) by a graph \(\gamma\) with \(\ell\) vertices on a graph \(\Gamma\in \mathrm{Tw}\,\mathsf{G}(n)\) with \(k\) black vertices is given by suitably symmetrizing the operad composition \(\circ_1:\mathsf{G}(k+n)\otimes \mathsf{G}(\ell) \to \mathsf{G}(k+\ell-1 + n), \Gamma\otimes\gamma \to \Gamma\circ_1\gamma\) to produce a (sum of) graph(s) with \(n\) white vertices and \(k+\ell-1\) black vertices. Using this action we equip \(\mathrm{Tw}\,\mathsf{G}(n)\) with the edge-insertion differential
 \[
\partial \Gamma =  [\,
\begin{tikzpicture}[baseline=-1.45ex,shorten >=0pt,auto,node distance=0.9cm]
 \node[vi] (b) {};
 \node[ve] (w) [below of=b] {}; 
   \path[every node/.style={font=\sffamily\small}]
    (b) edge (w);  
\end{tikzpicture}
\,, \Gamma]  
+ \Gamma \bullet
 \begin{tikzpicture}[baseline=-1.45ex,shorten >=0pt,auto,node distance=0.9cm]
 \node[vi] (b1) {};
 \node[vi] (b2) [below of=b1] {}; 
   \path[every node/.style={font=\sffamily\small}]
    (b) edge (w);  
\end{tikzpicture}\,.
 \] 
The second term uses the action just defined. The first term uses the usual action of the commutator Lie algebra of the space of unary operations (existing for any dg operad). This differential makes \(\mathrm{Tw}\,\mathsf{G}\) a dg operad. The map \(\mathsf{Lie}\{-1\}\to\mathsf{G}\) gives the algebra \(\mathcal{T}[1]\) a Lie algebra structure, and the operad \(\mathrm{Tw}\,\mathsf{G}\) has the same relationship to the operad of Chevalley-Eilenberg complexes \(C(\mathcal{T}[1],\mathsf{End}\langle \mathcal{T}\rangle)\) as the operad \(\mathsf{G}\) has to \(\mathsf{End}\langle \mathcal{T}\rangle\). Let \(\mathsf{Graphs}(n)\) be the subspace of \(\mathrm{Tw}\,\mathsf{G}(n)\) spanned by formal sums of graphs with (i) no connected component containing only black vertices and (ii) all black vertices at least trivalent. One may check that these subspaces constitute a dg suboperad \(\mathsf{Graphs}\). Define \(\mathsf{coGraphs}\) to be the finitely dual dg cooperad, meaning \(\mathsf{coGraphs}(n)^*=\mathsf{Graphs}(n)\) (so elements of \(\mathsf{coGraphs}\) correspond to \textit{finite} sums of graphs).

For a graph \(\Gamma\) we define the corresponding internal graph, to be denoted \(\Gamma^{\scriptstyle{\mathrm{int}}}\), to be the graph with legs obtained by deleting the white vertices and turning edges previously connected to white vertices into legs. Say that a graph \(\Gamma\) is internally connected if either (i) it consists of a single edge connecting two white vertices or (ii) it has no edge connecting two
white vertices and the associated internal graph \(\Gamma^{\scriptstyle{\mathrm{int}}}\) is connected.

Define \(\mathsf{ICG}'(n)[-1]\subset\mathsf{coGraphs}(n)\) to be the subspace spanned by all graphs that are internally connected. Clearly,
 \[
  \mathsf{coGraphs}(n) = S(\mathsf{ICG}'(n)[-1]),
 \]
since any graph can be regarded as a superposition of internally connected graphs. The differential \(\partial\) is a coderivation of the free graded commutative algebra \(S(\mathsf{ICG}[1])\) and the cooperadic compositions likewise respect the algebra structure. Thus \(\mathsf{coGraphs}\) is a cooperad of dg commutative algebras. For every \(n\geq 2\), Kontsevich wrote down a quasi-isomorphism of DGCAs
 \[
  \vartheta : \mathsf{coGraphs}(n) \to \Omega(\overline{C}_n(\mathbb{C})),\, \Gamma\mapsto \vartheta_{AT}^{\Gamma}
 \]
onto the de Rham algebra of (piecewise semialgebraic) differential forms. The cells \(K_n\) defined by having all \(n\) points in a configuration on a line parallell to the real axis are associahedra and simply adding white vertices (and suitably relabeling them) defines pullback maps \(\pi_j^* :\mathsf{coGraphs}(n)\to \mathsf{coGraphs}(n+1)\), so Kontsevich's construction can be turned into the the following proposition:
\begin{proposition}
Integration of Kontsevich's forms \(\vartheta^{\Gamma}\) over the associahedra \(K_n\) defines a morphism of dg algebras
 \(
  \mathrm{ev}_{\scriptstyle{\vartheta}} : \mathfrak{Z}_{\scriptstyle{\mathrm{ns}}}(\mathsf{coGraphs}) \to \mathbb{R}.
 \)
\end{proposition}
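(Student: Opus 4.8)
The plan is to follow the proof of the previous proposition almost verbatim, with Kontsevich's graph forms $\vartheta^{\Gamma}$ in place of the logarithmic forms on $M_{0,n+1}^{\delta}$. Recall that $\mathfrak{Z}_{\scriptstyle{\mathrm{ns}}}(\mathsf{coGraphs})$ is a quotient of the cofree DGCA $C(\tau^{\leq -1}(\mathrm{Tot}^{\oplus}\mathsf{coGraphs}\{-1\}))$, whose degree-zero generators are the ``top-degree'' graphs $\Gamma\in\mathsf{coGraphs}(n)$, i.e.\ those for which $\vartheta^{\Gamma}$ is a form of top degree $n-2$ on the associahedron $K_n\subset\overline{C}_n(\mathbb{C})$, while the generators in strictly negative degree correspond to graphs whose forms have lower degree on $K_n$. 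I would define $\mathrm{ev}_{\scriptstyle{\vartheta}}$ first on this cofree DGCA: send a top-degree generator $\Gamma\in\mathsf{coGraphs}(n)$ to $\int_{K_n}\vartheta^{\Gamma}\in\mathbb{R}$ --- a convergent integral, since $\vartheta^{\Gamma}$ is a piecewise semialgebraic form on the compact manifold with corners $\overline{C}_n(\mathbb{C})$ and $K_n$ is a compact cell --- send every generator of nonzero degree to $0$, and extend multiplicatively. As $\mathbb{R}$ sits in degree $0$ with zero differential, this is automatically a morphism of graded algebras vanishing in all nonzero degrees, and it sends the unit $u\in\mathsf{coGraphs}(2)$ (the two white vertices with no edge) to $\int_{K_2}\vartheta^{u}=1$. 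To see that it descends to a morphism of dg algebras $\mathfrak{Z}_{\scriptstyle{\mathrm{ns}}}(\mathsf{coGraphs})\to\mathbb{R}$ it then remains to check two things: that $\mathrm{ev}_{\scriptstyle{\vartheta}}$ is a chain map, and that it annihilates the product map relations.

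That $\mathrm{ev}_{\scriptstyle{\vartheta}}$ is a chain map is Stokes' theorem. Since the target has zero differential, it suffices to verify that $\mathrm{ev}_{\scriptstyle{\vartheta}}$ kills the image of the differential $d+\delta$, and the only component landing in degree $0$ issues from a codimension-one generator $\beta\in\mathsf{coGraphs}(n)$, one for which $\vartheta^{\beta}$ is a form of degree $n-3$ on $K_n$. Here $d$ is the edge-insertion differential $\partial$ of $\mathsf{coGraphs}$ extended as a derivation, and $\delta$ is the Chevalley--Eilenberg differential built from the cooperadic cocompositions; unlike in the previous proposition (where all forms were closed, so $d=0$), here $d\neq 0$. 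Because $\vartheta\colon\mathsf{coGraphs}(n)\to\Omega(\overline{C}_n(\mathbb{C}))$ is a morphism of DGCAs it intertwines $\partial$ with the de Rham differential, so $\mathrm{ev}_{\scriptstyle{\vartheta}}(d\beta)=\int_{K_n}\vartheta^{\partial\beta}=\int_{K_n}d_{\mathrm{dR}}\vartheta^{\beta}=\int_{\partial K_n}\vartheta^{\beta}$. The codimension-one faces of $K_n$ are products $K_p\times K_q$ indexed exactly by the binary cocompositions of $\mathsf{coGraphs}$, and along such a face $\vartheta^{\beta}$ restricts to the external product $\vartheta^{\beta'}\boxtimes\vartheta^{\beta''}$ over the terms $\beta'\otimes\beta''$ of the corresponding cocomposition of $\beta$ --- this is the compatibility of Kontsevich's forms with the cooperad structure at the boundary strata of $\overline{C}_n(\mathbb{C})$, together with the vanishing of $\vartheta^{\beta}$ on the remaining strata (where three or more points collide, or where an internal collapse occurs) by Kontsevich's vanishing lemmas. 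By Fubini the boundary integral equals $-\mathrm{ev}_{\scriptstyle{\vartheta}}(\delta\beta)$, so $\mathrm{ev}_{\scriptstyle{\vartheta}}\bigl((d+\delta)\beta\bigr)=0$; this is precisely ``Stokes' relation''.

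That $\mathrm{ev}_{\scriptstyle{\vartheta}}$ annihilates the product map relations is established just as in the previous proposition, transported from moduli space to configuration space. The pullback maps $\pi^*_j$ of $\mathsf{coGraphs}$ (adding and relabelling a white vertex) are induced by the point-forgetful projections $p_j\colon\overline{C}_{m+1}(\mathbb{C})\to\overline{C}_m(\mathbb{C})$, and Kontsevich's forms are compatible with them, $\vartheta^{\pi^*_{\kappa}\Gamma}=p_{\kappa}^{*}\vartheta^{\Gamma}$ for an order-preserving injection $\kappa$, where $p_{\kappa}$ forgets the white points not indexed by $\kappa$. Hence for top-degree graphs $\Gamma\in\mathsf{coGraphs}(n)$, $\Gamma'\in\mathsf{coGraphs}(n')$ and order-preserving injections $\iota\colon[n]\to[N]$, $\iota'\colon[n']\to[N]$ with $N=n+n'-2$,
\[
 \mathrm{ev}_{\scriptstyle{\vartheta}}\bigl(\Gamma\,\sha_{\iota,\iota'}\,\Gamma'\bigr)
 \;=\; \sum_{\sigma\in Sh(\iota,\iota')}(-1)^{\vert\sigma\vert}\int_{K_N} p_{\sigma\iota}^{*}\vartheta^{\Gamma}\wedge p_{\sigma\iota'}^{*}\vartheta^{\Gamma'} ,
\]
and the right-hand side equals $\int_{K_n}\vartheta^{\Gamma}\cdot\int_{K_{n'}}\vartheta^{\Gamma'}=\mathrm{ev}_{\scriptstyle{\vartheta}}(\Gamma)\,\mathrm{ev}_{\scriptstyle{\vartheta}}(\Gamma')$ by the shuffle-product identity for integrals over the associahedron: restricted to real points, the preimage of the product cell $K_n\times K_{n'}$ under the pair of forgetful maps decomposes into the cells $\sigma\cdot K_N$, $\sigma\in Sh(\iota,\iota')$, with orientation signs $(-1)^{\vert\sigma\vert}$, so the formula follows by Fubini and change of variables. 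This is the same argument as the one using the birational embedding $\pi_{\iota}\times\pi_{\iota'}\colon M_{0,N+1}\to M_{0,n+1}\times M_{0,n'+1}$ of \cite{Brown09} and \cite{BrownCarrSchneps09} in the first example, and the associahedra involved are literally the same cells. Since the product map relations sit in degree $0$, annihilating them there suffices, and $\mathrm{ev}_{\scriptstyle{\vartheta}}$ descends to the asserted morphism.

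The main obstacle is the sign and orientation bookkeeping in the Stokes step: one must check that the Koszul signs carried by the edge-insertion differential $\partial$ and by the cooperadic Chevalley--Eilenberg differential $\delta$ --- which come from the operadic (de)suspensions $\{-1\}$ and $[-1]$ --- match the boundary-orientation signs of the codimension-one faces of $K_n$ together with the reordering signs produced by Fubini; equivalently, that the cellular boundary map of the associahedron agrees with $d+\delta$ up to the overall sign that forces $\mathrm{ev}_{\scriptstyle{\vartheta}}(d\beta)=-\mathrm{ev}_{\scriptstyle{\vartheta}}(\delta\beta)$. A secondary technical point is to confirm that exactly the strata of $\overline{C}_n(\mathbb{C})$ on which Kontsevich's vanishing lemmas apply are those not accounted for by the product decomposition of $\partial K_n$, so that no spurious boundary contributions appear.
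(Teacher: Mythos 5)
Your proof follows exactly the route the paper intends: the paper gives no details here, remarking only that ``the proof is again little more than an application of Stokes' theorem'' in parallel with the preceding proposition on \(A(M_0^{\delta})\), and your write-up supplies precisely that argument (Stokes plus the cooperadic boundary restrictions of \(\vartheta^{\Gamma}\) for the chain-map property, and the forgetful-map shuffle decomposition of the associahedral cells for the product map relations). The points you flag as remaining --- orientation and sign bookkeeping, and the role of Kontsevich's vanishing lemmas on the boundary strata introduced by the fiber integrals over black vertices --- are exactly the standard technicalities the paper elides.
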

The proof is again little more than an application of Stokes' theorem. Kontsevich remarked already in the paper \cite{Kontsevich99}, where he introduced the numbers \(\mathrm{ev}_{\scriptstyle{\vartheta}}(\Gamma)\) (\(\Gamma\) a graph), that they seemed to be closely related to multiple zeta values.

It is a classical result due to Vladimir Arnol'd that the cohomology \(H(C_n(\mathbb{C}))\) is isomorphic as an algebra to the graded commutative algebra \(\mathsf{coGer}(n)\) freely generated by degree \(1\) elements \(\omega_{ij}=\omega_{ji}\) (\(1\leq i\neq j\leq n\)) modulo the relations \(\omega_{ij}\omega_{jk}+\omega_{jk}\omega_{ki}+\omega_{ki}\omega_{ji}=0\).\cite{Arnold69} On \(\mathbb{C}^n\setminus diagonals\) these generators can be taken as the honest differential forms \(\omega_{ij}=d\/\log(z_j-z_i)\). Kontsevich proved that the morphism \(\mathsf{coGraphs} \to \mathsf{coGer}\) which sends a graph with one or more black vertices to zero and an edge \(e_{ij}\) between white vertices to \(\omega_{ij}\) is a quasi-isomorphism of cooperads of dg commutative algebras. (The form \(\vartheta^{e_{ij}}\) represents the class \(\omega_{ij}\).) The operad dual to \(\mathsf{coGer}\) is the operad \(\mathsf{Ger}\) of Gerstenhaber algebras. A Gerstenhaber algebra is a dg vector space \(F\) with a product \(\mu\) that makes \((F,\mu)\) is a dg commutative algebra, a bracket \([\,,\,]\) that makes \((F[1],[\,,\,])\) a dg Lie algebra and, moreover, the two operations must be compatible in the sense that the adjoint action of the Lie bracket acts by derivations og the commutative product. The morphism mentioned above is dual to a quasiisomorphism \(\mathsf{Ger}\to\mathsf{Graphs}\) that sends the commutative product to the graph \( 
\begin{tikzpicture}[baseline=-0.5ex,shorten >=0pt,auto,node distance=0.9cm]
 \node[ve] (w1) {};
 \node[ve] (w2) [right of=w1] {};   
\end{tikzpicture} \,
\) and the Lie bracket to the graph \(
\begin{tikzpicture}[baseline=-0.5ex,shorten >=0pt,auto,node distance=0.9cm]
 \node[ve] (w1) {};
 \node[ve] (w2) [right of=w1] {};  
 \path[every node/.style={font=\sffamily\small}]
    (w1) edge (w2); 
\end{tikzpicture} \,
\). Note however that the morphism
 \[
  \nu:\mathbb{R}\otimes\mathsf{Ass}_{\infty}\to\mathbb{R}\otimes\mathsf{Graphs}
 \]
that corresponds to \(\mathrm{ev}:\mathfrak{Z}_{\scriptstyle{\mathrm{ns}}}(\mathsf{coGraphs}) \to \mathbb{R}\) is \textit{not} equal to the canonical composite
 \[
  \mathsf{Ass}_{\infty} \to \mathsf{Com}\to\mathsf{Ger}\to\mathsf{Graphs}.
 \]
The binary product is the same, \(\nu_2=
\begin{tikzpicture}[baseline=-0.5ex,shorten >=0pt,auto,node distance=0.9cm]
 \node[ve] (w1) {};
 \node[ve] (w2) [right of=w1] {};   
\end{tikzpicture} \,,
\) but there are also higher terms involved, the simplest of which is \(\frac{1}{24}\begin{tikzpicture}[baseline=-1.45ex,shorten >=0pt,auto,node distance=0.9cm]
 \node[vi] (b) {};
 \node[ve] (w1) [below left of=b] {};
 \node[ve] (w2) [below of=b] {}; 
 \node[ve] (w3) [below right of=b] {}; 
   \path[every node/.style={font=\sffamily\small}]
    (b) edge (w1)
    (b) edge (w2)  
    (b) edge (w3);  
\end{tikzpicture}\,\) (a term in \(\nu_3\)). It is straight-forward to check by hand that \(\begin{tikzpicture}[baseline=-1.45ex,shorten >=0pt,auto,node distance=0.9cm]
 \node[vi] (b) {};
 \node[ve] (w1) [below left of=b] {};
 \node[ve] (w2) [below of=b] {}; 
 \node[ve] (w3) [below right of=b] {}; 
   \path[every node/.style={font=\sffamily\small}]
    (b) edge (w1)
    (b) edge (w2)  
    (b) edge (w3);  
\end{tikzpicture}\,\) represents a nontrivial degree one cohomology class in the deformation complex of the canonical \(\mathsf{Ass}_{\infty}\to\mathsf{Graphs}\). In slightly more detail, \(\mathrm{Def}(\mathsf{Ass}_{\infty}\to\mathsf{Graphs})\) has two differentials: the differential given by the internal differential \(\partial\) on \(\mathsf{Graphs}\) and a term \(d_H=[\,\begin{tikzpicture}[baseline=-0.5ex,shorten >=0pt,auto,node distance=0.9cm]
 \node[ve] (w1) {};
 \node[ve] (w2) [right of=w1] {};   
\end{tikzpicture} \,,\,]\). The cocycle \(\begin{tikzpicture}[baseline=-1.45ex,shorten >=0pt,auto,node distance=0.9cm]
 \node[vi] (b) {};
 \node[ve] (w1) [below left of=b] {};
 \node[ve] (w2) [below of=b] {}; 
 \node[ve] (w3) [below right of=b] {}; 
   \path[every node/.style={font=\sffamily\small}]
    (b) edge (w1)
    (b) edge (w2)  
    (b) edge (w3);  
\end{tikzpicture}\,\) is cohomologous to the cocycle \(\,\begin{tikzpicture}[baseline=-0.25ex,shorten >=0pt,auto,node distance=0.8cm]
 \node[ve] (w1) {};
 \node[ve] (w2) [right of=w1] {};
 \node[ve] (w3) [right of=w2] {};
 \node[ve] (w4) [right of=w3] {};  
   \path[every node/.style={font=\sffamily\small}]
    (w1) edge[out=75, in=105] (w3)
    (w2) edge[out=75, in=105] (w4); 
\end{tikzpicture}\,\), which is \(d_H\)-exact but cannot possibly be \(\partial\)-exact (since \(\partial\) adds black vertices). Thus the representation \(\nu\) is truly exotic, in the sense that it is a homotopy nontrivial deformation of the standard representation. See \cite{Alm13} for a lengthy discussion of this structure. The quasiisomorphism \(\mathsf{Ger}\to\mathsf{Graphs}\) implies that every Gerstenhaber algebra should have an exotic \(A_{\infty}\) structure homotopical to \(\nu\). The Gerstenhaber structure on the polyvector fields on some affine (graded) space with a chosen Poisson structure factors through a representation of \(\mathsf{Graphs}\), so for all such Gerstenhaber algebras the exotic structure can be written down explicitly. (This class of Gerstenhaber algebras includes Chevalley-Eilenberg cochain complexes \(C(\mathfrak{g},S(\mathfrak{g}))\) of symmetric algebras on a finite-dimensional Lie algebra \(\mathfrak{g}\), since they can be regarded as complexes of polyvector fields on the affine Poisson manifold \(\mathfrak{g}^*\).) Explicitly extending the exotic \(A_{\infty}\) structure to the class of all Gerstenhaber algebras would be solved by finding a morphism from \(\mathfrak{Z}_{\scriptstyle{\mathrm{ns}}}(\mathsf{coGer})\) to \(\mathfrak{Z}_{\scriptstyle{\mathrm{ns}}}(\mathsf{coGraphs})\).
\section{The Grothendieck-Teichm\"uller Lie algebra and formal weights}
\subsection{Preliminaries from the literature}
Recall the Lie algebra \(\mathfrak{t}_n\) of infinitesimal braids on \(n\) strands. It has generators \(t_{ij}=t_{ji}\), \(1\leq i\neq j\leq n\), and relations
 \[
  [t_{ij}, t_{kl}]=0=[t_{ij}+t_{jk},t_{ik}]
 \]
if \(\{i,j\}\cap \{k,l\}=\emptyset\). It has a length-grading \(\bigoplus_{d\geq 0} \mathfrak{t}_n^{(d)}\) by the number of brackets, and we let 
 \[
 \mathfrak{t}'_n := \bigoplus_{d\geq 0} (\mathfrak{t}_n^{(d)})^* 
 \]
be the graded dual. It is naturally a Lie coalgebra, and its linear dual \((\mathfrak{t}'_n)^*\) is the length-completion \(\hat{\mathfrak{t}}_n\). The collection \(\mathfrak{t}=\{\mathfrak{t}_n\}_{n\geq 2}\) forms an operad in the category of Lie algebras (with direct sum as tensor product), in a way that we can describe as follows, paraphrasing Dimitry Tamarkin's \cite{Tamarkin02}.
\begin{lemma}
Given any function \(f:[k]\to [n]\), the mapping
 \[
  t_{ij} \mapsto \sum_{f\{a,b\}=\{i,j\}} t_{ab}
 \]
on generators defines a morphism \(\pi_{f} : \mathfrak{t}_n \to \mathfrak{t}_k\) of Lie algebras. Analogously, if \(g:[n]\to [m]\) is an injection, then \(t_{ij}\mapsto t_{g(i)g(j)}\) is a morphism of Lie algebras \(\kappa_g: \mathfrak{t}_n \to \mathfrak{t}_m\).
\end{lemma}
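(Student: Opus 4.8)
The plan is to exploit the presentation of $\mathfrak{t}_n$ by the generators $t_{ij}=t_{ji}$ ($i\neq j$) subject to the two families of relations $[t_{ij},t_{kl}]=0$ for $\{i,j\}\cap\{k,l\}=\emptyset$, and $[t_{ij}+t_{jk},t_{ik}]=0$ for pairwise distinct $i,j,k$. By the universal property of a Lie algebra presented by generators and relations, it is enough in each case to check that the proposed images of the generators again satisfy these relations in the target; the homomorphism then exists, is unique, and is $\mathbb{Q}$-linear. Both prescriptions are manifestly compatible with the symmetry $t_{ij}=t_{ji}$.

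The morphism $\kappa_g$ is the easy case. Since $g$ is injective it sends disjoint pairs in $[n]$ to disjoint pairs in $[m]$ and pairwise distinct triples to pairwise distinct triples, so each defining relation of $\mathfrak{t}_n$ is carried to an instance of the corresponding defining relation of $\mathfrak{t}_m$. (Injectivity is essential here, which is exactly why a general function has to be treated via $\pi_f$ instead.)

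For $\pi_f$, abbreviate $T_{ij}:=\sum_{\{f(a),f(b)\}=\{i,j\}}t_{ab}$, with the convention that $T_{ij}=0$ when $i$ or $j$ is not in the image of $f$. The disjointness relation transports readily: in the bilinear expansion of $[T_{ij},T_{kl}]$ every bracket $[t_{ab},t_{cd}]$ has $f(a),f(b)\in\{i,j\}$ and $f(c),f(d)\in\{k,l\}$, so if $\{i,j\}\cap\{k,l\}=\emptyset$ then $\{a,b\}\cap\{c,d\}=\emptyset$ and every such bracket vanishes in $\mathfrak{t}_k$. The substantive point is $[T_{ij}+T_{jk},T_{ik}]=0$ for pairwise distinct $i,j,k$. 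I would expand bilinearly and fix a term $t_{cd}$ of $T_{ik}$ with $f(c)=i$ and $f(d)=k$. In $[T_{ij},t_{cd}]$ only pairs $\{a,b\}$ meeting $\{c,d\}$ survive (again by disjointness in $\mathfrak{t}_k$); since $k\notin\{i,j\}$ the shared vertex cannot be $d$, so it is $c$ and the remaining vertex $b$ has $f(b)=j$, whence $[T_{ij},t_{cd}]=\sum_{f(b)=j}[t_{cb},t_{cd}]$. Symmetrically, because $i\notin\{j,k\}$ forces the shared vertex to be $d$, one gets $[T_{jk},t_{cd}]=\sum_{f(b)=j}[t_{db},t_{cd}]$. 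Adding these and summing over the terms $t_{cd}$ of $T_{ik}$ collapses the whole bracket to a sum of expressions $[t_{cb}+t_{db},t_{cd}]$, each of which vanishes by the second relation in $\mathfrak{t}_k$ applied to the triple $\{b,c,d\}$ --- genuinely a triple of distinct indices, since $f(b)=j$ differs from $f(c)=i$ and from $f(d)=k$.

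The one real obstacle is this final collapse: one must first invoke the disjointness relation to discard all the ``cross'' brackets and cut the double sum down to the vertices shared with $\{c,d\}$, and then recognise what remains as a single instance of the $4T$-type relation. Everything else is formal bookkeeping on generators --- both the well-definedness of $\pi_f$ and $\kappa_g$, and (although the lemma does not ask for it) the compatibility of these maps with composition of functions that upgrades $\mathfrak{t}=\{\mathfrak{t}_n\}$ to an operad in Lie algebras.
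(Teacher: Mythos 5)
Your proof is correct. The paper itself gives no argument for this lemma (it is stated as a paraphrase of Tamarkin's \cite{Tamarkin02} and left unproved), so there is nothing to compare against; your verification is exactly the standard one, and the key step --- using the disjointness relation in $\mathfrak{t}_k$ to reduce $[T_{ij}+T_{jk},T_{ik}]$ to a sum of brackets $[t_{cb}+t_{db},t_{cd}]$ over triples $b,c,d$ with $f(b)=j$, $f(c)=i$, $f(d)=k$, each killed by the second defining relation --- is carried out accurately, including the observation that the shared vertex is forced by the distinctness of $i,j,k$.
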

Now, assume \(n,n'\geq 2\) and \(1\leq i\leq n\). Define \(f:[n+n'-1]\to [n]\) to be the function defined by \(f(s)=s\) for \(s<i\), \(f(s)=i\) for \(i\leq s\leq i+n'-1\) and \(f(s)=s-n'+1\) if \(i+n'\leq s\). Define \(g:[n']\to [n+n'-1]\) to be the function \(g(\ell)=i+\ell-1\).
\begin{proposition}\cite{Tamarkin02}
The morphisms \(\circ_i := \pi_f\oplus \kappa_g : \mathfrak{t}_n\oplus\mathfrak{t}_{n'} \to \mathfrak{t}_{n+n'-1}\) define an operad \(\mathfrak{t}\) in the category of Lie algebras.
\end{proposition}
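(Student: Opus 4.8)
The plan is to check three things in turn: first, that each composition map $\circ_i=\pi_f\oplus\kappa_g$ is a morphism in the category of Lie algebras; second, that the $\circ_i$ are equivariant for the symmetric-group actions; third, that they satisfy the nested and the parallel associativity axioms of an operad. I would do the first step first because, once the $\circ_i$ are known to be Lie algebra morphisms, every instance of equivariance and associativity becomes an identity between morphisms out of a direct sum $\mathfrak{t}_{n_1}\oplus\cdots\oplus\mathfrak{t}_{n_r}$ and may be checked on the generators $t_{k\ell}$ of each summand. On generators, both sides of any such identity are, by construction, assembled from the maps $\pi_{(-)}$ and $\kappa_{(-)}$ of the preceding lemma; and, as is immediate from their formulas on generators, these are functorial in the index maps — $\pi$ contravariantly, $\kappa$ covariantly — and satisfy the base-change rule rewriting $\pi_f\circ\kappa_g$ as $\kappa_{g'}\circ\pi_{f'}$, with $g'$ the inclusion of the subset $f^{-1}(g([n']))$ and $f'$ the induced function to $[n']$. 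So every composite normalizes to a single assignment $t_{k\ell}\mapsto\sum t_{g'(a)g'(b)}$ for an explicit index function determined by composing the insertion data, and equivariance and associativity reduce to equalities of such index functions and injections. These are the routine bookkeeping identities of nested versus sequential substitution of ordered finite blocks — the same combinatorics that makes the associative operad $\mathsf{As}$ an operad, decorated with the relabellings that implement the $\Sigma_n$-action — and I would not dwell on them.

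So the substance is the first step, and there the single genuinely nontrivial point is the vanishing of the cross-bracket. Since $\mathfrak{t}_n\oplus\mathfrak{t}_{n'}$ carries the direct-sum bracket, in which $[\mathfrak{t}_n,\mathfrak{t}_{n'}]=0$, the assignment $(x,y)\mapsto\pi_f(x)+\kappa_g(y)$ is a Lie algebra morphism if and only if $[\pi_f(x),\kappa_g(y)]=0$ in $\mathfrak{t}_{n+n'-1}$ for all $x,y$, the morphism property of $\pi_f$ and of $\kappa_g$ separately being supplied by the lemma. To check this on generators, write $B=\{i,i+1,\dots,i+n'-1\}\subset[n+n'-1]$ for the block of strands created by the $i$th input slot. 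By definition of $g$, the subalgebra $\kappa_g(\mathfrak{t}_{n'})$ is spanned by the $t_{ab}$ with $a,b\in B$; by definition of $f$, which collapses $B$ to the single index $i$ and is a bijection on the complement, every term of $\pi_f(t_{k\ell})$ is a $t_{ab}$ with \emph{at most one} index in $B$ — concretely $\pi_f(t_{k\ell})=t_{pq}$ with $p,q\notin B$ when $k,\ell\neq i$, and $\pi_f(t_{i\ell})=\sum_{a\in B}t_{aq}$ with $q\notin B$. In the first case $[t_{pq},t_{cd}]=0$ for $c,d\in B$ since $\{p,q\}$ and $\{c,d\}$ are disjoint, by the relation $[t_{ij},t_{kl}]=0$. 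In the second case, for $c,d\in B$ the summands with $a\neq c,d$ contribute nothing by the same relation, so $[\sum_{a\in B}t_{aq},t_{cd}]=[t_{cq}+t_{dq},t_{cd}]=[t_{cq}+t_{qd},t_{cd}]$, which vanishes by the relation $[t_{ij}+t_{jk},t_{ik}]=0$ applied with $(i,j,k)$ the distinct triple $(c,q,d)$. Hence $[\pi_f(x),\kappa_g(y)]=0$ and each $\circ_i$ is a morphism of Lie algebras.

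With that in hand I would finish by carrying out the reductions of the first paragraph: for equivariance, the $\Sigma_n$-action is $\alpha\mapsto\kappa_\sigma(\alpha)$ for the relevant bijection $\sigma$, so the equivariance axioms are identities among composites of $\pi$'s and $\kappa$'s that reduce to equalities of relabelled index functions; for associativity, the nested and parallel axioms each assert equality of two composites of the shape $\pi_{(-)}\oplus\kappa_{(-)}\oplus\kappa_{(-)}$, which the functoriality and base-change rules reduce to the two standard ways of substituting one ordered block into another. (The operadic unit is degenerate: one takes $\mathfrak{t}_1=0$, the monoidal unit of $(\mathsf{Lie},\oplus)$, so the unit axioms are vacuous.) I expect the main obstacle to be the cross-bracket vanishing $[\pi_f(x),\kappa_g(y)]=0$: it is the one place where the precise form of the infinitesimal braid relations — rather than mere bookkeeping of index functions — is used, and it is exactly what forces the direct sum of the two lemma-morphisms to land in the category of Lie algebras and not merely in graded vector spaces. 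Everything past that point is automatic.
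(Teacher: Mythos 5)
The paper does not prove this proposition; it is quoted from Tamarkin's paper with only a citation, so there is no in-text argument to compare against. Judged on its own, your proof is correct and identifies the right crux. The one genuinely non-formal point is exactly the one you isolate: since $\mathfrak{t}_n\oplus\mathfrak{t}_{n'}$ carries the direct-sum bracket, $\pi_f\oplus\kappa_g$ is a Lie morphism precisely when $[\pi_f(x),\kappa_g(y)]=0$, and your generator computation --- $[t_{pq},t_{cd}]=0$ when $\{p,q\}$ misses the block $B$, and $[\sum_{a\in B}t_{aq},t_{cd}]=[t_{cq}+t_{qd},t_{cd}]=0$ by the second infinitesimal braid relation --- is exactly right. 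Two small remarks: the image of $\kappa_g$ is \emph{generated}, not spanned, by the $t_{ab}$ with $a,b\in B$, so reducing to generators tacitly uses that the centralizer of a set is a Lie subalgebra (standard, but worth a clause); and the functoriality/base-change identities for $\pi$ and $\kappa$ that you invoke for equivariance and associativity do hold on generators and reduce those axioms to the usual block-substitution bookkeeping, so deferring them is reasonable.
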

It is transparent from the above definition of the operadic compositions that one can formally allow \(n'=0\) and then deduce that if \(f:[n-1]\to [n]\) is the injection that misses \(i\in [n]\), the morphism \(\pi_f: \mathfrak{t}_n \to \mathfrak{t}_{n-1}\) is compatible with operad composition and can be regarded as an insertion of constants into the \(i\)th input. Dualizing, we obtain the following statement:
\begin{corollary}
Let \(\iota:[k]\to [n]\) be any injection. The maps \(\pi^*_{\iota}\) dual to the \(\pi_{\iota}\)'s give \(C(\mathfrak{t}')\) the structure of a DGCA ns cooperad with pullbacks and augmentations.
\end{corollary}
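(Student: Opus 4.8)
The plan is to obtain $C(\mathfrak{t}')$ by dualizing the operad $\mathfrak{t}$ arity by arity and then transporting the resulting co-structure through the Chevalley--Eilenberg functor $C$. The first thing I would record is that every structure map of $\mathfrak{t}$ occurring above --- the compositions $\circ_i=\pi_f\oplus\kappa_g$ and, in the degenerate case $n'=0$, the one-point maps $\pi_f:\mathfrak{t}_n\to\mathfrak{t}_{n-1}$ --- is a morphism of \emph{length-graded} Lie algebras, because $\pi_f$ and $\kappa_g$ carry the length-one generators $t_{ij}$ to length-one elements. Since each $\mathfrak{t}_n^{(d)}$ is finite dimensional, graded duality is exact, sends $\oplus$ to $\oplus$, and turns a graded Lie algebra map into a map of graded Lie coalgebras; so dualizing the operad axioms of the Proposition above makes $\mathfrak{t}'=\{\mathfrak{t}'_n\}$ into a nonsymmetric cooperad in the category of Lie coalgebras with $\oplus$ as monoidal product, with infinitesimal cocompositions $\circ_i^{\vee}:\mathfrak{t}'_{n+n'-1}\to\mathfrak{t}'_n\oplus\mathfrak{t}'_{n'}$; I set $\mathfrak{t}'_0:=0$ (so $\mathfrak{t}'_1=0$ as well).

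Next I would apply $C$ arity by arity. The one genuinely structural point is that $C$ is strong symmetric comonoidal, from Lie coalgebras with $\oplus$ to DGCAs with $\otimes$: the underlying algebra isomorphism is $S\bigl((\mathfrak{a}\oplus\mathfrak{b})[-1]\bigr)\cong S(\mathfrak{a}[-1])\otimes S(\mathfrak{b}[-1])$, and one checks that both the internal differential and the Chevalley--Eilenberg (cobracket) differential respect it, the latter because the Lie cobracket of $\mathfrak{a}\oplus\mathfrak{b}$ has no cross terms; hence $C(\mathfrak{a}\oplus\mathfrak{b})\cong C(\mathfrak{a})\otimes C(\mathfrak{b})$ naturally in $\mathsf{DGCA}$. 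Post-composing the maps $C(\circ_i^{\vee})$ with these isomorphisms equips $C(\mathfrak{t}')$ with cocompositions $C(\mathfrak{t}'_{n+n'-1})\to C(\mathfrak{t}'_n)\otimes C(\mathfrak{t}'_{n'})$ obeying the cooperad axioms (the formal duals of those for $\mathfrak{t}$); conilpotence is automatic since $C(\mathfrak{t}'_1)=C(0)=\mathbb{Q}$, so only finitely decorated trees contribute. The canonical augmentation $C(\mathfrak{t}'_n)=S(\mathfrak{t}'_n[-1])\to\mathbb{Q}$ killing $\mathfrak{t}'_n[-1]$ is a morphism of DGCAs and is natural, hence compatible with the cocompositions; its arity-two instance $C(\mathfrak{t}'_2)\to\mathbb{Q}$ is the augmentation demanded of an ns DGCA cooperad, and the arity-zero term is $C(0)=\mathbb{Q}$.

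It then remains to produce the pullback maps and check their axioms. For an injection $\iota:[k]\to[n]$ the map $\pi_\iota:\mathfrak{t}_n\to\mathfrak{t}_k$ of the Lemma above restricts to the strands indexed by $\iota([k])$ and annihilates the remaining generators; from this description $\pi_{\iota\circ\iota'}=\pi_{\iota'}\circ\pi_\iota$ is immediate, and the remark after the Proposition above already says that the one-point (order-preserving) instances are compatible with operadic composition. I would then set $\pi^{*}_\iota:=C(\pi_\iota^{\vee}):C(\mathfrak{t}'_k)\to C(\mathfrak{t}'_n)$. Functoriality of graded duality and of $C$ gives $\pi^{*}_{\iota\circ\iota'}=\pi^{*}_\iota\circ\pi^{*}_{\iota'}$; dualizing the operad-composition compatibility of the $\pi_\iota$ and applying the comonoidal functor $C$ gives the compatibility of the $\pi^{*}_\iota$ with the cocompositions of $C(\mathfrak{t}')$; and naturality of the canonical augmentation gives compatibility with the augmentations. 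Combined with the preceding paragraph, this is precisely the statement that $C(\mathfrak{t}')$, together with the $\pi^{*}_\iota$, is a DGCA ns cooperad with pullbacks and augmentations.

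I expect the only real work to be the comonoidality of $C$ --- i.e. verifying that the Chevalley--Eilenberg differential of a direct sum of Lie coalgebras is the tensor-product differential of the associated DGCAs --- together with the routine bookkeeping needed to keep graded duality (rather than full linear duality) consistently in force; everything else in the statement is the formal dual of the operad axioms for $\mathfrak{t}$ packaged with the functoriality and naturality of $C$.
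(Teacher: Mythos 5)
Your proposal is correct and follows exactly the route the paper intends: the paper gives no explicit proof, simply asserting that the corollary follows by dualizing the operad structure on \(\mathfrak{t}\) and the maps \(\pi_f\) from the preceding lemma, proposition and remark. Your write-up is a faithful (and more careful) elaboration of that same dualization, with the comonoidality \(C(\mathfrak{a}\oplus\mathfrak{b})\cong C(\mathfrak{a})\otimes C(\mathfrak{b})\) and the use of graded (rather than full linear) duals being precisely the points worth making explicit.
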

\begin{remark}
The cohomology of \(C(\mathfrak{t}')\) is the Gerstenhaber cooperad \(\mathsf{coGer}\), via the identification that takes \(t^*_{ij}\) to \(\omega_{ij}\) and words of length more than one to zero. Equivalently put, the morphism \(\mathsf{Ger}\to C(\hat{\mathfrak{t}})\), \(C(\hat{\mathfrak{t}}):=C(\mathfrak{t}')^*\), that sends the product to \(1\in C(\hat{\mathfrak{t}}_2)\) and the bracket to \(t_{12}\in C(\hat{\mathfrak{t}}_2)\) is a quasiisomorphism.\cite{Kontsevich99} Thomas Willwacher and Pavol \v{S}evera proved the following more detailed form of this quasi-isomorphism in \cite{SeveraWillwacher11}. Let \(\mathsf{ICG}:=(\mathsf{ICG}')^*\). The differential on the dg coalgebra \(\mathsf{Graphs}=\hat{S}(\mathsf{ICG}[1])\) is an \(L_{\infty}\) structure on \(\mathsf{ICG}\). Define the truncation \(\mathsf{TCG}\subset\mathsf{ICG}\) as follows. \(\mathsf{TCG}^{\leq-1}:= \mathsf{ICG}^{\leq -1}\), \(\mathsf{TCG}^{\geq 1}:=0\) and \(\mathsf{TCG}^0\) is the degree \(0\) cocycles of \(\mathsf{ICG}\).  Then \(H(\mathsf{ICG}(n))^0=\hat{\mathfrak{t}}_n\), with \(t_{ij}\) given as the class of \(e_{ij}\), and projection onto degree \(0\) cohomology classes, respectively the inclusion, is a zig-zag of quasi-isomorphisms of \(L_{\infty}\) algebras \(\hat{\mathfrak{t}}_n\leftarrow \mathsf{TCG}(n)\rightarrow\mathsf{ICG}(n)\), \(n\geq 2\).
\end{remark}
The subspace \(\mathrm{Tot}\,\hat{\mathfrak{t}}\{1\}[1]\) of the deformation complex \(\mathrm{Def}_{\scriptstyle{\mathrm{ns}}}(\mathsf{As}_{\infty}\xrightarrow{\varepsilon} C(\hat{\mathfrak{t}}))\) is closed under the differential and hence forms a subcomplex \((\mathrm{Tot}\,\hat{\mathfrak{t}}\{1\}[1],\partial_{\varepsilon})\). Note that \(H^1(\mathrm{Tot}\,\hat{\mathfrak{t}}\{1\}[1],\partial_{\varepsilon})\) for degree reasons consists of series \(\psi\in \hat{\mathfrak{t}}_3\) satisfying a cocycle condition. (There are no exact cocycles.) The Lie algebra \(\mathfrak{t}_3\) is isomorphic to a sum \(\mathbb{Q}z\oplus \mathfrak{lie}(x,y)\) of a free Lie algebra on two generators and a central element spanning a one-dimensional Abelian Lie algebra. Using this one may realize cocycles as series \(\psi\in\widehat{\mathfrak{lie}}(x,y)\) satisfying the so-called pentagon equation
 \[
 \begin{split}
  0=\psi(t_{12},t_{23})&-\psi(t_{13}+t_{23},t_{34})+\psi(t_{12}+t_{13},t_{24}+t_{34})\\
  &-\psi(t_{12},t_{23}+t_{24})+\psi(t_{23},t_{34})\in \hat{\mathfrak{t}}_4.
 \end{split}
 \]
\begin{theorem}\cite{Furusho10,Tamarkin02,Willwacher10}\label{willwacherfurusho}
The inclusion of the complex \((\mathrm{Tot}\,\hat{\mathfrak{t}}\{1\}[1],\partial_{\varepsilon})\) into \(\mathrm{Def}_{\scriptstyle{\mathrm{ns}}}(\mathsf{As}_{\infty}\xrightarrow{\varepsilon} C(\hat{\mathfrak{t}}))\)
is a quasi-isomorphism. The degree one cohomology is
 \[
  H^1(\mathrm{Tot}\,\hat{\mathfrak{t}}\{1\}[1],\partial_{\varepsilon})=\mathfrak{grt}_1\oplus \mathbb{Q}\cdot [x,y].
 \]
In other words, if \(\psi\in\widehat{\mathfrak{lie}}(x,y)\) satisfies the pentagon equation and does not contain the bracket \([x,y]\), then \(\psi\) lies in the Grothendieck-Teichm\"uller Lie algebra \(\mathfrak{grt}_1\). The degree zero cohomology is one-dimensional, \(H^0(\mathrm{Tot}\hat{\mathfrak{t}}\{1\}[1],\partial_{\varepsilon})=\hat{\mathfrak{t}}_2\), and the cohomology in strictly negative degrees vanishes.
\end{theorem}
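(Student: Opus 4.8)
The plan is to make the deformation complex fully explicit, recognize the subcomplex $\mathrm{Tot}\,\hat{\mathfrak{t}}\{1\}[1]$ as its ``Lie--linear part'', reduce the acyclicity of the complement (in the relevant range) to the graph--complex vanishing theorems of Tamarkin and Willwacher, and read off the shape of $H^1$ from Furusho's work on the pentagon equation.

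First I would unwind $\mathrm{Def}_{\mathrm{ns}}(\mathsf{As}_\infty \xrightarrow{\varepsilon} C(\hat{\mathfrak{t}}))$. Since $\mathsf{coAs}(n)=\mathbb{Q}$ for all $n$, its underlying graded space is $\mathrm{Tot}\,C(\hat{\mathfrak{t}})\{1\}=\prod_n C(\hat{\mathfrak{t}}_n)$ with the operadic shifts, and the twisted differential is $\partial_\varepsilon=\partial_{\mathrm{CE}}+[\varepsilon,\,\cdot\,]$. Here $\partial_{\mathrm{CE}}$ collects the internal Chevalley--Eilenberg differentials of the $C(\hat{\mathfrak{t}}_n)$; it fixes the arity and lowers the Chevalley--Eilenberg word-length (the number of $t$-factors) by one. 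The term $[\varepsilon,\,\cdot\,]$ is the convolution bracket with the Maurer--Cartan element $\varepsilon=1\in C(\hat{\mathfrak{t}}_2)$; it raises the arity by one and, since $\varepsilon$ has word-length zero and the co-insertions of $C(\hat{\mathfrak{t}})$ are obtained by applying $S$ to the Lie maps $\pi_f,\kappa_g$ on generators, it preserves the word-length. The subcomplex $\mathrm{Tot}\,\hat{\mathfrak{t}}\{1\}[1]$ is exactly the word-length-one part $\prod_n \hat{\mathfrak{t}}_n$: on it $\partial_{\mathrm{CE}}$ vanishes (a Chevalley--Eilenberg differential kills $1$-chains), so the restricted differential is $[\varepsilon,\,\cdot\,]$; the complex sits in non-negative degrees, whence $H^{<0}=0$ and $H^0$ is the space of arity-two cocycles, which is $\hat{\mathfrak{t}}_2$; and for degree reasons a degree-one cocycle is a series $\psi\in\hat{\mathfrak{t}}_3$ with $[\varepsilon,\psi]=0$, which unravels to the pentagon equation as already recorded above.

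Next, for the quasi-isomorphism I would filter the full complex by Chevalley--Eilenberg word-length. Since $[\varepsilon,\,\cdot\,]$ preserves it and $\partial_{\mathrm{CE}}$ strictly lowers it, the filtration is exhaustive and bounded below in each degree, so the associated spectral sequence converges; its $E_0$-differential is $[\varepsilon,\,\cdot\,]$, its $E_1$-differential is induced by $\partial_{\mathrm{CE}}$, and at $E_1$ it splits according to word-length. The word-length-zero summand is $H(\mathrm{Def}_{\mathrm{ns}}(\mathsf{As}_\infty\to\mathsf{As}))$, the operadic Hochschild cohomology of the associative operad, which one checks directly vanishes in degrees $\leq 1$ (there $\mu$ bounds the arity-one identity). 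The word-length-one summand is the cohomology $H(\mathrm{Tot}\,\hat{\mathfrak{t}}\{1\}[1],\partial_\varepsilon)$ we wish to compare with. Hence it remains to show that the word-length-$\geq 2$ summands, together with all spectral-sequence differentials into the word-length-one layer, contribute nothing in degrees $\leq 1$; granting this, $\mathrm{Tot}\,\hat{\mathfrak{t}}\{1\}[1]\hookrightarrow\mathrm{Def}_{\mathrm{ns}}(\mathsf{As}_\infty\to C(\hat{\mathfrak{t}}))$ induces an isomorphism in degrees $\leq 1$, which with the degree bounds is the stated quasi-isomorphism and, via the pentagon description, gives $H^0=\hat{\mathfrak{t}}_2$ and $H^1=\mathfrak{grt}_1\oplus\mathbb{Q}\cdot[x,y]$. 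This last vanishing is precisely the content of the graph-complex computation: via the \v{S}evera--Willwacher zig-zag $\hat{\mathfrak{t}}_n\leftarrow\mathsf{TCG}(n)\to\mathsf{ICG}(n)$ recalled above, $\mathrm{Def}_{\mathrm{ns}}(\mathsf{As}_\infty\to C(\hat{\mathfrak{t}}))$ is quasi-isomorphic to the deformation complex $\mathrm{Def}(\mathsf{Ass}_\infty\to\mathsf{Graphs})$ built from Kontsevich's graphs (the complex already met above), and Willwacher's theorem \cite{Willwacher10}, resting on Tamarkin's operadic framework \cite{Tamarkin02}, identifies its low-degree cohomology with classes living in the Lie--linear part.

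Finally, the concrete description of $H^1$ amounts to identifying the pentagon solutions $\psi\in\widehat{\mathfrak{lie}}(x,y)\subset\hat{\mathfrak{t}}_3$ with $\mathfrak{grt}_1\oplus\mathbb{Q}\cdot[x,y]$; this is exactly Furusho's theorem \cite{Furusho10} that a solution of the pentagon equation not containing the bracket $[x,y]$ automatically satisfies the two hexagon equations and hence lies in $\mathfrak{grt}_1$, the remaining summand being the abelian pentagon solution $\psi=[x,y]$. I expect the genuine obstacle to be the middle step: establishing honestly that the deformation complex is quasi-isomorphic to Kontsevich's graph complex (so that Willwacher's computation applies), and then tracking through the spectral sequence that it is precisely the Lie--linear filtration layer that survives in degrees $\leq 1$; by comparison, the explicit form of the complex, the acyclicity of the word-length-zero part, and the pentagon reformulation are routine.
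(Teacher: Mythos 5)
The paper offers no proof of this statement: it is imported verbatim from the cited references (Furusho for the identification of pentagon solutions with \(\mathfrak{grt}_1\oplus\mathbb{Q}[x,y]\), Tamarkin and Willwacher for the quasi-isomorphism onto the Lie-linear part), so there is no in-paper argument to compare yours against. Your reconstruction of how those inputs fit together is essentially sound in its elementary parts: the deformation complex is \(\prod_n C(\hat{\mathfrak{t}}_n)\) with the operadic degree shifts and differential \(\partial_{CE}+[\varepsilon,\cdot\,]\); the subcomplex \(\mathrm{Tot}\,\hat{\mathfrak{t}}\{1\}[1]\) is precisely the Chevalley--Eilenberg word-length-one part, on which \(\partial_{CE}\) vanishes and which \([\varepsilon,\cdot\,]\) preserves; it sits in degrees \(\geq 0\) with arity \(n\) in degree \(n-2\), giving \(H^{<0}=0\), \(H^0=\hat{\mathfrak{t}}_2\) (one-dimensional), and the pentagon description of degree-one cocycles; and your computation that the word-length-zero layer is acyclic in degrees \(\leq 1\) is correct. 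The hard content --- that the layers of word length \(\neq 1\) contribute nothing --- you correctly isolate and defer to the graph-complex computations, which is also all the paper does.

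Two cautions. First, your sentence ``which with the degree bounds is the stated quasi-isomorphism'' is a slip: neither complex is concentrated in degrees \(\leq 1\) (the subcomplex alone has \(\hat{\mathfrak{t}}_n\) in degree \(n-2\) for every \(n\)), so an isomorphism on \(H^{\leq 1}\) does not upgrade to the full quasi-isomorphism by degree reasons; you must take the all-degrees statement from Willwacher/Tamarkin, i.e.\ the acyclicity of the non-Lie-linear filtration layers in every degree, not just in degrees \(\leq 1\). Second, the word-length spectral sequence deserves a convergence remark, since the complex is a product over arities of completed spaces; the filtration is bounded below and, in a fixed cohomological degree, word length and arity determine one another, which is what makes the argument legitimate, but this should be said rather than assumed. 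Neither point changes the verdict that your outline is a faithful account of the cited proof, with the genuinely difficult step (reducing to Kontsevich's graph complexes via the \v{S}evera--Willwacher zig-zag and invoking Willwacher's computation) correctly identified as the crux.
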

\begin{corollary}
\( H^1(\mathrm{Def}_{\scriptstyle{\mathrm{ns}}}(\mathsf{As}_{\infty}\xrightarrow{\varepsilon} \mathsf{Ger}))\cong \mathfrak{grt}_1\oplus \mathbb{Q}\cdot [x,y]\).
\end{corollary}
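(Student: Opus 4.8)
The plan is to deduce the statement from Theorem~\ref{willwacherfurusho} by transporting the deformation complex along the quasi-isomorphism of dg operads $q:\mathsf{Ger}\to C(\hat{\mathfrak{t}})$ recalled above, which sends the commutative product to $1\in C(\hat{\mathfrak{t}}_2)$ and the Lie bracket to $t_{12}\in C(\hat{\mathfrak{t}}_2)$. Here $\varepsilon:\mathsf{As}_{\infty}\to\mathsf{Ger}$ is the canonical morphism, i.e. the composite $\mathsf{As}_{\infty}\to\mathsf{As}\to\mathsf{Com}\to\mathsf{Ger}$: it sends the binary generator $m_2$ to the commutative product and every higher generator $m_n$ ($n\geq 3$) to zero. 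Since $\varepsilon:\mathsf{As}_{\infty}\to C(\hat{\mathfrak{t}})$ likewise sends $m_2$ to $1\in C(\hat{\mathfrak{t}}_2)$ and higher generators to zero, one has $q\circ\varepsilon=\varepsilon$, so $q$ carries the Maurer-Cartan element defining $\mathrm{Def}_{\scriptstyle{\mathrm{ns}}}(\mathsf{As}_{\infty}\xrightarrow{\varepsilon}\mathsf{Ger})$ to the one defining $\mathrm{Def}_{\scriptstyle{\mathrm{ns}}}(\mathsf{As}_{\infty}\xrightarrow{\varepsilon}C(\hat{\mathfrak{t}}))$. By functoriality of the deformation complex in its target it therefore induces a morphism of complexes
\[
 q_*:\mathrm{Def}_{\scriptstyle{\mathrm{ns}}}(\mathsf{As}_{\infty}\xrightarrow{\varepsilon}\mathsf{Ger})\longrightarrow \mathrm{Def}_{\scriptstyle{\mathrm{ns}}}(\mathsf{As}_{\infty}\xrightarrow{\varepsilon}C(\hat{\mathfrak{t}})).
\]

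The crux is to show that $q_*$ is a quasi-isomorphism. As a graded vector space, $\mathrm{Def}_{\scriptstyle{\mathrm{ns}}}(\mathsf{As}_{\infty}\xrightarrow{\varepsilon}\mathsf{P})$ is a product over arity, $\prod_{n\geq 2}\mathsf{P}(n)$ with suitable degree shifts, and its differential is the sum of the internal differential of $\mathsf{P}$, which preserves arity, and the bracket with the Maurer-Cartan element $\varepsilon$, which raises arity by one because $\varepsilon$ is concentrated in arity $2$. I would filter both source and target by arity, $F^p:=\prod_{n\geq p}\mathsf{P}(n)$; this filtration is exhaustive, Hausdorff and complete, and $q_*$ is filtered. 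On the mapping cone of $q_*$ the induced filtration has associated graded in filtration degree $p$ equal (up to shift) to the mapping cone of $q:\mathsf{Ger}(p)\to C(\hat{\mathfrak{t}}_p)$ equipped with its internal differential only, which is acyclic because $q$ is a quasi-isomorphism in each arity; hence the $E^1$-page of the cone's spectral sequence vanishes. A complete, Hausdorff, exhaustive filtered complex whose spectral sequence is already zero at the $E^1$-page is acyclic, by conditional convergence, so the mapping cone of $q_*$ is acyclic and $q_*$ is a quasi-isomorphism. In particular
\[
 H^1\!\bigl(\mathrm{Def}_{\scriptstyle{\mathrm{ns}}}(\mathsf{As}_{\infty}\xrightarrow{\varepsilon}\mathsf{Ger})\bigr)\;\cong\;H^1\!\bigl(\mathrm{Def}_{\scriptstyle{\mathrm{ns}}}(\mathsf{As}_{\infty}\xrightarrow{\varepsilon}C(\hat{\mathfrak{t}}))\bigr),
\]
and by Theorem~\ref{willwacherfurusho} the right-hand side equals $H^1(\mathrm{Tot}\,\hat{\mathfrak{t}}\{1\}[1],\partial_{\varepsilon})=\mathfrak{grt}_1\oplus\mathbb{Q}\cdot[x,y]$, which is the assertion.

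I expect the homotopy-invariance step, that $q_*$ is a quasi-isomorphism, to be the only real obstacle. The arity filtration is complete and Hausdorff but its associated-graded tower is infinite in every total degree, so one cannot invoke the naive comparison theorem for bounded spectral sequences and must instead argue through conditional convergence of the complete-filtration spectral sequence, equivalently a Mittag-Leffler / $\varprojlim^{1}$ argument on the tower of arity-quotients. An alternative packaging, which sidesteps the bookkeeping, is to observe that $\mathsf{As}_{\infty}$ is a cofibrant resolution of $\mathsf{As}$, so that $\mathrm{Def}_{\scriptstyle{\mathrm{ns}}}(\mathsf{As}_{\infty}\xrightarrow{\varepsilon}-)$ models a derived mapping space and is therefore invariant under quasi-isomorphisms of the target compatible with the structure map; $q$ is such a quasi-isomorphism. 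Everything else---the explicit form of $\varepsilon$ on $\mathsf{Ger}$, functoriality of $\mathrm{Def}_{\scriptstyle{\mathrm{ns}}}$, and the shape of the differential---is routine, and the final identification of $H^1$ is precisely Theorem~\ref{willwacherfurusho}.
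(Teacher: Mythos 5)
Your argument is correct and is essentially the deduction the paper intends: the corollary is stated without proof immediately after Theorem~\ref{willwacherfurusho}, the implicit reasoning being exactly the transport of the deformation complex along the quasi-isomorphism \(\mathsf{Ger}\to C(\hat{\mathfrak{t}})\). You merely make explicit the one genuinely non-formal point (that an arity-wise quasi-isomorphism of targets induces a quasi-isomorphism of convolution deformation complexes, via the complete, exhaustive arity filtration and conditional convergence), which is a standard and correctly executed completion of the paper's implicit argument.
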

The length-grading on the Lie algebra \(\mathfrak{t}_n\) defines an additional grading on \(C(\mathfrak{t}_n)\). To be precise, we will say that a Lie word in \(\mathfrak{t}_n\) has length \(d\) if it contains \(d\) nested Lie-brackets of generators \(t_{ij}\), and define the grading on the Chevalley-Eilenberg complex by extending multiplicatively. Cocycles in \(\mathrm{Def}_{\scriptstyle{\mathrm{ns}}}(\mathsf{As}_{\infty}\xrightarrow{\varepsilon} \mathsf{Ger})\) that represent the Grothendieck-Teichm\"uller elements can be deduced from the following lemma.
\begin{lemma}\cite{Kontsevich99,LambrechtsVolic08}
Any \(\partial_{CE}\)-cocycle in \(\mathrm{Def}_{\scriptstyle{\mathrm{ns}}}(\mathsf{As}_{\infty}\xrightarrow{\varepsilon} C(\mathfrak{t}))\) of length \(\geq 1\) is \(\partial_{CE}\)-exact.
\end{lemma}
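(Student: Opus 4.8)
The plan is to reduce the statement to one arity at a time and then to the computation of the $\partial_{CE}$-cohomology of $C(\mathfrak{t}')$ recalled above. The differential $\partial_{CE}$ is the internal Chevalley--Eilenberg differential induced by the cobracket of $\mathfrak{t}'$, and it acts componentwise on the arity pieces of $\mathrm{Def}_{\scriptstyle{\mathrm{ns}}}(\mathsf{As}_{\infty}\xrightarrow{\varepsilon} C(\mathfrak{t}))$, each of which is, up to a global shift, the complex $(C(\hat{\mathfrak{t}}_n),\partial_{CE})$ --- whose cohomology is, weight by weight, linearly dual to that of $(C(\mathfrak{t}'_n),\partial_{CE})$. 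So it suffices to prove that for each $n\geq 2$ the $\partial_{CE}$-cohomology of $C(\mathfrak{t}'_n)$ is concentrated in length $0$. For that I would record the bigrading: each generator $t_{ij}$ has weight $1$, and as the defining relations $[t_{ij},t_{kl}]=0=[t_{ij}+t_{jk},t_{ik}]$ are homogeneous in the $t_{ij}$ the weight descends to $\mathfrak{t}_n$, with a Lie word of length $\ell$ having weight $\ell+1$. Extending weight multiplicatively to $C(\mathfrak{t}'_n)$, a monomial built from $p$ Lie-word factors of weights $w_1,\dots,w_p$ has weight $W=\sum_i w_i$, polynomial degree $p$, and length $L=W-p$. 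Since the cobracket on $\mathfrak{t}'_n$ preserves weight, so does its derivation extension $\partial_{CE}$, which changes the polynomial degree by one; hence $\partial_{CE}$ preserves $W$ and shifts $L$ by exactly one. Therefore $(C(\mathfrak{t}'_n),\partial_{CE})$ is the direct sum over $W\geq 0$ of the finite-dimensional subcomplexes $C(\mathfrak{t}'_n)_W$, each cohomologically graded by $p$ (equivalently by length), so any $\partial_{CE}$-cocycle decomposes into $(W,p)$-homogeneous cocycles and it is enough to treat a homogeneous cocycle of length $L\geq 1$.

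Such a cocycle is $\partial_{CE}$-exact exactly when the cohomology vanishes in lengths $\geq 1$, i.e. is concentrated in length $0$ --- which is the content of the recalled isomorphism $H(C(\mathfrak{t}'),\partial_{CE})\cong\mathsf{coGer}$. That isomorphism respects the weight grading and sends $[t^*_{ij}]$ to $\omega_{ij}$; since $\mathsf{coGer}(n)$ is generated as an algebra by the classes $\omega_{ij}$, each of which has cohomological degree equal to its weight (so length $0$), the whole cohomology sits in length $0$, and the lemma follows. What makes that computation true is the Koszulity of the Arnol'd algebra: $\mathsf{coGer}(n)=H^{\bullet}(C_n(\mathbb{C}))$ is quadratic, its quadratic dual is the universal enveloping algebra of the holonomy Lie algebra $\mathfrak{t}_n$, and it is a Koszul algebra --- because the braid arrangement is supersolvable, equivalently the Arnol'd relations admit a quadratic Gr\"obner basis, equivalently the Fadell--Neuwirth fibration $C_n(\mathbb{C})\to C_{n-1}(\mathbb{C})$ splits $\mathfrak{t}_n$ as a semidirect product $\widehat{\mathfrak{lie}}(t_{1n},\dots,t_{n-1,n})\rtimes\mathfrak{t}_{n-1}$ of a free Lie algebra with $\mathfrak{t}_{n-1}$. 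Koszulity is precisely the acyclicity of the Koszul complex away from the diagonal, which is the off-diagonal (length $\geq 1$) vanishing. This is implicit in \cite{Kontsevich99} and carried out in \cite{LambrechtsVolic08}.

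Should one prefer a self-contained proof of the off-diagonal vanishing, I would argue by induction on $n$: the case $n=2$ is immediate since $\mathfrak{t}_2$ is one-dimensional, and for the step I would feed the decomposition $\mathfrak{t}_n=\widehat{\mathfrak{lie}}(t_{1n},\dots,t_{n-1,n})\rtimes\mathfrak{t}_{n-1}$ into the Hochschild--Serre spectral sequence, whose $E_2$-page is $H^{\bullet}\!\big(\mathfrak{t}_{n-1};\,H^{\bullet}(\widehat{\mathfrak{lie}}(t_{1n},\dots,t_{n-1,n});\mathbb{Q})\big)$; since the cohomology of a free Lie algebra lives only in degrees $0$ and $1$ with weight equal to degree, one reduces to checking that the cohomology of $\mathfrak{t}_{n-1}$ with the degree-one coefficient module --- the span of the $t_{in}$, an iterated extension of ``standard'' $\mathfrak{t}_{n-1}$-representations --- is again diagonal. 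Propagating diagonality through the step, i.e. strengthening the inductive hypothesis to cover exactly the coefficient modules produced by the fibration, is where I expect the real work to be; the remainder --- the passage to a single arity, the weight/length bookkeeping, and the observation that $\partial_{CE}$ shifts length by one (so that a general cocycle splits into homogeneous ones and a length-$\geq 1$ cocycle can only represent an off-diagonal class) --- is formal.
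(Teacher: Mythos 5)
The paper states this lemma without proof, as a citation to Kontsevich and Lambrechts--Voli\'c, and your argument correctly supplies what those references contain: the arity-by-arity and weight-by-weight reduction is sound, and the substantive input is exactly the off-diagonal (length $\geq 1$) vanishing of $H(C(\hat{\mathfrak{t}}_n))$, i.e.\ the Koszulity of the Arnol'd algebra, which you correctly identify and for which you either defer to the same references or sketch the standard induction via the semidirect-product decomposition of $\mathfrak{t}_n$. This matches the intended proof; the only loose end is the one you flag yourself (propagating diagonality through the coefficient modules in the Hochschild--Serre step), which is genuine work but is precisely what the cited literature carries out.
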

\begin{corollary}\label{representatives}
Let \(\psi\in \mathfrak{t}_3\) be a degree \(1\) cocycle in \((\mathrm{Tot}\,\mathfrak{t}\{1\}[1],\partial_{\varepsilon})\) of length \(k\). Then for \(3\leq r\leq k+3\) there are \(\varphi_r\in C^{r-1}(\mathfrak{t}_r)\) of length \(k+3-r\) such that
 \[
 \psi = (\partial_{CE}+\partial_{\varepsilon})(\varphi_1+\dots+\varphi_r)-\partial_{\varepsilon}\varphi_r
 \]
in \(\mathrm{Def}_{\scriptstyle{\mathrm{ns}}}(\mathsf{As}_{\infty}\xrightarrow{\varepsilon} C(\mathfrak{t}))\).
\end{corollary}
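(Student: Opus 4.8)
The plan is to build the cochains $\varphi_r$ recursively, by the standard zig-zag inside the bicomplex $\mathrm{Def}_{\scriptstyle{\mathrm{ns}}}(\mathsf{As}_{\infty}\xrightarrow{\varepsilon}C(\mathfrak{t}))$ whose two anticommuting differentials are $\partial_{CE}$, which is internal to each arity and respects the Chevalley--Eilenberg degree and the length grading, and $\partial_{\varepsilon}=[\varepsilon,-]$, which raises the arity by one; at each stage the preceding Lemma of Kontsevich and Lambrechts--Voli\'c (every $\partial_{CE}$-cocycle of length $\geq 1$ is $\partial_{CE}$-exact) is the only input. I induct on $r$ over the range $3\leq r\leq k+3$.

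\emph{Base case $r=3$.} Since $\psi$ lies in the subcomplex $\mathrm{Tot}\,\hat{\mathfrak{t}}\{1\}[1]$, on which the ambient differential $\partial_{CE}+\partial_{\varepsilon}$ is by construction just $\partial_{\varepsilon}$, we have $\partial_{CE}\psi=0$; and $\psi$ has length $k\geq 1$, so the Lemma produces an element with $\partial_{CE}$-image $\psi$. Because $\partial_{CE}$ preserves arity, CE-degree and length, I may replace that element by its homogeneous component of arity $3$, CE-degree $2$ and length $k$; this is $\varphi_3$. Then $(\partial_{CE}+\partial_{\varepsilon})\varphi_3-\partial_{\varepsilon}\varphi_3=\partial_{CE}\varphi_3=\psi$, which is the asserted identity (the lower terms $\varphi_1,\varphi_2$ being taken zero, or inert in low arity).

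\emph{Inductive step.} Assume $\varphi_3,\dots,\varphi_r$ satisfy $\psi=(\partial_{CE}+\partial_{\varepsilon})(\varphi_3+\dots+\varphi_r)-\partial_{\varepsilon}\varphi_r$. Reading off the arity-$r$ component of this identity gives $\partial_{CE}\varphi_r=-\partial_{\varepsilon}\varphi_{r-1}$ (with the convention $\varphi_2=0$, so that $\partial_{CE}\varphi_3=\psi$). Applying $\partial_{CE}$ and using $\partial_{CE}\partial_{\varepsilon}=-\partial_{\varepsilon}\partial_{CE}$ together with $\partial_{\varepsilon}^2=0$ (and $\partial_{\varepsilon}\psi=0$ at the first step) shows that $\partial_{\varepsilon}\varphi_r$ is a $\partial_{CE}$-cocycle, homogeneous of arity $r+1$, CE-degree $r-1$ and length $k+2-r$. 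As long as this length is positive the Lemma applies and, extracting homogeneous pieces exactly as in the base case, furnishes $\varphi_{r+1}\in C^{r}(\mathfrak{t}_{r+1})$ of length $k+3-(r+1)$ with $\partial_{CE}\varphi_{r+1}=-\partial_{\varepsilon}\varphi_r$. Adding $(\partial_{CE}+\partial_{\varepsilon})\varphi_{r+1}=-\partial_{\varepsilon}\varphi_r+\partial_{\varepsilon}\varphi_{r+1}$ to the level-$r$ identity produces the level-$(r+1)$ identity, and the length of the $\varphi$'s has dropped by one; iterating carries the construction up to $\varphi_{k+3}$, of length $0$, which is the last cochain the statement calls for.

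The genuinely delicate part, and the place to expend care, is the length--degree bookkeeping that makes the recursion terminate in exactly the right place: one must verify that $\partial_{\varepsilon}$ shifts the length grading by precisely $-1$ on the elements that arise, so that $\varphi_r$ really has length $k+3-r$ and the obstruction $\partial_{\varepsilon}\varphi_r$ retains positive length at every stage where the Lemma is invoked, and one must scrutinize the terminal instance $r=k+3$, where the obstruction sits at the smallest admissible length and the hypothesis of the Lemma is being pushed to its boundary. Everything else is formal: the homogeneity of the chosen $\partial_{CE}$-primitives and the collapse of the remainder $-\partial_{\varepsilon}\varphi_r$ in arity $r+1$ follow at once from $\partial_{CE}$ being arity- and length-preserving and from the bicomplex identity $\partial_{CE}\partial_{\varepsilon}+\partial_{\varepsilon}\partial_{CE}=0$.
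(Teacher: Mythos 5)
Your proof is correct and is essentially the paper's own argument: the paper likewise sets $\partial_{CE}\varphi_3=\psi$ via the Kontsevich--Lambrechts--Voli\'c lemma, observes that $-\partial_{\varepsilon}\varphi_r$ is again a $\partial_{CE}$-cocycle, and continues inductively. Your version merely spells out the length/arity bookkeeping and the termination at $r=k+3$ that the paper leaves implicit.
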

\begin{proof}
By the lemma there exists \(\varphi_3\) such that \(\psi=\partial_{CE}\varphi_3\). Then note that \(-\partial_{\varepsilon}\varphi_3\) is a \(\partial_{CE}\)-cocycle, so \(-\partial_{\varepsilon}\varphi_3 = \partial_{CE}\varphi_4\) has a solution \(\varphi_4\), again by the lemma. Continue inductively.
\end{proof}
\begin{remark}
The cocycle \(-\partial_{\varepsilon}\varphi_{k+3}\) has length \(0\), hence is a product of generators \(t_{ij}\) and one can proceed to try to find its preimage in \(\mathrm{Def}_{\scriptstyle{\mathrm{ns}}}(\mathsf{As}_{\infty}\xrightarrow{\varepsilon} \mathsf{Ger})\) under \(\mathsf{Ger}\to C(\mathfrak{t})\). Conversely, if one has a class in the deformation complex of the morphism into the Gerstenhaber operad, then one finds a corresponding cocycle representative in \((\mathrm{Tot}\,\hat{\mathfrak{t}}\{1\}[1],\partial_{\varepsilon})\) by applying the recipe in reverse. The simplest example is the class given by the bracket \([x,y]=[t_{13},t_{23}]\). It is cohomologous to \(-t_{13}\wedge t_{24}\), which is the image of the Gerstenhaber operation \(-[1,3]\wedge [2,4]\).
\end{remark}
\subsection{Injection into the indecomposable cohomological weights}
Let \(CH(\mathfrak{t}')\) be the complex \((\mathrm{Tot}^{\oplus}{\mathfrak{t}'}\{-1\}[-1],\partial_{\varepsilon}')\), dual to \((\mathrm{Tot}\,\hat{\mathfrak{t}}\{1\}[1],\partial_{\varepsilon})\) and note that there is a chain map \(\mathfrak{e}(C(\mathfrak{t}')\to CH(\mathfrak{t}')\). Let \(\mathfrak{grt}'_1\) denote the graded dual of the space of \(\psi\in\mathfrak{lie}(x,y)\) (no degree completion) satisfying the pentagon and not containing \([x,y]\), so \((\mathfrak{grt}_1')^*=\mathfrak{grt}_1\). 
\begin{corollary}
The degree zero cohomology of \((\tau^{\leq -1}CH(\mathfrak{t}'))[-1]\) is \(\mathfrak{grt}_1'\oplus\mathbb{Q}x^{01}\), if we by \(x^{01}\) denote the cohomology class dual to that defined by the bracket \([x,y]\).
\end{corollary}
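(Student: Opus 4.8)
The plan is to deduce the statement from Theorem \ref{willwacherfurusho} by a careful bookkeeping of the operadic and homological shifts, and then to show by a short argument that passing to the truncation $\tau^{\leq -1}$ changes nothing at the relevant cohomological degree.

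First I would pin down the cohomological degrees. With the conventions $\mathsf{C}\{-1\}(n)=\mathsf{C}(n)[n-1]$ and $V[k]^{i}=V^{i+k}$ used in the paper, the arity-$n$ summand of $CH(\mathfrak{t}')=(\mathrm{Tot}^{\oplus}\mathfrak{t}'\{-1\}[-1],\partial_{\varepsilon}')$ is the degree-zero space $\mathfrak{t}'_n$ placed in cohomological degree $2-n$. Since $\mathfrak{t}_1=0$, the complex $CH(\mathfrak{t}')$ is concentrated in degrees $\leq 0$, with $CH(\mathfrak{t}')^{0}=\mathfrak{t}'_2=\mathbb{Q}t_{12}^{*}$ and $CH(\mathfrak{t}')^{-1}=\mathfrak{t}'_3$; hence $(\tau^{\leq -1}CH(\mathfrak{t}'))[-1]$ has degree-zero part $\mathfrak{t}'_3$ and $H^{0}\bigl((\tau^{\leq -1}CH(\mathfrak{t}'))[-1]\bigr)=H^{-1}\bigl(\tau^{\leq -1}CH(\mathfrak{t}')\bigr)$. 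Next I would invoke duality: $CH(\mathfrak{t}')$ is by construction the termwise graded dual of $(\mathrm{Tot}\,\hat{\mathfrak{t}}\{1\}[1],\partial_{\varepsilon})$, and since each homogeneous component $(\mathfrak{t}_n^{(d)})^{*}$ is finite dimensional, forming the graded dual commutes with cohomology, so $H^{j}(CH(\mathfrak{t}'))\cong\bigl(H^{-j}(\mathrm{Tot}\,\hat{\mathfrak{t}}\{1\}[1],\partial_{\varepsilon})\bigr)^{*}$. Theorem \ref{willwacherfurusho} then yields $H^{-1}(CH(\mathfrak{t}'))\cong(\mathfrak{grt}_1\oplus\mathbb{Q}\cdot[x,y])^{*}=\mathfrak{grt}_1'\oplus\mathbb{Q}x^{01}$, the splitting surviving dualization because $\mathfrak{grt}_1\cap\mathbb{Q}[x,y]=0$, because $(\mathfrak{grt}_1')^{*}=\mathfrak{grt}_1$ by definition, and because $x^{01}$ is precisely the class dual to $[x,y]$; likewise $H^{0}(CH(\mathfrak{t}'))\cong\hat{\mathfrak{t}}_2^{*}$ is one-dimensional.

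To remove the truncation I would observe that the differential $CH(\mathfrak{t}')^{-1}\to CH(\mathfrak{t}')^{0}$ vanishes: dually this says $\partial_{\varepsilon}:\hat{\mathfrak{t}}_2\to\hat{\mathfrak{t}}_3$ is zero, which holds because degree $-1$ of $\mathrm{Tot}\,\hat{\mathfrak{t}}\{1\}[1]$ is $\hat{\mathfrak{t}}_1=0$, so $H^{0}=\ker(\partial_{\varepsilon}|_{\hat{\mathfrak{t}}_2})$, and Theorem \ref{willwacherfurusho} identifies this kernel with all of $\hat{\mathfrak{t}}_2$. Thus $CH(\mathfrak{t}')^{0}=\mathbb{Q}t_{12}^{*}$ splits off as a direct-summand subcomplex with zero differential, so $H^{-1}(\tau^{\leq -1}CH(\mathfrak{t}'))=H^{-1}(CH(\mathfrak{t}'))$, and combining with the previous step gives
 \[
  H^{0}\bigl((\tau^{\leq -1}CH(\mathfrak{t}'))[-1]\bigr)=\mathfrak{grt}_1'\oplus\mathbb{Q}x^{01},
 \]
as claimed. (Should one prefer to avoid the vanishing-differential remark, the same conclusion follows from the short exact sequence $0\to\mathbb{Q}t_{12}^{*}\to CH(\mathfrak{t}')\to\tau^{\leq -1}CH(\mathfrak{t}')\to 0$: in its long exact sequence the map $H^{0}(\mathbb{Q}t_{12}^{*})\to H^{0}(CH(\mathfrak{t}'))$ is a surjection between one-dimensional spaces, hence an isomorphism, which forces the connecting map to vanish and $H^{-1}(CH(\mathfrak{t}'))\xrightarrow{\ \sim\ }H^{-1}(\tau^{\leq -1}CH(\mathfrak{t}'))$.)

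I expect the only genuine difficulty to be the bookkeeping in the first two steps: one must verify that the ``dual'' pairing between $CH(\mathfrak{t}')$ and $\mathrm{Tot}\,\hat{\mathfrak{t}}\{1\}[1]$ is an honest degreewise identification, of finite type in each arity and in each length, so that cohomology really does commute with dualization, and that the outer shifts $\tau^{\leq -1}(-)[-1]$ align with the inner $\{-1\}[-1]$ exactly as in the first step. All the substantive mathematical content is contained in Theorem \ref{willwacherfurusho}.
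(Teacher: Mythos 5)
Your argument is correct and follows the paper's own proof in essence: both rest on observing that $\partial_{\varepsilon}\colon\hat{\mathfrak{t}}_2\to\hat{\mathfrak{t}}_3$ vanishes (so the truncation does not alter the relevant cohomology) and then dualizing Theorem \ref{willwacherfurusho}. Your additional bookkeeping of the shifts, the finite-type justification for commuting duality with cohomology, and the alternative long-exact-sequence argument are all sound elaborations of the same route.
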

\begin{proof}
The differential
 \[
 \partial_{\varepsilon}: (\mathrm{Tot}\,\hat{\mathfrak{t}}\{1\}[1])^0=\hat{\mathfrak{t}}_2\to (\mathrm{Tot}\,\hat{\mathfrak{t}}\{1\}[1])^1=\hat{\mathfrak{t}}_3
 \]
is \(0\), so the truncation \((\tau^{\geq 1}(\mathrm{Tot}\,\hat{\mathfrak{t}}\{1\}[1],\partial_{\varepsilon}))[1]\) has the same cohomology as the untruncated complex. Dualizing, we deduce that \((\tau^{\leq -1}CH(\mathfrak{t}'))[-1]\) has the same cohomology as \(CH(\mathfrak{t}')[-1]\). The result is then a consequence of \ref{willwacherfurusho}.
\end{proof}
The goal of this section is to prove that \(I^+\) defines an injection
 \[
  \mathfrak{grt}_1'\oplus\mathbb{Q}x^{01} \to Q \boldsymbol{\mathcal{Z}}_{\scriptstyle{\mathrm{ns}}}(\mathsf{coGer})
 \]
into the indecomposable cohomological weights on the Gerstenhaber cooperad. As a first step, we prove the following.
\begin{lemma}\label{injectivebraids}
The composite
 \[
 \mathfrak{grt}_1'\oplus\mathbb{Q}x^{01}\cong H^0(\mathfrak{e}(C(\mathfrak{t}'))[-1])\to H^0(\tau^{\leq 0}(\mathfrak{e}(C(\mathfrak{t}'))[-1])) \xrightarrow{I^+} 
 Q\boldsymbol{\mathcal{Z}}_{\scriptstyle{\mathrm{ns}}}(C(\mathfrak{t}'))
 \]
is injective.
\end{lemma}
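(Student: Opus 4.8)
The plan is to use the chain map $p\colon\mathfrak{e}(C(\mathfrak{t}'))\to CH(\mathfrak{t}')$ as a detector of nontriviality. By Theorem~\ref{willwacherfurusho} it is a quasi-isomorphism, and, combined with the Corollary preceding this lemma, it identifies $H^0(\mathfrak{e}(C(\mathfrak{t}'))[-1])$ with $H^0(CH(\mathfrak{t}')[-1])=\mathfrak{grt}_1'\oplus\mathbb{Q}x^{01}$; this is the isomorphism appearing in the statement. On the other hand, dually to the fact that the subcomplex $\mathrm{Tot}\,\hat{\mathfrak{t}}\{1\}[1]$ of Theorem~\ref{willwacherfurusho} sits in degree $1$ as the arity-three linear summand $\hat{\mathfrak{t}}_3$, the map $p$ in cohomological degree $-1$ is simply the projection of $\mathfrak{e}(C(\mathfrak{t}'))^{-1}=\bigoplus_m C(\mathfrak{t}'_m)^{m-2}$ onto its arity-three summand $C(\mathfrak{t}'_3)^1=\mathfrak{t}'_3=CH(\mathfrak{t}')^{-1}$. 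So the strategy is: suppose a nonzero class $x\in\mathfrak{grt}_1'\oplus\mathbb{Q}x^{01}$ maps to $0$ in $Q\boldsymbol{\mathcal{Z}}_{\scriptstyle{\mathrm{ns}}}(C(\mathfrak{t}'))$; pushing the resulting vanishing relation through $p$ I will show that $x$ was already a coboundary in $CH(\mathfrak{t}')$, hence $x=0$. The first arrow $H^0(\mathfrak{e}(C(\mathfrak{t}'))[-1])\to H^0(\tau^{\leq 0}(\mathfrak{e}(C(\mathfrak{t}'))[-1]))$ needs no separate attention: it is injective for the trivial reason that genuine cohomology classes inject into the quotient of cocycles by coboundaries, and the argument below handles the full composite.

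First I would unwind what the vanishing means. Recall from the proof of \ref{surjectiononindecomposables} that $Q\boldsymbol{\mathcal{Z}}_{\scriptstyle{\mathrm{ns}}}(C(\mathfrak{t}'))$ is the degree-zero cohomology of the quotient complex $\tau^{\leq 0}(\mathfrak{e}(C(\mathfrak{t}'))[-1])/K$, where $K=\mathbb{Q}u\oplus\tau^{\leq 0}(\mathfrak{e}^+[-1])\,\sha\,\tau^{\leq 0}(\mathfrak{e}^+[-1])$, the map being induced by $I^+$. Since $\tau^{\leq 0}(\mathfrak{e}(C(\mathfrak{t}'))[-1])$ is concentrated in nonpositive degrees, a class that survives to degree-zero cohomology vanishes there precisely when its representative becomes a coboundary. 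Thus, choosing an honest cocycle $\Psi\in\mathfrak{e}(C(\mathfrak{t}'))^{-1}$ representing $x$ and assuming its image in $Q\boldsymbol{\mathcal{Z}}_{\scriptstyle{\mathrm{ns}}}(C(\mathfrak{t}'))$ is zero, I obtain $\zeta\in\mathfrak{e}(C(\mathfrak{t}'))^{-2}$ with $\Psi-\partial\zeta\in K$ inside $\mathfrak{e}(C(\mathfrak{t}'))^{-1}$.

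Now apply $p$. Since $p$ commutes with differentials, $p(\partial\zeta)=\partial(p\zeta)$ is a coboundary in $CH(\mathfrak{t}')$; it remains to show that $p$ annihilates $K\cap\mathfrak{e}(C(\mathfrak{t}'))^{-1}$. This is the one point that needs to be checked with care, and the step I expect to be the main obstacle: one must verify that $\tau^{\leq 0}(\mathfrak{e}^+[-1])$ vanishes in arity two. Its only possible arity-two contribution is the Arnol'd form $\omega_{12}\in C(\mathfrak{t}'_2)^1$, but that element sits in degree $1$ of $\mathfrak{e}^+[-1]$ and is discarded by the truncation; hence in any shuffle $\alpha\,\sha_{\iota,\iota'}\,\alpha'$ occurring in $K$ both factors have arity $\geq 3$, so the shuffle lives in arity $n+n'-2\geq 4$, while the generator $u$ of $K$ lives in arity two. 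Since $p$ in degree $-1$ retains only the arity-three summand, it kills all of $K\cap\mathfrak{e}(C(\mathfrak{t}'))^{-1}$. Therefore $p(\Psi)=\partial(p\zeta)$, so the $CH(\mathfrak{t}')$-cocycle $p(\Psi)$ represents $0$ in $H^{-1}(CH(\mathfrak{t}'))$. But $p$ is a quasi-isomorphism, and under the identifications $\mathfrak{grt}_1'\oplus\mathbb{Q}x^{01}\cong H^0(\mathfrak{e}(C(\mathfrak{t}'))[-1])\xrightarrow{\ \sim\ }H^0(CH(\mathfrak{t}')[-1])$ the class of $p(\Psi)$ is exactly $x$; hence $x=0$, contradicting the choice of $x$. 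This proves that the composite is injective.
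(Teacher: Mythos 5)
Your proof is correct and follows essentially the same route as the paper: both use the tensor-length-one projection $\mathfrak{e}(C(\mathfrak{t}'))\to CH(\mathfrak{t}')$ as a detector, observe that it kills the kernel $\mathbb{Q}u\oplus\tau^{\leq 0}(\mathfrak{e}^+[-1])\,\sha\,\tau^{\leq 0}(\mathfrak{e}^+[-1])$ of the projection to indecomposables, and invoke Theorem~\ref{willwacherfurusho} to conclude. The only (immaterial) difference is that you justify the vanishing on shuffles by an arity count in degree $-1$ (both factors have arity $\geq 3$, so the shuffle has arity $\geq 4$, while $p$ retains arity $3$), whereas the paper argues uniformly that shuffles of augmentation-ideal elements have tensor-length $\geq 2$ while $p$ is the projection onto tensor-length $1$.
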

\begin{proof}
Recall that we denote \(\mathfrak{e}^+=\mathfrak{e}/\mathbb{Q}u\subset\mathfrak{e}\) (the subcomplex obtained by removing the unit \(u\in\mathsf{A}(2)\)). Consider the following diagram:
\[
\begin{tikzpicture}[descr/.style={fill=white,inner sep=2.5pt}]
    \matrix (m) [matrix of math nodes, row sep=2em,
    column sep=2.5em, text height=1.5ex, text depth=0.25ex]
    { & \mathfrak{e}^+(C(\mathfrak{t}')[-1])  \\
   \tau^{\leq 0}(\mathfrak{e}^+(C(\mathfrak{t}'))[-1])\,\sha\,\tau^{\leq 0}(\mathfrak{e}^+(C(\mathfrak{t}'))[-1])
    & \tau^{\leq 0}(\mathfrak{e}^+(C(\mathfrak{t}'))[-1]) \\
    & \tau^{\leq 0}(CH(\mathfrak{t}')[-1]) \\ };
    \path[->,font=\scriptsize]
    (m-1-2) edge node[auto] {} (m-2-2)
    (m-2-1) edge node[auto] {} (m-2-2)
    (m-2-2) edge node[auto,swap] {} (m-3-2);
\end{tikzpicture}
\]
Here \(\tau^{\leq 0}(\mathfrak{e}^+(C(\mathfrak{t}'))[-1])\,\sha\,\tau^{\leq 0}(\mathfrak{e}^+(C(\mathfrak{t}'))[-1])\) is the kernel of the projection \(I^+\) from \(\tau^{\leq 0}(\mathfrak{e}^+(C(\mathfrak{t}'))[-1])\) to \(Q\mathfrak{Z}_{\scriptstyle{\mathrm{ns}}}(C(\mathfrak{t}'))\). The vertical composite
 \[
 \mathfrak{e}^+(C(\mathfrak{t}'))[-1] \to \tau^{\leq 0}(CH(\mathfrak{t}')[-1])
 \]
is an isomorphism on degree \(0\) cohomology. The composite
 \[
 \tau^{\leq 0}(\mathfrak{e}^+(C(\mathfrak{t}'))[-1])\,\sha\,\tau^{\leq 0}(\mathfrak{e}^+(C(\mathfrak{t}'))[-1]),
 \to \tau^{\leq 0}(CH(\mathfrak{t}')[-1])
 \]
on the other hand, is zero, already at the level of complexes, because any element \(\alpha\, \sha_{\iota,\iota'}\, \alpha'\) in the kernel of the projection to indecomposables must be of tensor-length \(\geq 2\) (i.e., must lie in \(\bigoplus_n C^{\geq 2}(\mathfrak{t}'_n)[n-2]\)), and the vertical arrow down to \(CH(\mathfrak{t}')\) is projection onto tensor-length \(1\). Taking the long exact sequence associated to the projection onto indecomposables and using the degree-truncation shows that
 \[
 H^0(Q\mathfrak{Z}_{\scriptstyle{\mathrm{ns}}}(C(\mathfrak{t}')))
 = \mathrm{Coker}\bigl(
 H^0(\tau^{\leq 0}(\mathfrak{e}^+(C(\mathfrak{t}'))[-1])\,\sha\,\tau^{\leq 0}(\mathfrak{e}^+(C(\mathfrak{t}'))[-1]))
 \to H^0(\tau^{\leq 0}(\mathfrak{e}^+(C(\mathfrak{t}'))[-1]))\bigr).
 \]
We then apply \ref{equalityofindecomposables}.
\end{proof}
\begin{theorem}
The composite
 \[
 \mathfrak{grt}_1'\oplus\mathbb{Q}x^{01}\cong H^0(\mathfrak{e}(\mathsf{coGer})[-1])\to H^0(\tau^{\leq 0}(\mathfrak{e}(\mathsf{coGer})[-1])) \xrightarrow{I^+} 
 Q\boldsymbol{\mathcal{Z}}_{\scriptstyle{\mathrm{ns}}}(\mathsf{coGer})
 \]
is injective.
\end{theorem}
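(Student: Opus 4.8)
The plan is to deduce the statement from Lemma~\ref{injectivebraids} by transport along the quasi-isomorphism $q\colon C(\mathfrak{t}')\to\mathsf{coGer}$ of the remark above --- the morphism of ns DGCA cooperads with pullbacks and augmentations sending $t^{*}_{ij}$ to $\omega_{ij}$ and words of bracket-length $>1$ to zero --- using the functoriality of $\mathfrak{e}$, $\mathfrak{Z}_{\scriptstyle{\mathrm{ns}}}$, $\boldsymbol{\mathcal{Z}}_{\scriptstyle{\mathrm{ns}}}$ and of the indecomposables functor $Q$. First I would record the consequences of $q$ being a quasi-isomorphism. Dualizing, $q^{*}\colon\mathsf{Ger}\to C(\hat{\mathfrak{t}})$ is a quasi-isomorphism of dg operads, so (exactly as in the corollary to Theorem~\ref{willwacherfurusho}) it induces a quasi-isomorphism of deformation complexes; dually $\mathfrak{e}(q)\colon\mathfrak{e}(C(\mathfrak{t}'))\to\mathfrak{e}(\mathsf{coGer})$ is a quasi-isomorphism, and under the identifications of Theorem~\ref{willwacherfurusho} it induces the identity on $\mathfrak{grt}_1'\oplus\mathbb{Q}x^{01}=H^{0}(\mathfrak{e}(C(\mathfrak{t}'))[-1])=H^{0}(\mathfrak{e}(\mathsf{coGer})[-1])$ --- this is precisely the isomorphism appearing in the statement, and it also exhibits the first arrow $H^{0}(\mathfrak{e}(\mathsf{coGer})[-1])\to H^{0}(\tau^{\leq0}(\mathfrak{e}(\mathsf{coGer})[-1]))$ of the composite as the inclusion of the cocycle classes, hence injective. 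Finally $q$ is surjective in each degree (the $\omega_{ij}$ generate $\mathsf{coGer}$), so $\tau^{\leq-1}\mathfrak{e}(q)$, $C(\tau^{\leq-1}\mathfrak{e}(q))$ and $\mathfrak{Z}_{\scriptstyle{\mathrm{ns}}}(q)$ are surjective; as all the complexes in sight sit in degrees $\leq0$, the maps $\boldsymbol{\mathcal{Z}}_{\scriptstyle{\mathrm{ns}}}(q)$ and $Q\boldsymbol{\mathcal{Z}}_{\scriptstyle{\mathrm{ns}}}(q)$ are surjective.

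By naturality of $I^{+}$ this gives a commutative square
\[
\begin{array}{ccc}
\mathfrak{grt}_1'\oplus\mathbb{Q}x^{01}&\longrightarrow&Q\boldsymbol{\mathcal{Z}}_{\scriptstyle{\mathrm{ns}}}(C(\mathfrak{t}'))\\
\big\|&&\big\downarrow\\
\mathfrak{grt}_1'\oplus\mathbb{Q}x^{01}&\longrightarrow&Q\boldsymbol{\mathcal{Z}}_{\scriptstyle{\mathrm{ns}}}(\mathsf{coGer})
\end{array}
\]
whose left vertical is the identity, whose right vertical is $Q\boldsymbol{\mathcal{Z}}_{\scriptstyle{\mathrm{ns}}}(q)$, whose top row is the injective composite of Lemma~\ref{injectivebraids}, and whose bottom row is the composite of the present statement. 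So it suffices to show that $\ker\!\big(Q\boldsymbol{\mathcal{Z}}_{\scriptstyle{\mathrm{ns}}}(q)\big)$ meets the image of the top row only in $0$.

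For this I would use two gradings. Both $C(\mathfrak{t}')$ and $\mathsf{coGer}$ carry a weight grading --- on $\mathsf{coGer}$ by the number of generators $\omega_{ij}$, on $C(\mathfrak{t}')$ by (Chevalley tensor-length)$+$(bracket-length) --- which is preserved by the differentials, the cooperadic cocompositions, the pullbacks and the product-map relations, and which $q$ respects; hence every object and every arrow above is weight-graded, and it is enough to prove injectivity of the bottom row in each fixed weight $w$, where all the spaces are finite-dimensional. In weight $w$ I would filter $\mathfrak{Z}_{\scriptstyle{\mathrm{ns}}}$ by the Chevalley tensor-length: the internal (Chevalley--Eilenberg) part of the differential strictly raises it, while the remaining (cooperadic) part, the cobracket, and every shuffle $\alpha\,\sha_{\iota,\iota'}\,\alpha'$ of two augmentation-ideal elements have tensor-length $\geq2$. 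The proof of Lemma~\ref{injectivebraids} is precisely the statement that a nonzero class of $\mathfrak{grt}_1'\oplus\mathbb{Q}x^{01}$ has nonzero image in the tensor-length-$1$ part of the associated graded, where it is detected by the complex $\tau^{\leq0}(CH(\mathfrak{t}')[-1])$ of the corollary preceding Lemma~\ref{injectivebraids}; running the same tensor-length spectral sequence for $\mathsf{coGer}$ and using that $\mathfrak{e}(q)$ is a filtered quasi-isomorphism, one concludes that the image of such a class in $Q\boldsymbol{\mathcal{Z}}_{\scriptstyle{\mathrm{ns}}}(\mathsf{coGer})$ cannot be written as a sum of classes of shuffle products, i.e.\ is nonzero.

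The hard part is this last step. Unlike in $C(\mathfrak{t}')$, the Grothendieck--Teichm\"uller classes are not representable by tensor-length-$1$ cocycles in $\mathfrak{e}(\mathsf{coGer})$ --- e.g.\ $x^{01}$ is represented there by $\omega_{13}\omega_{24}$, of tensor-length $2$ --- so there is no naive ``projection to the linear part'' of $\mathsf{coGer}$ detecting them, and indeed $\mathfrak{Z}_{\scriptstyle{\mathrm{ns}}}(q)$ is \emph{not} a quasi-isomorphism: the map it induces on degree-$0$ cohomology has a kernel coming from the positive-bracket-length elements annihilated by $q$. Consequently the reduction to Lemma~\ref{injectivebraids} cannot be effected by a single homotopy-invariance argument; the actual work is to organize the tensor-length filtration so that the detection of $\mathfrak{grt}_1'\oplus\mathbb{Q}x^{01}$ inside $Q\boldsymbol{\mathcal{Z}}_{\scriptstyle{\mathrm{ns}}}(C(\mathfrak{t}'))$ survives passage to $Q\boldsymbol{\mathcal{Z}}_{\scriptstyle{\mathrm{ns}}}(\mathsf{coGer})$, i.e.\ to control exactly how the positive-length part killed by $q$ is absorbed into the product-map relations upon taking indecomposables.
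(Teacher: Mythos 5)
Your reduction is set up correctly: the commutative square induced by the quasi-isomorphism $q\colon C(\mathfrak{t}')\to\mathsf{coGer}$, together with Lemma~\ref{injectivebraids}, does reduce the theorem to showing that the image of $\mathfrak{grt}_1'\oplus\mathbb{Q}x^{01}$ in $Q\boldsymbol{\mathcal{Z}}_{\scriptstyle{\mathrm{ns}}}(C(\mathfrak{t}'))$ meets $\ker\bigl(Q\boldsymbol{\mathcal{Z}}_{\scriptstyle{\mathrm{ns}}}(q)\bigr)$ trivially, and you are right that $\mathfrak{Z}_{\scriptstyle{\mathrm{ns}}}(q)$ is not a quasi-isomorphism, so no one-line homotopy-invariance argument is available. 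But that reduction is the easy half, and the proposal stops exactly where the proof has to begin: you yourself write that ``the actual work is to organize the tensor-length filtration so that the detection \dots survives,'' without carrying out that work. The sketched spectral-sequence strategy is also doubtful as stated: within a fixed weight $w$, every element of $\mathsf{coGer}(n)$ is a product of exactly $w$ generators, so the tensor-length filtration degenerates on the target and the claim that $\mathfrak{e}(q)$ is a \emph{filtered} quasi-isomorphism for this filtration is unsupported (and is essentially the whole difficulty restated). So there is a genuine gap.

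For comparison, the paper closes this gap by working upstairs rather than analyzing $\ker Q\boldsymbol{\mathcal{Z}}_{\scriptstyle{\mathrm{ns}}}(q)$: using the explicit basis of the Arnol'd algebra from Lemma~\ref{arnoldbasis} it lifts a degree-zero cocycle $\gamma\in\mathfrak{e}(\mathsf{coGer})$, together with any putative shuffle decomposition $\gamma=\gamma_1\,\sha_{\iota,\iota'}\,\gamma_2$, to elements of $\bigoplus_n\mathbb{Q}[t_{ij}^*][n-2]\subset\mathfrak{e}(C(\mathfrak{t}'))$; writes $\gamma_i=\alpha_i+d\beta_i$ and $\gamma=\alpha+d\beta$ with $\alpha$ having nontrivial projection to $\tau^{\leq 0}(CH(\mathfrak{t}')[-1])$ (Theorem~\ref{willwacherfurusho} and Corollary~\ref{representatives}); and concludes that $[\alpha]=[\alpha_1\,\sha_{\iota,\iota'}\,\alpha_2]$, contradicting Lemma~\ref{injectivebraids}. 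If you want to complete your argument, some concrete device of this kind --- a linear section of $q$ compatible enough with the shuffle products to transport a product decomposition back to $C(\mathfrak{t}')$ up to exact terms --- is what is missing.
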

\begin{proof}
The statement is deduced from the following diagram:
\[
\begin{tikzpicture}[descr/.style={fill=white,inner sep=2.5pt}]
    \matrix (m) [matrix of math nodes, row sep=2em,
    column sep=2.5em, text height=1.5ex, text depth=0.25ex]
    { \tau^{\leq 0}(\mathfrak{e}^+(\mathsf{coGer})[-1])\,\sha\,\tau^{\leq 0}(\mathfrak{e}^+(\mathsf{coGer})[-1])
    & \tau^{\leq 0}(\mathfrak{e}^+(\mathsf{coGer})[-1]) 
    & Q\mathfrak{Z}_{\scriptstyle{\mathrm{ns}}}(\mathsf{coGer}) \\
   \tau^{\leq 0}(\mathfrak{e}^+(C(\mathfrak{t}'))[-1])\,\sha\,\tau^{\leq 0}(\mathfrak{e}^+(C(\mathfrak{t}'))[-1])
    & \tau^{\leq 0}(\mathfrak{e}^+(C(\mathfrak{t}'))[-1]) 
    & Q\mathfrak{Z}_{\scriptstyle{\mathrm{ns}}}(C(\mathfrak{t}')) \\
    & \tau^{\leq 0}(CH(\mathfrak{t}')[-1]) & \\ };
    \path[->,font=\scriptsize]
    (m-1-1) edge node[auto] {} (m-1-2)
    (m-1-2) edge node[auto] {} (m-1-3)
    (m-2-1) edge node[auto,swap] {} (m-2-2)
            edge node[auto,swap] {} (m-1-1)
    (m-2-2) edge node[auto,swap] {} (m-2-3)
            edge node[auto,swap] {} (m-1-2)
            edge node[auto,swap] {} (m-3-2)
    (m-2-3) edge node[auto] {} (m-1-3);
\end{tikzpicture}
\]
Take a degree \(0\) element \(\gamma\) in the image of \(\mathfrak{e}(\mathsf{coGer})[-1]\to \tau^{\leq 0}(\mathfrak{e}(\mathsf{coGer})[-1])\), representing some nontrivial cohomology class in \(\mathfrak{e}(\mathsf{coGer})[-1]\). We may without harm identify it as an element in 
 \[
 \bigoplus_n \mathbb{Q}[t_{ij}^*][n-2]\subset \mathfrak{e}(C(\mathfrak{t}'))[-1],
 \]
the sum of the free graded commutative algebras generated by the degree \(1\) generators \(t^*_{ij}\) dual to the generators of the Lie algebras \(\mathfrak{t}_n\). The Arnol'd relations define projections \(\mathbb{Q}[t_{ij}^*]\to\mathsf{coGer}(n)\), sending \(t^*_{ij}\) to \(\omega_{ij}\). In \ref{arnoldbasis} we give an explicit recipe for inverting these projections, by writing elements of the Arnol'd algebra in a preferred basis. This cocycle \(\gamma\) is in the kernel of the projection down to \(\tau^{\leq 0}(CH(\mathfrak{t}')[-1])\), but must by \ref{willwacherfurusho} be cohomologous to some cocycle that is not, i.e., we can write \(\gamma=\alpha+d\beta\) (\(d=d_{CE}+d_{\varepsilon}\)) where \(\alpha\) has nontrivial projection to \(\tau^{\leq 0}(CH(\mathfrak{t}')[-1])\). (Compare with \ref{representatives}.) Assume to get a contradiction that \(\gamma=\gamma_1\,\sha_{\iota,\iota'}\,\gamma_2\) is a nontrivial product. We can repeat the argument and write \(\gamma_i=\alpha_i+d\beta_i\). But then
 \[
 \gamma = \alpha_1\,\sha_{\iota,\iota'}\,\alpha_2+d(\beta_1\,\sha_{\iota,\iota'}\,\alpha_2+d\beta_1\,\sha_{\iota,\iota'}\,\alpha_2+\beta_1\,\sha_{\iota,\iota'}\,d\beta_2),
 \]
which implies that the cohomology class defined by \(\alpha\) is the same as that defined by \(\alpha_1\,\sha_{\iota,\iota'}\,\alpha_2\). However, by the previous lemma, \ref{injectivebraids}, this is impossible.
\end{proof}
\subsection{Relation to period integrals on Brown's moduli spaces}
We begin this section with some recollections concerning Brown's moduli spaces \(M_{0,n+1}^{\delta}\), mostly borrowing from \cite{Brown09}. 

Define the open moduli space of \(n\)-pointed genus zero curves as the quotient manifold
 \[
 M_{0,n+1} := ((\mathbb{CP}^1)^{n+1}\setminus diagonals)/ PSL_2(\mathbb{C}).
 \]
It is an algebraic variety and the ring of functions has the following presentation. Define \(\chi_{n+1}\) to be the set of unordered pairs \(\{i,j\}\) of indices \(i,j\in [n+1]\) that are not consecutive modulo \(n+1\). We shall follow Brown and refer to \(\chi_{n+1}\) as the set of \textit{chords} on \([n+1]\), and to an element of this set as a chord. Given a chord \(\{i,j\}\in \chi_{n+1}\), let \(u_{ij}\) denote the cross-ratio
 \[
 u_{ij} := [i\,i+1\mid j+1\,j] := \frac{(z_i-z_{j+1})(z_{i+1}-z_j)}{(z_i-z_j)(z_{i+1}-z_{j+1})}.
 \]
It is well-defined as a function on \(M_{0,n+1}\). Considering \([n+1]\) as cyclically ordered in the natural way, any chord \(\{i,j\}\) will partition \([n+1]\setminus \{i,j\}\) into two connected components. Say that two chords \(\{i,j\}\) and \(\{k,l\}\) \textit{cross} if \(k\) and \(l\) belong two different connected components in the partition defined by \(\{i,j\}\). (This is obviously a symmetric condition in the sense that this is true if and only if \(i\) and \(j\) lie in different connected components of the partition defined by \(\{k,l\}\).) Given a subset \(A\subset \chi_{n+1}\), let \(A^{\bot}\) denote the set of chords that cross every chord in \(A\), and say that two subsets \(A,B\subset\chi_{n+1}\) \textit{cross completely} if \(A^{\bot}=B\) and \(B^{\bot}=A\). One can then argue that the ring of functions on the moduli space is
 \[
 \mathcal{O}(M_{0,n+1})= \mathbb{Q}[u_{ij}, u_{ij}^{-1} \mid \{i,j\}\in\chi_{n+1}] / \langle R\rangle,
 \]
where \(R\) is the spanned by all elements
 \[
 1-\prod_{\{i,j\}\in A} u_{ij}-\prod_{\{k,l\}\in B} u_{kl},
 \]
labeled by pairs of completely crossing subsets \(A,B\subset\chi_{n+1}\). This leads to a description of the cohomology algebra \(H(M_{0,n+1})\) as the graded commutative algebra generated by the degree \(1\) elements 
 \[
\alpha_{ij}:=d\log u_{ij},\; \{i,j\}\in\chi_{n+1},
 \]
modulo relations saying that
 \[
 \bigl( \sum_{\{i,j\}\in A} \alpha_{ij} \bigr)\bigl(\sum_{\{k,l\}\in B} \alpha_{kl}\bigr)=0
 \]
for all pairs of completely crossing subsets \(A,B\subset\chi_{n+1}\). By fixing the point \(z_{n+1}\) to lie at \(\infty\), one obtains a presentation
 \[
 M_{0,n+1}=(\mathbb{C}^n\setminus diagonals)/\mathbb{C}\rtimes\mathbb{C}^{\times}
 \]
of the open moduli space as the base space of a circle fibration \(C_n(\mathbb{C})\to M_{0,n+1}\), and this leads to an alternative description of the cohomology algebra. Pullback along the fibration defines an inclusion from \(H(M_{0,n+1})\) into the Arnol'd algebra, mapping 
 \[
 \alpha_{ij} \mapsto \omega_{i\, j+1}+\omega_{i+1\, j}-\omega_{i+1\, j+1}-\omega_{ij},
 \]
with the provisio that \(\omega_{i\,n+1}=0\) for all \(i\). The image can be characterized as the kernel of the graded derivation 
 \[
 \iota_v = \sum_{i,j} \frac{\partial}{\partial\omega_{ij}} : \mathsf{coGer}(n)\to \mathsf{coGer}(n)[-1]
 \]
of the Arnol'd algebra. 

Brown's moduli space \(M_{0,n+1}^{\delta}\) is the variety
 \[
 M_{0,n+1}^{\delta} := \mathrm{Spec}\,\mathbb{Q}[u_{ij}\mid \{i,j\}\in\chi_{n+1}] /\langle R\rangle,
 \]
where \(R\) is the same set of relations as that defining the open moduli space. It's cohomology algebra, which we will denote \(A(M^{\delta}_{0,n+1})\) is a lot more subtle to describe explicitly than that of the open moduli space, but it can be described as the subalgebra of \(H(M_{0,n+1})\) spanned by those linear combinations of monomials in the \(\alpha_{ij}\)'s that have vanishing residue along all complex codimension one boundary strata \(D\in M_{0,n+1}^{\delta}\setminus M_{0,n+1}\). Each such divisor has the form \(M_{0,n-k+2}^{\delta}\times M_{0,k+1}^{\delta}\), and corresponds to the collapse of all the points \((z_i)_{i\in S}\) in a cyclically consecutive subset \(S\subset [n+1]\) of cardinality \(\#S=k+1\). Inclusions of strata,
 \[
 M_{0,n-k+2}^{\delta}\times M_{0,k+1}^{\delta} \to M_{0,n+1}^{\delta},
 \]
define a nonsymmetric operad structure on \(M^{\delta}_0\), hence a nonsymmetric cooperad structure on \(A(M^{\delta}_0)\). Adding pullbacks along point-forgetting projections defines an ns DGCA cooperad with pullbacks and augmentations.
\begin{lemma}
The inclusions \(A(M^{\delta}_{0,n+1})\subset H(M_{0,n+1})\to \mathsf{coGer}(n)\) are a morphism of ns DGCA cooperads with pullbacks and augmentations.
\end{lemma}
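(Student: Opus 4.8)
The plan is to check that the composite
\[
\phi_n\colon A(M^{\delta}_{0,n+1}) = H(M^{\delta}_{0,n+1}) \hookrightarrow H(M_{0,n+1}) \longrightarrow H(C_n(\mathbb{C})) = \mathsf{coGer}(n),
\]
in which the first arrow is restriction of classes to the open part and the second is pullback along the circle fibration \(C_n(\mathbb{C})\to M_{0,n+1}\), preserves the four pieces of structure carried by a morphism in \(\mathsf{coOp}_{\scriptstyle{\mathrm{ns}}}^{\pi}(\boldsymbol{\varepsilon}\mathsf{DGCA})\): the graded commutative algebra structure, the augmentation, the pullback maps, and the cocompositions. The first two are immediate. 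Each \(\phi_n\) is a composite of a subalgebra inclusion with a pullback ring homomorphism, hence a morphism of graded commutative \(\mathbb{Q}\)-algebras with zero differential; and the augmentation in both cooperads is the projection onto \(H^0=\mathbb{Q}\), which \(\phi_n\) respects because it preserves cohomological degree. In particular on arity \(2\) the map \(\phi_2\) is the unit inclusion \(\mathbb{Q}=A(M^{\delta}_{0,3})\hookrightarrow\mathbb{Q}\oplus\mathbb{Q}\omega_{12}=\mathsf{coGer}(2)\), visibly compatible with \(\varepsilon(\omega_{12})=0\).

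For the pullback maps: on \(A(M^{\delta}_0)\) they are induced by the point-forgetting projections \(M^{\delta}_{0,n+1}\to M^{\delta}_{0,k+1}\), while on \(\mathsf{coGer}=H(C(\mathfrak{t}'))\) the pullback \(\pi^*_{\iota}\) along an injection \(\iota\colon[k]\to[n]\) is, by the description recalled above, the algebra map \(\omega_{ab}\mapsto\omega_{\iota(a)\iota(b)}\); writing \(\omega_{ab}=d\arg(z_a-z_b)\) one identifies this with pullback along the point-forgetting map \(C_n(\mathbb{C})\to C_k(\mathbb{C})\). Since the circle fibrations commute with forgetting points — the forgotten points lie among \(z_1,\dots,z_n\), never the point \(z_{n+1}=\infty\) used to trivialise the fibration — and the restrictions \(H(M^{\delta}_{0,n+1})\hookrightarrow H(M_{0,n+1})\) are natural for forgetting points, the squares relating \(\phi_k\), \(\phi_n\) and the two families of pullbacks commute.

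The substantive point is compatibility with the cocompositions. On \(\mathsf{coGer}(n)=H(\overline{C}_n(\mathbb{C}))\), the cocomposition inserting arity \(k\) into slot \(i\) is restriction to the Fulton--MacPherson stratum \(\overline{C}_{n-k+1}(\mathbb{C})\times\overline{C}_k(\mathbb{C})\hookrightarrow\overline{C}_n(\mathbb{C})\) followed by the K\"unneth isomorphism; dually to the operad \(\mathfrak{t}\), on a generator it is the algebra map extending \(\omega_{ab}\mapsto\pi^*_f(\omega_{ab})\otimes 1+1\otimes\kappa^*_g(\omega_{ab})\), that is, \(1\otimes\omega_{a-i+1,\,b-i+1}\) when \(a\) and \(b\) both lie in the inserted block \(\{i,\dots,i+k-1\}\), \(\omega_{\bar a,\bar b}\otimes 1\) when both lie outside it, and \(\omega_{\bar a,i}\otimes 1\) or \(\omega_{i,\bar b}\otimes 1\) in the mixed case. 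On \(A(M^{\delta}_0)=H(M^{\delta}_0)\) the cocomposition is pullback along the boundary divisor \(M^{\delta}_{0,n-k+2}\times M^{\delta}_{0,k+1}\hookrightarrow M^{\delta}_{0,n+1}\) obtained by collapsing a cyclically consecutive block \(S\); by Brown's analysis of this degeneration each cross-ratio \(u_{ij}\) either tends to \(1\) along the divisor — so that \(d\log u_{ij}\) restricts to \(0\) — or restricts to a cross-ratio on one of the two factors, according to whether the chord \(\{i,j\}\) crosses the chord cutting out \(S\) or lies to one side of it; this is the facet structure of the associahedron. Since all the maps in sight are algebra homomorphisms, compatibility need only be checked on a generating set of \(A(M^{\delta}_{0,n+1})\) — for which one uses Brown's explicit presentation — and the computation reduces to testing the square on the images under \(\phi_n\) of the residue-vanishing degree-one classes, which are combinations of the \(\alpha_{ij}=\omega_{i,j+1}+\omega_{i+1,j}-\omega_{i+1,j+1}-\omega_{ij}\) (with \(\omega_{a\,n+1}=0\)), matching Brown's cross-ratio degeneration against the little-disks formula above, case by case on the position of \(\{i,j\}\) relative to \(S\). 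Conceptually this is the assertion that the standard associahedron \(K_n\), with its operadic gluing along facets, sits compatibly inside both \(M^{\delta}_{0,n+1}(\mathbb{R})\) and \(\overline{C}_n(\mathbb{C})\) and that the circle fibration intertwines the two boundary-restriction maps — the geometric input of Brown \cite{Brown09} and of Brown, Carr and Schneps \cite{BrownCarrSchneps09}, which we invoke.

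The main obstacle is precisely this last reconciliation: matching the combinatorics of collapsing cyclically consecutive blocks of marked points on the projective line — Brown's operadic divisors, indexed by chords, with their cross-ratio degenerations — with the combinatorics of inserting a little disk — the little-disks cocomposition on the Arnol'd generators \(\omega_{ij}\) — through the asymmetric dictionary \(\alpha_{ij}\mapsto\omega_{i,j+1}+\omega_{i+1,j}-\omega_{i+1,j+1}-\omega_{ij}\) with the convention \(\omega_{a\,n+1}=0\), while keeping track of which \(\alpha_{ij}\)-combinations lie in \(A(M^{\delta}_0)\) and of any higher-degree algebra generators. Everything else is routine.
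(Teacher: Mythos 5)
Your route is genuinely different from the paper's. The paper sidesteps the entire case analysis you describe by a geometric device: it introduces a partial Fulton--MacPherson compactification \(C^{\delta}_n(\mathbb{C})\) of the configuration space, obtained by adding only the strata labelled by \emph{consecutive} subsets \(S\subset[n]\). These spaces still have the homotopy type of \(C_n(\mathbb{C})\) but assemble into a nonsymmetric operad, and the circle fibrations \(C_n(\mathbb{C})\to M_{0,n+1}\) extend over the added strata to a morphism of ns operads \(C^{\delta}(\mathbb{C})\to M^{\delta}_0\) compatible with the point-forgetting maps. Pullback in cohomology along a single morphism of ns operads of spaces is automatically a morphism of ns DGCA cooperads with pullbacks and augmentations, so all four compatibilities you check separately come for free. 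What your approach buys instead is explicitness: the paper itself acknowledges that ``a direct algebraic proof'' is possible, and yours is that proof in outline, with the correct identification of both cocompositions (the little-disks formula on the \(\omega_{ab}\) and Brown's cross-ratio degenerations on the \(\alpha_{ij}\)) and of where the difficulty sits.

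That said, as written your argument has a gap at exactly the point you flag as ``the main obstacle'': the case-by-case matching of the two cocomposition formulas through the dictionary \(\alpha_{ij}\mapsto\omega_{i\,j+1}+\omega_{i+1\,j}-\omega_{i+1\,j+1}-\omega_{ij}\) is described but never performed, and it is the entire content of the lemma --- invoking \cite{Brown09} and \cite{BrownCarrSchneps09} for ``the geometric input'' is really just invoking the conceptual argument the paper uses, at which point the generator-by-generator setup is doing no work. There is also a secondary point you hedge on: reducing the check to degree-one classes requires knowing that \(A(M^{\delta}_{0,n+1})\) is generated in degree one as an algebra (true by Brown's quadratic presentation of \(H(M^{\delta}_{0,n+1})\), but it must be stated and cited, since \(A(M^{\delta}_{0,n+1})\) is defined here only as a residue-vanishing subalgebra of \(H(M_{0,n+1})\)). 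If you want the direct proof, carry out the three cases (chord \(\{i,j\}\) inside the collapsed block, outside it, or crossing it) for a single \(\alpha_{ij}\); otherwise, adopt the partial compactification \(C^{\delta}_n(\mathbb{C})\) and let functoriality do the bookkeeping.
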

\begin{proof}
One can either do a direct algebraic proof or rely on a more conceptual argument, as follows. The configuration spaces \(C_n(\mathbb{C})\) have a well-known (real) Fulton-MacPherson compactification \(\overline{C}_n(\mathbb{C})\), by systematically adding strata corresponding to collapses of points labeled by subsets \(S\subset [n]\). These compactifications do not change the cohomology, because a manifold with boundary is always homotopy equivalent to its interior, but have the nice feature of making \(\overline{C}(\mathbb{C})\) into a (symmetric) operad. One can instead choose to only add those strata that correspond to consecutive subsets, and obtain partial compactifications \(C^{\delta}_n(\mathbb{C})\). These will also have the same cohomology as the uncompactified space, but can only be assembled to a nonsymmetric operad. Moreover, the projections \(C_n(\mathbb{C})\to M_{0,n+1}\) can be extended to the boundary strata to define projections \(C^{\delta}_n(\mathbb{C})\to M^{\delta}_{0,n+1}\), defining a morphism of nonsymmetric operads. Pullback along these projections is the suggested morphism of cooperads.
\end{proof}
\begin{corollary}
There is a canonical morphism of algebras \(\mathfrak{Z}_{\scriptstyle{\mathrm{ns}}}(A(\mathfrak{M}^{\delta}_0))\to \mathfrak{Z}_{\scriptstyle{\mathrm{ns}}}(\mathsf{coGer})\). 
\end{corollary}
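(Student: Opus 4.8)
The plan is simply to invoke functoriality of $\mathfrak{Z}_{\scriptstyle{\mathrm{ns}}}$. Recall that the remark following the definition of $\mathfrak{Z}_{\scriptstyle{\mathrm{ns}}}$ records that it is a functor
\[
\mathfrak{Z}_{\scriptstyle{\mathrm{ns}}}:\mathsf{coOp}_{\scriptstyle{\mathrm{ns}}}^{\pi}(\boldsymbol{\varepsilon}\mathsf{DGCA}) \to \boldsymbol{\varepsilon}\mathsf{DGCA},
\]
and both $A(M^{\delta}_0)$ and $\mathsf{coGer}$ are objects of the source category: the former was exhibited as an ns DGCA cooperad with pullbacks and augmentations in the discussion of Brown's moduli spaces, and the latter carries the same structure via the Arnol'd presentation and the pullback maps obtained by adding white vertices. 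The preceding lemma provides a morphism $A(M^{\delta}_0)\to\mathsf{coGer}$ in this very category, namely the arity-wise map $A(M^{\delta}_{0,n+1})\subset H(M_{0,n+1})\to\mathsf{coGer}(n)$ sending $\alpha_{ij}\mapsto \omega_{i\,j+1}+\omega_{i+1\,j}-\omega_{i+1\,j+1}-\omega_{ij}$ (with $\omega_{i\,n+1}=0$). Applying $\mathfrak{Z}_{\scriptstyle{\mathrm{ns}}}$ to this morphism yields the asserted morphism of augmented DGCAs.

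First I would recall explicitly what it means for the lemma's map to be a morphism in $\mathsf{coOp}_{\scriptstyle{\mathrm{ns}}}^{\pi}(\boldsymbol{\varepsilon}\mathsf{DGCA})$: it commutes with the (vanishing) differentials, with the commutative products on each arity, with the cooperadic cocompositions, with the pullback maps $\pi^{*}_{\iota}$, and with the augmentations. Given that, the induced map on the cofree DGCAs $C(\tau^{\leq -1}(\mathrm{Tot}^{\oplus}(-)\{-1\}))$ is immediate from functoriality of $\tau^{\leq -1}$, of $\mathrm{Tot}^{\oplus}(-)\{-1\}$ (which is a morphism of dg Lie coalgebras because the cobracket is built from the cocompositions, which are respected), and of $C(-)$. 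The remaining point is that this map descends to the quotients by the product map relations. But since the morphism intertwines cocompositions, pullbacks and products, it carries each generator $\alpha\cdot\alpha' - \alpha\,\sha_{\iota,\iota'}\,\alpha'$ of the relation ideal on the source to the corresponding generator on the target; hence it passes to the quotient. This is exactly the functoriality already recorded, so no further work is needed.

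I do not expect any genuine obstacle here: the corollary is a one-line consequence of functoriality once the previous lemma is in hand, and the one substantive ingredient — that the arity-wise maps assemble into a morphism of ns DGCA cooperads with pullbacks and augmentations — has already been established there, either by the direct algebraic check or by the geometric argument using the partial Fulton--MacPherson compactifications $C^{\delta}_n(\mathbb{C})$ and the projections $C^{\delta}_n(\mathbb{C})\to M^{\delta}_{0,n+1}$. If one wished to make the statement self-contained, the only thing to emphasize is the compatibility of the Arnol'd projection with pullbacks, which follows from the fact that forgetting a marked point on $M^{\delta}_{0,n+1}$ corresponds, on the level of configuration spaces, to forgetting (rather than inserting) a point, exactly as in the cited lemma on the operad $\mathfrak{t}$ and its Corollary.
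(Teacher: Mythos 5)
Your proposal is correct and matches the paper's (implicit) argument exactly: the corollary is stated without proof precisely because it is the image under the functor \(\mathfrak{Z}_{\scriptstyle{\mathrm{ns}}}\) of the morphism of ns DGCA cooperads with pullbacks and augmentations established in the preceding lemma. Your additional unpacking of why the induced map descends through the product map relations is a fair elaboration of the functoriality already recorded in the paper.
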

We will prove that this morphism is a surjection \(Q\boldsymbol{\mathcal{Z}}_{\scriptstyle{\mathrm{ns}}}(A(M^{\delta}_0)) \to \boldsymbol{\mathcal{Z}}_{\scriptstyle{\mathrm{ns}}}(\mathsf{coGer})\) on indecomposable cohomological weights. To do this we first prove some structural results about the Arnol'd algebra.
\begin{definition}
First, for an ordered set \(S\), let \(\tilde{G}(S)^k\) denote the set of sets \(\{(i_1,j_j),\dots,(i_k,j_k)\}\) of \(k\) pairs \((i_r,j_r)\) of elements in \(S\) satisfying \(i_r<j_r\in S\) and such that no two pairs are equal. We refer to \(\tilde{G}(S)^k\) as the set of length \(k\) monomials. Then, define \(G(S)^k\) to be the subset consisting of those monomials \(M\in\tilde{G}(S)^k\) with the property that there are no two \((i,j), (j,k)\in M\). Let \(\tilde{G}(S):=\bigcup_{k=1}^{n-1}G(S)^k\), for \(n:=\#S\), and define \(G(S)\) analogously.
\end{definition}
\begin{lemma}\label{arnoldbasis}
The function \(\omega:G(n)^k\to \mathsf{coGer}(n)^k\) that sends \(\{(i_1,j_j),\dots,(i_k,j_k)\}\) to the monomial \(\omega_{i_1j_1}\dots\omega_{i_kj_k}\) identifies \(G(n)^k\) as a basis of \(\mathsf{coGer}(n)^k\).
\end{lemma}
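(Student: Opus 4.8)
The statement is a combinatorial‑basis result for the Orlik--Solomon (Arnol'd) algebra $\mathsf{coGer}(n)$ of the braid arrangement, and the plan is to prove it in the two customary steps: first that the monomials $\omega(M)$, $M\in G(n)^k$, span $\mathsf{coGer}(n)^k$, and then that they are linearly independent. For spanning I would run a straightening (normal--form) algorithm. Fix a total order $\prec$ on the generators $\omega_{ij}$ ($i<j$) and extend it to a term order on monomials, say degree first and then lexicographically on the $\prec$‑sorted list of factors. Each Arnol'd relation $\omega_{ij}\omega_{jk}+\omega_{jk}\omega_{ik}+\omega_{ik}\omega_{ij}=0$ ($i<j<k$) then becomes a rewrite rule replacing its $\prec$‑leading degree‑two monomial by a $\mathbb{Q}$‑combination of the two smaller ones; I would show that a monomial fails to be of the form $\omega(M)$ with $M\in G(n)^k$ exactly when it contains a sub‑monomial to which some such rule applies, and that iterating the rules terminates because each application strictly lowers the term order. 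This yields $\mathrm{span}\{\omega(M):M\in G(n)^k\}=\mathsf{coGer}(n)^k$. The real content here is to choose $\prec$ so that the ``normal forms'' of the rewriting are \emph{precisely} the $\omega(M)$ with $M\in G(n)^k$ --- equivalently, so that $G(n)^k$ is the set of no‑broken‑circuit monomials for $\prec$ and the oriented Arnol'd relations form a Gr\"obner basis of the Arnol'd ideal. I expect this matching, together with the termination bookkeeping, to be the main obstacle.

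For linear independence I see two routes. The first is to verify that the rewriting of the previous step is locally confluent --- the ambiguities to resolve are the overlap configurations $\omega_{ij}\omega_{jk}\omega_{kl}$ ($i<j<k<l$) together with the inclusion ambiguities coming from $\omega_{ij}^{2}=0$, a finite sign‑tracking check --- and then invoke Newman's (diamond) lemma: a terminating, locally confluent rewriting has unique normal forms, so no nontrivial $\mathbb{Q}$‑linear relation among the already‑normal monomials $\omega(M)$ can hold. This is in essence the Orlik--Solomon broken‑circuit theorem specialised to $\prec$, which one could instead simply cite. The second route is a dimension count: since $\mathsf{coGer}(n)\cong H^{\bullet}(C_n(\mathbb{C}))$, as recorded earlier, its Poincar\'e series is the classical $\prod_{i=1}^{n-1}(1+it)$, so $\dim_{\mathbb{Q}}\mathsf{coGer}(n)^{k}=[t^{k}]\prod_{i=1}^{n-1}(1+it)$; a direct combinatorial argument --- most conveniently the recursion obtained by separating off the edges incident to the extremal vertex $n$ --- shows $\#G(n)^{k}$ equals the same number, and a spanning set of the correct cardinality in a space of that dimension is automatically a basis.

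A fully self‑contained variant of the second route would be to induct on $n$ through the cooperad structure: pullback along the point‑forgetting projection gives an inclusion $\mathsf{coGer}(n-1)\hookrightarrow\mathsf{coGer}(n)$ making $\mathsf{coGer}(n)$ a free $\mathsf{coGer}(n-1)$‑module on $1,\omega_{1n},\dots,\omega_{n-1\,n}$ (Leray--Hirsch for the fibration $C_n\to C_{n-1}$, or Orlik--Solomon deletion--restriction), and one checks that $G(n)^{k}$ decomposes compatibly with this decomposition according to which edge (if any) at vertex $n$ is used, so that the case $n-1$ yields the case $n$. In all three approaches the crux is one and the same purely combinatorial point: an exact understanding of which monomials the defining condition of $G(n)^{k}$ excludes, matched against the straightening, against the count, or against the module basis. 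That is the step I would expect to require the most care, and I would settle the precise formulation of $\prec$ (or of the recursion) before attempting the rest.
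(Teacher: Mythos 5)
Your spanning step is essentially the paper's own argument: the paper also proves spanning by repeatedly applying the Arnol'd relation to destroy the configurations \((i,j),(j,k)\), organizing termination not by a monomial term order but by a decreasing filtration indexed by a combinatorial ``index'' attached to path‑connected pairs. Note, however, that the paper's proof stops there --- it concludes only that \(\omega(G(n))\) \emph{spans} and says nothing about linear independence --- so the second half of your plan has no counterpart in the paper.

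That second half is where the proposal breaks, and it breaks exactly at the point you flagged as the crux. The set \(G(n)^k\) as defined (exclude only the patterns \((i,j),(j,k)\) with \(i<j<k\)) is \emph{not} the no‑broken‑circuit set of any ordering of the generators, and its cardinality is \emph{not} \([t^k]\prod_{i=1}^{n-1}(1+it)\). Concretely, for \(n=4\), \(k=3\) the monomial \(M=\{(1,3),(2,3),(2,4)\}\) contains no pair of the form \((i,j),(j,k)\), so \(M\in G(4)^3\), and a direct enumeration gives \(\#G(4)^3=7\) while \(\dim\mathsf{coGer}(4)^3=6\). Indeed \(\omega(M)=\omega_{13}\omega_{23}\omega_{24}\) is the broken circuit obtained from the \(4\)-cycle on \(1,3,2,4\) by deleting its edge \((1,4)\), and the Orlik--Solomon circuit relation for that cycle (a consequence of the Arnol'd relations) expresses it as a signed sum of \(\omega_{13}\omega_{23}\omega_{14}\), \(\omega_{13}\omega_{24}\omega_{14}\) and \(\omega_{23}\omega_{24}\omega_{14}\), all of which also lie in \(\omega(G(4)^3)\): an explicit linear dependence. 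Since every nbc basis of \(\mathsf{coGer}(4)^3\) has exactly \(6\) elements whatever the order \(\prec\), no choice of \(\prec\) can make \(G(n)^k\) the set of normal forms of a confluent rewriting, the dimension count cannot come out right, and the Leray--Hirsch route also fails (its bookkeeping presumes each \(M\in G(n)^k\) uses at most one edge at the vertex \(n\), yet \(\{(1,4),(2,4),(3,4)\}\in G(4)^3\)). The underlying defect is that the condition defining \(G\) excludes only the broken circuits of triangles, whereas the braid matroid has minimal broken circuits coming from longer cycles. So the obstacle is not one of bookkeeping: with \(G(n)^k\) as literally defined the statement is a spanning statement, not a basis statement, and the definition must be strengthened before any of your three routes (or the paper's) can produce linear independence.
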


\begin{proof}
The proof is parallel to the construction of a basis for \(\mathsf{Lie}(n)\), given in \cite{SalvatoreTauraso09} and is based on repeated use of the Arnol'd relation. First of all, it is clear that \(\omega(\tilde{G}(n)^k)\) spans the degree \(k\) summand of the Arnol'd algebra. Say that \((i,j)\) is path-connected of length \(q+1\) in a monomial \(M\) if there are \((i,r_1),(r_1,r_2),\dots ,(r_q,j)\in M\), and say that \((i,j)\) has index \(p\) in a monomial \(M\in G(n)\) if \((i,j)\) is path-connected in \(M\) and there are exactly \(p+1\) other pairs \((r,s)\in M\) such that \(r\leq i\) and \(j\leq s\). For example, if \((1,n)\in M\), then it must necessarily have index \(1\). This allows us to put a decreasing filtration on the Arnol'd algebra, by letting \(F^p\mathsf{coGer}(n)\) be spanned by monomials \(\omega(M)\) with the property that for all connected \((i,j)\) in \(M\) of index \(p\), the restriction of \(M\) to \(\tilde{G}(\{i,\dots ,j\})\) lies in \(G(\{i,\dots ,j\})\). Then \(F^0\mathsf{coGer}(n)=\mathsf{coGer}(n)\). We claim that the Arnol'd relation implies \(F^1\mathsf{coGer}(n)=\mathsf{coGer}(n)\). To see this, note that
 \[
  \omega_{ir_1}\omega_{r_1r_2}\dots\omega_{r_qj}=(\omega_{ir_1}\omega_{ir_2}-\omega_{r_1r_2}\omega_{ir_2})\omega_{r_2r_3}\dots\omega_{r_qj}.
 \]
After this \((i,j)\) is path-connected of length \(q-1\), and the restriction to \(\{i,\dots , r_2\}\) will lie in \(G(\{i,\dots , r_2\})\). Iterating we can reduce to a monomial in \(G(\{i,\dots ,j\})\). Arguing inductively on the filtration degree \(p\), we conclude that the monomials in \(G(n)\) span the Arnol'd algebra.
\end{proof}

\begin{definition}
Define \(L(n)\) to be the set of iterated formal binary bracketings of the indicies \(1,\dots , n\), subject to the following conditions:
 \begin{itemize}
 \item[-] Each index appears exactly once. (Thus the word must be an iteration of \(n-1\) binary brackets.)
 \item[-] The smallest index in a bracket stands to the left and the largest to the right.
 \end{itemize}
\end{definition}
For example, \([1,[2,3]]\) and \([[1,2],3]\) both lie in \(L(3)\), but neither \([2,[1,3]]\) nor \([[1,3],2]\) does. For each \(L\in L(n)\), define an ordering on the bracketings in \(L\) by reading them outside in and left to right. For example, the first bracket in \([[1,3],[2,4]]\) is that between \([1,3]\) and \([2,4]\), the second that between \(1\) and \(3\) and the third that between \(2\) and \(4\). To each bracket, associate the pair \((i,j)\), where \(i\) is the smallest index appearing in the bracket and \(j\) is the largest. In this way, we associate to each \(L\) a monomial \(M_L=\{(i_1,j_r),\dots ,(i_{n-1},j_{n-1})\}\). For example, if \(L=[[1,3],[2,4]]\), then \(M_L=\{(1,4),(1,3),(2,4)\}\).
\begin{remark}
The association \(L\mapsto M_L\) is a bijection from \(L(n)\) to \(G(n)^{n-1}\). The map is clearly injective and a cardinality count implies that it must be surjective. This witnesses the fact that \(\mathsf{coGer}(n)^{n-1}\cong \mathsf{coLie}(n)\).
\end{remark}
Identify \(C_n(\mathbb{C})\) with the subspace of \(\mathbb{C}^n\) consisting of all \((z_i)_{i=1}^n\) such that \(z_1=0\), \(\vert z_n\vert=1\) and \(z_i\neq z_j\) if the indicies are different. The projection \(C_n(\mathbb{C})\to M_{0,n+1}\) has a section which can be decribed by identifying \(M_{0,n+1}\) with the subspace of \(C_n(\mathbb{C})\) consisting of those \(n\)-tuples that in addition satisfy \(z_n=1\). With these identifications we obtain a description of \(H(M_{0,n+1})\) as the subalgebra of the Arnol'd algebra spanned by all \(\omega_{ij}\) except \(\omega_{1n}\), so, in effect
 \[
  \mathsf{coGer}=H(M_{0,n+1})[\omega_1n]=H(M_{0,n+1})\oplus \omega_{1n}H(M_{0,n+1}).
 \]
\begin{remark}
It was shown by Ezra Getzler in \cite{Getzler95} that \(H^{n-2}(M_{0,n+1})\cong \mathsf{coLie}(n)\), and this isomorphism now takes the following form: For every \(L\in L(n)\), the Arnol'd form \(\omega_L:=\omega(M_L)\) is of the form \(\omega_{1n}\alpha_L\). Thus we obtain an isomorphism \(\alpha: L(n)\to H^{n-1}(M_{0,n+1}), L\to \alpha_L\). 
\end{remark}
\begin{definition}
Say that a binary bracket \(b\) (of bracketings) in an \(L\in L(n)\) is \textit{connected} if the set of indices appearing inside \(b\) is a connected subset of \([n]\). Define the set of prime bracketings, to be denoted \(P(n)\), to be the subset of \(L(n)\) consisting of all those \(P\) with the property that only the outermost bracket is connected.
\end{definition}
\begin{lemma}\cite{SalvatoreTauraso09}
The operad \(\mathsf{Lie}\) is freely generated as a nonsymmetric operad by the collection \(\{P(n)\}_{n\geq 2}\).
\end{lemma}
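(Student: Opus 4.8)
The plan is to exhibit an explicit ``prime factorization'' of the elements of $L(n)$ that identifies the free nonsymmetric operad on $\{P(n)\}_{n\geq 2}$ with $\mathsf{Lie}$. Write $\mathcal{F}$ for the free nonsymmetric operad on the collection $\{\mathbb{Q}P(n)\}_{n\geq 2}$ and $\phi\colon\mathcal{F}\to\mathsf{Lie}$ for the morphism determined by the inclusions $P(n)\subset L(n)\subset\mathsf{Lie}(n)$ (reading a bracketing as a Lie monomial). Recall that $\mathcal{F}(n)$ has a basis indexed by planar rooted trees with $n$ leaves whose internal vertices are decorated by prime bracketings, a vertex of arity $k$ by an element of $P(k)$, and that $\phi$ sends such a decorated tree to the iterated operadic composite of its decorations. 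I will set up a bijection between $L(n)$ and this set of decorated trees under which $\phi$ carries the decorated-tree basis of $\mathcal{F}(n)$ onto $L(n)\subset\mathsf{Lie}(n)$; since by \cite{SalvatoreTauraso09} (see also the remark preceding this lemma, identifying $L(n)$ with the basis $G(n)^{n-1}$ of $\mathsf{coGer}(n)^{n-1}\cong\mathsf{coLie}(n)$, together with the fact that Jacobi and antisymmetry let the $L$'s span $\mathsf{Lie}(n)$) the set $L(n)$ is a basis of $\mathsf{Lie}(n)$, this proves that $\phi$ is an isomorphism.

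The factorization is defined recursively. Call a bracket of $L\in L(n)$ \emph{connected} if the set of indices inside it is an interval of $[n]$; the outermost bracket is always connected. If $L$ has no other connected bracket, then $L$ is prime and we assign to it the $n$-ary corolla decorated by $L$. Otherwise let $v_1,\dots,v_k$ be the connected brackets, with index sets $S_{v_1},\dots,S_{v_k}$, that are maximal among those properly contained in the outermost one. The index sets of the brackets of a planar (binary) tree form a laminar family, so by maximality $S_{v_1},\dots,S_{v_k}$ are pairwise disjoint intervals. Collapse each $v_i$ to a single leaf and relabel the remaining $m$ leaves — the $k$ collapsed blocks together with the free leaves lying in no $v_i$ — in the induced order; this yields a bracketing $\bar L$. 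One checks that $\bar L$ still satisfies the min-left/max-right condition, hence $\bar L\in L(m)$, and that $\bar L$ is prime: re-expanding the blocks would turn any connected bracket of $\bar L$ other than the root into a connected bracket of $L$ nested strictly between some $v_i$ and the outermost bracket, contradicting maximality. We then assign to $L$ the tree whose root is the $m$-ary corolla decorated by $\bar L$, with the inputs corresponding to collapsed blocks carrying the trees recursively assigned to the sub-bracketings $L\vert_{v_i}$ (relabeled order-preservingly to $1,\dots,\#S_{v_i}$, which again lie in $L(\#S_{v_i})$ since the defining condition is inherited), and the other inputs being leaves. As $\#S_{v_i}<n$, the recursion terminates.

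This assignment has an evident inverse: evaluating a decorated tree by substituting the prime bracketings at its vertices produces a bracketing, which lies in $L(n)$ because substituting elements of $L$ into one another preserves the min-left/max-right condition, and whose maximal proper connected brackets are precisely the blocks of consecutive leaves occupied by the subtrees hanging off the root — here one uses that in a nonsymmetric operadic composition the leaves coming from one factor form a consecutive block, and that the root decoration is prime, so that no proper bracket of the ``root part'' is connected. Because operadic composition of Lie monomials is literally substitution of bracketings, $\phi$ applied to the tree assigned to $L$ re-assembles, step by step, the monomial $L$; hence $\phi$ maps the decorated-tree basis of $\mathcal{F}(n)$ bijectively onto the basis $L(n)$ of $\mathsf{Lie}(n)$, and is therefore an isomorphism of nonsymmetric operads.

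The substantive points are the two structural checks: that collapsing the maximal proper connected sub-brackets of an $L\in L(n)$ yields a prime bracketing again in $L(m)$, and, conversely, that the maximal proper connected sub-brackets of an evaluated decorated tree are exactly the images of its root subtrees. Both reduce to the laminarity of the family of index sets of the brackets of $L$ and to the fact that one factor's inputs occupy a consecutive block under nonsymmetric composition; this is where the combinatorial bookkeeping sits, but once it is done the rest is formal, given the cited description of $L(n)$ as a basis of $\mathsf{Lie}(n)$.
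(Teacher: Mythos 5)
The paper does not prove this lemma at all; it is imported verbatim from \cite{SalvatoreTauraso09}, so there is no internal argument to compare yours against. Your write-up is a correct reconstruction of the standard proof of that result: exhibit the basis $L(n)$ of $\mathsf{Lie}(n)$, define the prime factorization of a bracketing by collapsing its maximal proper connected sub-brackets, and check that this sets up a bijection between $L(n)$ and the decorated-tree basis of the free nonsymmetric operad on $\{P(n)\}$ compatible with $\phi$. The two substantive combinatorial checks you isolate (that the collapsed bracketing $\bar L$ is again in $L(m)$ and is prime, and that the maximal proper connected brackets of a composite are exactly the blocks of the root's nontrivial subtrees) are the right ones, and both do reduce to laminarity of the bracket family plus the fact that nonsymmetric composition inserts a consecutive block. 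One small imprecision: in arguing that $\bar L$ is prime you only treat the case where a putative connected bracket of $\bar L$ would expand to something strictly containing some $v_i$; you should also dispose of the case where it is disjoint from all $v_i$ (it would then itself be a maximal proper connected bracket of $L$, hence one of the $v_i$, hence collapsed --- contradiction). This is a one-line repair.

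The one place where your proof is not self-contained is the input that $L(n)$ is a basis of $\mathsf{Lie}(n)$: you cite \cite{SalvatoreTauraso09} and the remark preceding the lemma for it. That remark really only identifies $L(n)$ with the basis $G(n)^{n-1}$ of $\mathsf{coGer}(n)^{n-1}\cong\mathsf{coLie}(n)$; passing from a basis of the dual indexed by the same set to linear independence of the monomials $L$ in $\mathsf{Lie}(n)$ itself needs either a triangularity argument or the spanning-plus-cardinality count you allude to. Since the lemma is in any case attributed to \cite{SalvatoreTauraso09}, leaning on them for this ingredient is acceptable, but it means your argument proves ``freeness given the basis'' rather than the full statement from scratch. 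What your version buys over the bare citation is an explicit, checkable description of the prime factorization, which is exactly the combinatorial content used later in the surjectivity theorem for $Q\boldsymbol{\mathcal{Z}}_{\scriptstyle{\mathrm{ns}}}(A(M^{\delta}_0))\to Q\boldsymbol{\mathcal{Z}}_{\scriptstyle{\mathrm{ns}}}(\mathsf{coGer})$.
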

\begin{corollary}
The association \(\alpha:L(n)\to H(M_{0,n+1})\) of forms on the moduli space to Lie words restricts to an isomorphism \(\alpha: P(n)\to A(M^{\delta}_{0,n+1})^{n-2}\).
\end{corollary}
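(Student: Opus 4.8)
The plan is to identify $A(M^{\delta}_{0,n+1})^{n-2}$ with the space of indecomposables of the nonsymmetric operad $\mathsf{Lie}$ and then to recognise $\alpha|_{P(n)}$ as the corresponding basis. Recall from the discussion above that $A(M^{\delta}_{0,n+1})^{n-2}$ is the subspace of $H^{n-2}(M_{0,n+1})\cong\mathsf{coLie}(n)$ consisting of the classes whose residue vanishes along every $\delta$-boundary divisor $D_S\cong M^{\delta}_{0,a}\times M^{\delta}_{0,b}$, with $S$ running over the cyclically consecutive subsets of $[n+1]$. Since each $D_S$ has $\dim D_S=\dim M_{0,n+1}-1$, the Künneth formula forces its top-degree cohomology to be the tensor product of the top-degree cohomologies of its two factors; so, applying Getzler's theorem again (\cite{Getzler95}) to each factor, the residue map $H^{n-2}(M_{0,n+1})\to H^{\dim D_S}(D_S)$ becomes in these top degrees a cocomposition
\[
 \gamma^{\vee}_S:\ \mathsf{coLie}(n)\ \longrightarrow\ \mathsf{coLie}(p)\otimes\mathsf{coLie}(q),\qquad p+q=n+1,
\]
of the nonsymmetric cooperad $\mathsf{coLie}$ dual to $\mathsf{Lie}$, the slot being the one cut out by the consecutive subset $S$ (the various sign conventions and determinant twists are immaterial to what follows). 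Hence, under Getzler's identification $H^{n-2}(M_{0,n+1})\cong\mathsf{coLie}(n)$, the space $A(M^{\delta}_{0,n+1})^{n-2}=\bigcap_S\ker\gamma^{\vee}_S$ is the annihilator in $\mathsf{Lie}(n)^{*}$ of the union of the images of the partial compositions $\gamma_S:\mathsf{Lie}(p)\otimes\mathsf{Lie}(q)\to\mathsf{Lie}(n)$; and as $S$ runs over all consecutive subsets these compositions are exactly the partial compositions $\circ_i$ of the nonsymmetric operad $\mathsf{Lie}$, so their images span the decomposable subspace $\mathsf{Lie}(n)^{\mathrm{dec}}$. Thus $A(M^{\delta}_{0,n+1})^{n-2}\cong\bigl(\mathsf{Lie}(n)/\mathsf{Lie}(n)^{\mathrm{dec}}\bigr)^{*}$.

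Now I would invoke the lemma of Salvatore and Tauraso quoted above: $\mathsf{Lie}$ is freely generated as a nonsymmetric operad by the collection $\{P(m)\}_{m\geq 2}$. In a free nonsymmetric operad the decomposable part of each arity is spanned by the tree monomials with at least two internal vertices, while the one-vertex tree monomials form a complementary basis; for $\mathsf{Lie}$ with this presentation the tree monomials in arity $n$ are precisely the elements of $L(n)$ and the one-vertex ones are precisely the elements of $P(n)$. Consequently $\mathsf{Lie}(n)^{\mathrm{dec}}=\mathrm{span}\bigl(L(n)\setminus P(n)\bigr)$, the quotient $\mathsf{Lie}(n)/\mathsf{Lie}(n)^{\mathrm{dec}}$ has $P(n)$ as a basis, and in particular $\dim A(M^{\delta}_{0,n+1})^{n-2}=\#P(n)$.

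It remains to check that $\alpha$ maps $P(n)$ into $A(M^{\delta}_{0,n+1})^{n-2}$; granting this, $\alpha$ sends $P(n)$ to a linearly independent subset (it is part of the basis $\{\alpha_L\}_{L\in L(n)}$ of $H^{n-2}(M_{0,n+1})$) of a space of dimension exactly $\#P(n)$, so $\alpha$ restricts to the claimed isomorphism $P(n)\xrightarrow{\ \sim\ }A(M^{\delta}_{0,n+1})^{n-2}$. For the remaining claim I would use that Getzler's isomorphism $H^{n-2}(M_{0,n+1})\cong\mathsf{coLie}(n)$ is the one under which the basis $\{\alpha_L\}_{L\in L(n)}$ is dual to the monomial basis $\{L\}_{L\in L(n)}$ of $\mathsf{Lie}(n)$ — this is the precise content of the relation $\omega_L=\omega_{1n}\alpha_L$ together with the normalisation of Getzler's pairing, and can also be verified directly by pairing the forms $\omega_L$ against Lie monomials. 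Given this, for $P\in P(n)$ the class $\alpha_P$ corresponds to $P^{*}$, which kills every $L\in L(n)\setminus P(n)$ and hence kills $\mathsf{Lie}(n)^{\mathrm{dec}}$; equivalently $\gamma^{\vee}_S(\alpha_P)=0$ for every consecutive $S$, i.e.\ $\alpha_P\in A(M^{\delta}_{0,n+1})^{n-2}$, as desired.

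The step that will require the most care is the first one: matching the residue maps along the consecutive-collapse divisors with the cocompositions of the nonsymmetric cooperad $\mathsf{coLie}$, and, hand in hand with it, pinning down that $\{\alpha_L\}$ is exactly the basis dual to the Lie monomials $\{L\}$. Both facts lie just beneath the surface of Getzler's computation of $H^{\bullet}(M_{0,\bullet})$ (and of Brown's residue description of $A(M^{\delta}_{0,n+1})$ in \cite{Brown09}), but extracting them cleanly demands attention to cohomological shifts, the reduction from the symmetric cyclic operad to its nonsymmetric skeleton, and the sign conventions built into the Arnol'd presentation.
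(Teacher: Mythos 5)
Your proposal is correct and follows essentially the same route as the paper: Getzler's identification of the residue maps with the $\mathsf{coLie}$ cocompositions, Brown's description of $A(M^{\delta}_{0,n+1})^{n-2}$ as the common kernel of the residues along the consecutive-collapse divisors (dually, the nonsymmetric indecomposables of $\mathsf{Lie}(n)$), and the Salvatore--Tauraso freeness result to exhibit $P(n)$ as a basis. The only difference is cosmetic: where you finish by asserting that $\{\alpha_L\}$ is the basis dual to $\{L\}$, the paper instead verifies directly that $L\mapsto\alpha_L$ intertwines the residue cocompositions with those of $\mathsf{coLie}$ via the computation $\bigl(\tfrac{\partial}{\partial\omega_{1\,n-k+1}}\otimes\tfrac{\partial}{\partial\omega_{1k}}\bigr)\Delta(\omega_{1n}\alpha_L)=\alpha_{\Delta L}$ -- two logically interchangeable formulations of the same compatibility.
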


\begin{proof}
Getzler proved in \cite{Getzler95} that the isomorphisms \(H^{n-1}(M_{0,n+1})\cong \mathsf{coLie}(n)\) is one of cooperads, where the cooperadic cocomposition maps on \(H(M_0)[-1]\) are given by residue along the respective boundary divisors of the Deligne-Mumford compactification \(\overline{M}_0\). Brown's moduli spaces sit inside the Deligne-Mumford compactification as the partial compactification given by adding only the strata corresponding to nonsymmetric (co)compositions. Thus, \(A(M^{\delta}_{0,n+1})\) can be defined as the intersection of the kernels of all the cooperadic cocompositions \(H(M_{0,n+1})\to H(M_{0,n-k+2})\otimes H(M_{0,k+1})\). This says dually that the dual space of \(A(M^{\delta}_{0,n+1})\) is isomorphic to the cokernel of all the nonsymmetric operadic compositions that land in \(\mathsf{Lie}(n)\). Hence \(P(n)\) must be a basis for \(A(M^{\delta}_{0,n+1})\). That it has the explicit form given by \(\alpha\) boils down to proving that \(\alpha:L(n)\to H(M_{0,n+1})\) defines a morphism of nonsymmetric cooperads \(\mathsf{coLie}\to H(M_0)[-1]\), when the latter is identified with the subalgebra of the Arnol'd algebra spanned by all \(\omega_{ij}\) except \(\omega_{1n}\). To argue this, we note that the relevant nonsymmetric cocompositions (given by residue) of \(H(M_0)[-1]\) have the following form. Assume that the points labelled by a connected subset \(\{i,\dots,i+k\}\subset n\) collapse. Take some \(\alpha_L\), \(L\in L(n)\). Let \(\Delta\) be the cocomposition of \(\mathsf{coGer}\) corresponding to the collapse. The cocomposition of \(H(M_{0,n+1})\) is then given by
 \[
  \bigl(\frac{\partial}{\partial \omega_{1\,n-k+1}}\otimes \frac{\partial}{\partial \omega_{1k}}\bigr)\Delta(\omega_{1n}\alpha_L).
 \]
Recall \(\omega_L=\omega_{1n}\alpha_L\). We then use that \(\omega_L\) is a representative of the (co)Lie word \(L\) under the isomorphism \(\mathsf{coLie}(n)\cong\mathsf{coGer}(n)\), which is an isomorphism of cooperads, to conclude that the above equals
 \[
  \bigl(\frac{\partial}{\partial \omega_{1\,n-k+1}}\otimes \frac{\partial}{\partial \omega_{1k}}\bigr)\omega_{\Delta L},
 \]
the \(\Delta\) now referring to the cocomposition on \(\mathsf{coLie}\). In terms of bracketings, we can understand \(\alpha_L\) as defined by the same combinatorial rule as \(\omega\), except we first remove the outermost bracket of \(L\). Applying the partial derivatives above does the same thing, for the boundary restrictions of \(\omega_L\). Thus the above equals \(\alpha_{\Delta L}\).
\end{proof}
\begin{theorem}
The morphism \(Q\boldsymbol{\mathcal{Z}}_{\scriptstyle{\mathrm{ns}}}(A(M^{\delta}_0)) \to Q\boldsymbol{\mathcal{Z}}_{\scriptstyle{\mathrm{ns}}}(\mathsf{coGer})\) is surjective.
\end{theorem}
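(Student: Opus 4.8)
The plan is to reduce the statement, by naturality of $I^+$ and of the constructions of the previous sections, to a spanning statement inside the Arnol'd cooperad, and then to prove that statement by an induction grounded on the monomial basis of Lemma~\ref{arnoldbasis} and on the free generation of $\mathsf{Lie}$ by the prime bracketings. The morphism of ns DGCA cooperads with pullbacks $A(M^{\delta}_0)\to\mathsf{coGer}$ of the lemma above induces a morphism of complexes $\mathfrak{e}^+(A(M^{\delta}_0))\to\mathfrak{e}^+(\mathsf{coGer})$, and since $I^+$ and the surjection of Lemma~\ref{surjectiononindecomposables} are natural, one obtains a commutative square
 \[
  \begin{array}{ccc}
   H^0(\tau^{\leq 0}(\mathfrak{e}^+(A(M^{\delta}_0))[-1])) & \longrightarrow & Q\boldsymbol{\mathcal{Z}}_{\scriptstyle{\mathrm{ns}}}(A(M^{\delta}_0))\\
   \downarrow & & \downarrow\\
   H^0(\tau^{\leq 0}(\mathfrak{e}^+(\mathsf{coGer})[-1])) & \longrightarrow & Q\boldsymbol{\mathcal{Z}}_{\scriptstyle{\mathrm{ns}}}(\mathsf{coGer})
  \end{array}
 \]
in which the horizontal maps are the surjections of that lemma. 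Hence the theorem follows once one shows that the images of $A(M^{\delta}_0)$, the shuffle elements $\alpha\,\sha_{\iota,\iota'}\,\alpha'$, and the image of the differential $\partial_{\varepsilon}'$ together span the degree zero part $\bigoplus_{n\geq 3}\mathsf{coGer}(n)^{n-2}$ of $\tau^{\leq 0}(\mathfrak{e}^+(\mathsf{coGer})[-1])$: for then the composite along the bottom (equivalently, along the left) of the square is already onto, which forces the right vertical map to be onto.

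To make this precise I would first identify both sides. Because $M^{\delta}_{0,n+1}$ has dimension $n-2$, one less than the top cohomological degree of $\mathsf{coGer}(n)$, the complex $\mathfrak{e}^+(A(M^{\delta}_0))[-1]$ is concentrated in nonpositive degrees, so its truncation is inert and its degree zero part is $\bigoplus_n A(M^{\delta}_{0,n+1})^{n-2}$; by the corollary preceding this theorem this space has basis $\{\alpha_P:P\in P(n)\}$, and under the embedding $A(M^{\delta}_{0,n+1})\subset H(M_{0,n+1})\subset\mathsf{coGer}(n)$ one has $\alpha_P=\omega(M_P\smallsetminus\{(1,n)\})$, which is one of the monomials in the basis of Lemma~\ref{arnoldbasis}. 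On the other side $\mathsf{coGer}(n)^{n-2}$ has the monomial basis $\{\omega(M):M\in G(n)^{n-2}\}$, and a length-$(n-2)$ monomial on $n$ indices is a forest with exactly two connected components; so each such basis vector is a product of two monomials on disjoint index sets, and the $\alpha_P$ are exactly the ones both of whose components are bracketing-trees of \emph{non-consecutive} index sets, this being the content of primality.

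The substantial step is the spanning claim. I would argue by induction on $n$, with an inner induction on the number of non-consecutive pairs appearing in $M$. In the base case $M$ has the prime form and $\omega(M)$ is an image of $A(M^{\delta}_0)$. For the inductive step, if $\omega(M)$ lies in the image of a pullback map from a lower arity, or more generally arises from a nonsymmetric cooperadic cocomposition compatible with the one of $A(M^{\delta}_0)$, then the product map relations express $\omega(M)$ as a term of a shuffle $\alpha\,\sha_{\iota,\iota'}\,\alpha'$ of lower elements, the remaining terms of which — straightened against the Arnol'd relation exactly as in the proof of Lemma~\ref{arnoldbasis} — are combinations of monomials of strictly smaller complexity covered inductively; so $\omega(M)$ lies in the span of the shuffle elements modulo lower terms. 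In the remaining case, one uses the splitting $\mathsf{coGer}(n)=H(M_{0,n+1})\oplus\omega_{1n}H(M_{0,n+1})$ together with the quoted lemma of Salvatore--Tauraso — that $\mathsf{Lie}$ is freely generated as a nonsymmetric operad by $\{P(m)\}_{m\geq 2}$ — to rewrite the top-degree class obtained by reinstating the fibre factor $\omega_{1n}$ as an iterated cooperadic cocomposite of the forms $\omega_P$, modulo the image of $\partial_{\varepsilon}'$, and then to turn those cocomposites back into images of $A(M^{\delta}_0)$-elements, modulo shuffle products, via the product map relations.

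The hard part is this last case, and it is hard for two reasons. One must organize the rewriting so that it terminates, which is exactly what the free generation of $\mathsf{Lie}$ by the prime bracketings guarantees; and one must keep careful track of the fibre class $\omega_{1n}$, which is the only thing separating $A(M^{\delta}_{0,n+1})$ (of top degree $n-2$) from the top of $\mathsf{coGer}(n)$ (of degree $n-1$) and which is bridged precisely by the differential $\partial_{\varepsilon}'$. The Arnol'd relation, in the explicit basis-straightening form of Lemma~\ref{arnoldbasis}, is the computational engine throughout.
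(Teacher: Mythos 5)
Your reduction is the right one and matches the paper's: by naturality of \(I^+\) and Lemma \ref{surjectiononindecomposables} it suffices to show that \(\bigoplus_n \mathsf{coGer}(n)^{n-2}\) is spanned, modulo shuffle products \(\alpha\,\sha_{\iota,\iota'}\,\alpha'\) and the image of the differential, by the classes \(\alpha_P\) coming from \(A(M^{\delta}_0)\). But the spanning claim is precisely the content of the theorem, and you do not prove it: you explicitly defer ``the hard part,'' and the strategy you sketch for it is both different from what is needed and not shown to terminate. Concretely, for the \(\omega_{1n}\)-divisible classes you propose to ``reinstate the fibre factor,'' invoke the free generation of \(\mathsf{Lie}\) by prime bracketings, and work modulo \(\partial_{\varepsilon}'\); the actual argument needs none of this. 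If \(\alpha=\omega_{1n}\beta\) with \(n\geq 4\), then \(\beta\) can be taken to depend only on \(z_1,\dots,z_{n-1}\) or only on \(z_2,\dots,z_n\), and the product map relations give directly \(\alpha=\pm\,\beta\,\sha_{\iota,\iota'}\,\omega_{13}\) with \(\omega_{13}\in\mathsf{coGer}(3)\) -- a single explicit shuffle identity, no differential and no induction on ``complexity.'' Likewise for \(\alpha=\alpha_L\) with \(L\) non-prime: a non-prime \(L\) contains a (possibly deeply nested) sub-bracket enclosing a \emph{consecutive} subset, i.e.\ \(L=L_1\circ_i L_2\), and then \(\alpha=\pm\,\alpha_{L_1}\omega_{i\,i+1}\,\sha\,\alpha_{L_2}\) is again an explicit shuffle decomposition. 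Your characterization of the \(\alpha_P\) as the monomials ``both of whose components are bracketing-trees of non-consecutive index sets'' is also too weak: primality requires \emph{every} proper sub-bracket to be non-consecutive, not just the two top-level components, and it is exactly a consecutive nested sub-bracket that produces the shuffle factorization above.

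The place where the differential genuinely enters -- and which your proposal never isolates -- is \(n=3\). There \(P(3)=\emptyset\) and \(A(M^{\delta}_{0,4})^1=0\), so \(\omega_{12},\omega_{23},\omega_{13}\in\mathsf{coGer}(3)^1\) cannot be hit by anything from \(A(M^{\delta}_0)\) and are not shuffle products of lower-arity elements; one must check by hand that each is a coboundary, e.g.\ \(dI(\omega_{12}\in\mathsf{coGer}(4))=I(\omega_{12})+I(\omega_{12})-I(\omega_{12})=I(\omega_{12})\). Without this case your induction has no valid base in arity \(3\), and without the explicit shuffle identities the inductive step is an unverified plan rather than a proof. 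So the skeleton is sound, but the theorem lives entirely in the computations you have postponed.
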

\begin{proof}
We will argue that if \(I(\alpha)\in \boldsymbol{\mathcal{Z}}_{\scriptstyle{\mathrm{ns}}}(\mathsf{coGer})^0\) is not defined by an \(\alpha\in\mathsf{coGer}(n)^{n-2}\) of the form \(\alpha=\alpha_P\) for a prime bracketing \(P\in P(n)\), then it is zero in \(Q\boldsymbol{\mathcal{Z}}_{\scriptstyle{\mathrm{ns}}}(\mathsf{coGer})\). We shall first consider the case when \(n\geq 4\), as \(n=3\) turns out to be somewhat execptional. 

First assume that \(\alpha\) contains a factor \(\omega_{1n}\). Then we can write \(\alpha=\omega_{1n}\beta\) where \(\beta\) is either a form depending only on \(z_1,\dots, z_{n-1}\) or on \(z_2,\dots,z_n\). In the first case we have
 \[
  \alpha = \pm \beta \,\sha_{\{1,\dots,n-1\},\{1,n-1,n\}}\,\omega_{13},\; \omega_{13}\in\mathsf{coGer}(3),
 \]
and in the second case \(\alpha=\pm \beta \,\sha_{\{2,\dots,n\},\{1,2,n\}}\,\omega_{13}\), \(\omega_{13}\in\mathsf{coGer}(3)\). This settles the case for all forms \(\alpha\) divisible by \(\omega_{1n}\), because it shows \(I(\alpha)=0\) already in \(Q\mathfrak{Z}_{\scriptstyle{\mathrm{ns}}}(A(M^{\delta}_0))\). Consider next the case when \(\alpha\) is not divisible by \(\omega_{1n}\). We may then assume that \(\alpha=\alpha_L\) for a (co)Lie word \(L\in L(n)\). If \(L\) is a prime bracketing, then we are done. If it is not, then it contains a bracketing \(b\) that encloses a consecutive subset \(\{i,\dots,i+k-1\}\), corresponding to an operadic composition \(L_1\circ_i L_2 = L\). We can then write \(\alpha = \pm \alpha_{L_1}\omega_{1\,i+k-1}\omega_{L_2}\) and deduce
 \[
  \alpha = \pm  \alpha_{L_1}\omega_{i\,i+1} \,\sha_{\{1,\dots,i,i+k-1,\dots n\},\{i,\dots,i+k-1\}}\,\alpha_{L_2}.
 \]
We have now shown that if \(n\geq 4\) and \(\alpha\in\mathsf{coGer}(n)^{n-2}\) is not of the form \(\alpha=\alpha_P\), then \(I(\alpha)\) decomposes as a notrivial product in \(\mathfrak{Z}_{\scriptstyle{\mathrm{ns}}}(\mathsf{coGer})^0\).

To finalize, assume \(n=3\). In this case \(\alpha\) must equal one of the forms \(\omega_{12},\omega_{23},\omega_{13}\in\mathsf{coGer}(3)\). We note that 
 \[
 dI(\omega_{12}\in\mathsf{coGer}(4))= I(\omega_{12})+I(\omega_{12})-I(\omega_{12})=I(\omega_{12}).
 \]
It follows that \(I(\omega_{12})=0\) in \(\boldsymbol{\mathcal{Z}}_{\scriptstyle{\mathrm{ns}}}(\mathsf{coGer})\). Analogously, \(I(\omega_{23})=dI(\omega_{34})\) and \(I(\omega_{13})= - dI(\omega_{14})\).
\end{proof}
The algebra \(\mathfrak{Z}_{\scriptstyle{\mathrm{ns}}}(A(M^{\delta}_0))\) is in many ways nicer than the algebra \(\mathfrak{Z}_{\scriptstyle{\mathrm{ns}}}(\mathsf{coGer})\). To begin with, \(A(M^{\delta}_{0,n+1})\) is concentrated in degrees \(\leq n-2\), so the degree truncations \(\tau^{\leq -1}\) in our definitions are superflous. Secondly, it contains a family of elements that in a clear sense correspond to multiple zeta values.

Let \(w=0^{k_r-1}1\dots 0^{k_1-1} 1\) be a word in the letters \(0\) and \(1\), where we assume \(k_r\geq 2\). Write the words as \(w=\epsilon_1\dots \epsilon_{\ell}\), \(\rho_i\in \{0,1\}\). The multiple zeta value \(\zeta(k_1,\dots,k_r)\) is the real number
 \[
 \zeta(k_1,\dots, k_r) := (-1)^r\int_{0<t_1<\dots < t_{\ell}< 1} \wedge_{k=1}^{\ell} \rho_{\epsilon_k}(t_k),
 \]
where \(\rho_0(t):=d\log t\) and \(\rho_1(t):=d\log(1-t)\). Each such integral can be written as an integral on Brown's moduli spaces, of a form in \(A(M^{\delta}_0)\), as follows. Like before, use the gauge freedom to identify
 \[
 M_{0,n+1} = \{ (t_k)_{1\leq k\leq \ell} \in \mathbb{C}^n \mid t_i\neq t_j\;\mathrm{if}\;i\neq j,  t_k\neq 0,1\},
 \; \ell=n-2.
 \]
These coordinates \(t_k\) are related to the coordinates \(u_{ij}\) by
 \[
 1-t_k = u_{1n}u_{2n}\dots u_{kn}, \; 
 t_k = u_{k+1\,n+1} u_{k+2\,n+1}\dots u_{n-1\,n+1}, \; 1\leq k\leq \ell.
 \]
Using this, we can define a top-dimensional form
 \[
 \alpha(k_1,\dots,k_r):=(-1)^r \wedge_{k=1}^{\ell} \rho_{\epsilon_k}(t_k)\in A(M^{\delta}_{0,n+1})^{n-2},
 \]
and a corresponding degree zero element
 \[
 I(k_1,\dots,k_r):=I(\alpha(k_1,\dots,k_r))\in \mathfrak{Z}_{\scriptstyle{\mathrm{ns}}}(A(M^{\delta}_0)).
 \]
These are defined such that under the evaluation \(\mathfrak{Z}_{\scriptstyle{\mathrm{ns}}}(A(M^{\delta}_0))\to\mathbb{R}\) defined by integration on the embedded associahedra, \(I(k_1,\dots,k_r)\) is mapped to the multiple zeta value \(\zeta(k_1,\dots,k_r)\). 
\begin{remark}
The (image of the) generator \(x^{01}\) corresponds to the multiple zeta value \(\zeta(2)\), or to \(I(2)\). The tetrahedron \(\sigma_3\in\mathfrak{grt}_1\) corresponds to \(I(3)\).
\end{remark}
We end by making some conjectures.
\begin{conjecture}
There is a formal version \(\Phi\in \boldsymbol{\mathcal{Z}}_{\scriptstyle{\mathrm{ns}}}(A(M^{\delta}_0))\langle\langle x_0,x_1\rangle\rangle\) of the Knizhnik-Zamolodchikov Drinfel'd associator; having the \(I(k_1,\dots,k_r)\)'s as coefficients rather than actual multiple zeta values. Note that this would define a morphism from \(\mathfrak{grt}_1'\oplus\mathbb{Q}x^{01}\) to \(Q\boldsymbol{\mathcal{Z}}_{\scriptstyle{\mathrm{ns}}}(A(M^{\delta}_0))\). From \cite{Furusho11} it would then also follow that the \(I(k_1,\dots,k_r)\)'s satisfy the double shuffle relations. 
\end{conjecture}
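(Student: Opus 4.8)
The plan is to reduce the statement to the injectivity already established for the infinitesimal braids in Lemma~\ref{injectivebraids}, transporting the question along the quasi-isomorphism of cooperads $C(\mathfrak{t}')\to\mathsf{coGer}$. Concretely, I would set up the three-row commutative diagram whose middle row is the indecomposable-forming sequence for $C(\mathfrak{t}')$, whose top row is the same sequence for $\mathsf{coGer}$, and whose bottom vertex is $\tau^{\leq 0}(CH(\mathfrak{t}')[-1])$; the upward vertical maps come from the explicit section furnished by the Arnol'd basis of Lemma~\ref{arnoldbasis}, and the downward vertical map is the projection $\mathfrak{e}(C(\mathfrak{t}'))\to CH(\mathfrak{t}')$ onto tensor-length one. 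The key structural fact driving everything is that this linear projection annihilates every shuffle product, which has tensor-length $\geq 2$, while by Theorem~\ref{willwacherfurusho} it detects the Grothendieck-Teichm\"uller class.

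First I would take a degree-zero cocycle $\gamma$ in $\mathfrak{e}(\mathsf{coGer})[-1]$ representing a nonzero class in $\mathfrak{grt}_1'\oplus\mathbb{Q}x^{01}$ and lift it, using Lemma~\ref{arnoldbasis}, to a cochain $\tilde{\gamma}\in\bigoplus_n\mathbb{Q}[t_{ij}^*][n-2]\subset\mathfrak{e}(C(\mathfrak{t}'))[-1]$ by expanding $\gamma$ in the preferred monomial basis $G(n)$ and replacing each $\omega_{ij}$ by the Lie-length-zero generator $t_{ij}^*$. Since the relevant Grothendieck-Teichm\"uller representatives all live in arity $\geq 4$, the lift $\tilde{\gamma}$ is a product of at least two generators and therefore has vanishing tensor-length-one component, so it maps to zero in $CH(\mathfrak{t}')$ at the chain level; yet Theorem~\ref{willwacherfurusho} forces its class in $H^0(CH(\mathfrak{t}')[-1])$ to be nonzero. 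I may therefore write $\tilde{\gamma}=\alpha+(d_{CE}+d_{\varepsilon})\beta$ with $\alpha$ of nonzero linear part, exactly as in Corollary~\ref{representatives}.

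Then I would argue by contradiction. If $I^+(\gamma)$ vanished in $Q\boldsymbol{\mathcal{Z}}_{\scriptstyle{\mathrm{ns}}}(\mathsf{coGer})$, then by the description of the kernel of $I^+$ in Lemmas~\ref{surjectiononindecomposables} and~\ref{equalityofindecomposables} the class $\gamma$ would be cohomologous to a nontrivial shuffle product $\gamma_1\,\sha_{\iota,\iota'}\,\gamma_2$ with $\gamma_1,\gamma_2\in\mathfrak{e}^+$. Because $C(\mathfrak{t}')\to\mathsf{coGer}$ is a morphism of cooperads with pullbacks, the pullback maps and the wedge product defining this shuffle lift to $C(\mathfrak{t}')$; replacing each $\gamma_i$ by a cohomologous $\alpha_i$ with nonzero linear part and expanding $\alpha_1\,\sha_{\iota,\iota'}\,\alpha_2$, the Leibniz coboundary terms assemble into a single $(d_{CE}+d_{\varepsilon})$-exact correction, yielding $[\tilde{\gamma}]=[\alpha_1\,\sha_{\iota,\iota'}\,\alpha_2]$ in $H^0(\mathfrak{e}(C(\mathfrak{t}'))[-1])$. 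This exhibits the Grothendieck-Teichm\"uller class $[\tilde{\gamma}]$ as a shuffle product in $Q\boldsymbol{\mathcal{Z}}_{\scriptstyle{\mathrm{ns}}}(C(\mathfrak{t}'))$, contradicting Lemma~\ref{injectivebraids}.

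The hard part will be the honesty of the lifting in the middle step: the section of Lemma~\ref{arnoldbasis} is only a linear splitting of the Arnol'd relations, not a priori a chain map for the twisted differential, so one must check that $\tilde{\gamma}$ is a genuine cocycle for $d_{CE}+d_{\varepsilon}$ and that the decomposition descends to the correct $(d_{CE}+d_{\varepsilon})$-cohomology class. This is delicate precisely because two differentials are in play, namely the Chevalley-Eilenberg differential, whose cohomology is $\mathsf{coGer}$, and the deformation differential $d_{\varepsilon}$, whose cohomology is governed by Theorem~\ref{willwacherfurusho}; reconciling them is what makes the identity $[\tilde{\gamma}]=[\alpha_1\,\sha_{\iota,\iota'}\,\alpha_2]$ nontrivial. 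Once this compatibility is secured, the linear projection does the rest, since it kills every shuffle product while detecting $\mathfrak{grt}_1'\oplus\mathbb{Q}x^{01}$.
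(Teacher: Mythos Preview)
Your proposal addresses the wrong statement. The item you were asked to prove is a \emph{conjecture} in the paper and has no proof there; it asserts the existence of a formal Knizhnik--Zamolodchikov associator \(\Phi\in \boldsymbol{\mathcal{Z}}_{\scriptstyle{\mathrm{ns}}}(A(M^{\delta}_0))\langle\langle x_0,x_1\rangle\rangle\) whose coefficients are the formal symbols \(I(k_1,\dots,k_r)\), together with the consequences that would follow (a map \(\mathfrak{grt}_1'\oplus\mathbb{Q}x^{01}\to Q\boldsymbol{\mathcal{Z}}_{\scriptstyle{\mathrm{ns}}}(A(M^{\delta}_0))\) and the double shuffle relations via \cite{Furusho11}). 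What you have written is instead a proof sketch for the \emph{theorem} earlier in the paper establishing the injection \(\mathfrak{grt}_1'\oplus\mathbb{Q}x^{01}\hookrightarrow Q\boldsymbol{\mathcal{Z}}_{\scriptstyle{\mathrm{ns}}}(\mathsf{coGer})\). Every ingredient you invoke---the diagram with \(CH(\mathfrak{t}')\), the Arnol'd-basis lift of Lemma~\ref{arnoldbasis}, the tensor-length-one projection, the contradiction via Lemma~\ref{injectivebraids}---belongs to that theorem's proof, and indeed your outline tracks that proof quite closely.

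Nothing in your argument touches what the conjecture actually demands: one would need to show that the pentagon (and related associator) equations hold among the \(I(k_1,\dots,k_r)\)'s inside \(\boldsymbol{\mathcal{Z}}_{\scriptstyle{\mathrm{ns}}}(A(M^{\delta}_0))\), i.e., that the relations satisfied by genuine multiple zeta values in the KZ associator already follow from Stokes' relations and the product map relations alone. That is an open problem in the paper, and your diagram-chase for \(\mathsf{coGer}\) has no bearing on it.
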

\begin{conjecture}
The formal weights \(I(k_1,\dots,k_r)\) generate \(\boldsymbol{\mathcal{Z}}_{\scriptstyle{\mathrm{ns}}}(A(M^{\delta}_0))\). Combined with the previous conjecture this would imply that the morphism from \(\mathfrak{grt}_1'\oplus\mathbb{Q}x^{01}\) to \(Q\boldsymbol{\mathcal{Z}}_{\scriptstyle{\mathrm{ns}}}(A(M^{\delta}_0))\) defined by the formal Knizhnik-Zamolodchikov associator is onto.
\end{conjecture}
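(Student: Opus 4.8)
The plan is to leverage the explicit combinatorial bases established above. By Lemma~\ref{surjectiononindecomposables} the space $Q\boldsymbol{\mathcal{Z}}_{\scriptstyle{\mathrm{ns}}}(\mathsf{coGer})$ is spanned by the classes of the elements $I(\alpha)$ with $\alpha\in\mathsf{coGer}(n)^{n-2}$ (arity $2$ contributing only the unit, hence nothing to $Q$), and since $\alpha\mapsto I(\alpha)$ is linear, Lemma~\ref{arnoldbasis} lets us assume $\alpha$ is a basis monomial. On the other side, the morphism $Q\boldsymbol{\mathcal{Z}}_{\scriptstyle{\mathrm{ns}}}(A(M^{\delta}_0))\to Q\boldsymbol{\mathcal{Z}}_{\scriptstyle{\mathrm{ns}}}(\mathsf{coGer})$ is induced by the cooperad inclusion $A(M^{\delta}_{0,n+1})\hookrightarrow\mathsf{coGer}(n)$ and hence carries $I(\beta)$, $\beta\in A(M^{\delta}_{0,n+1})^{n-2}$, to $I(\beta)$; by the corollary identifying $A(M^{\delta}_{0,n+1})^{n-2}$ with the span of the prime forms, the $\alpha_P$ with $P\in P(n)$ are precisely such $\beta$'s. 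Thus it suffices to prove that for every top-degree basis monomial $\alpha$ the class $I(\alpha)$ is, modulo decomposables and boundaries, a $\mathbb Q$-combination of the classes $I(\alpha_P)$, $P$ prime.

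Assume first $n\geq 4$, and use the splitting $\mathsf{coGer}(n)^{n-2}=H^{n-2}(M_{0,n+1})\oplus\omega_{1n}H^{n-3}(M_{0,n+1})$; by linearity of $I$ it suffices to treat elements of each summand separately. If $\alpha=\omega_{1n}\beta$ is divisible by $\omega_{1n}$, repeated use of the Arnol'd relation rewrites $\omega_{1n}\beta$ as a sum of monomials $\omega_{1n}\beta'$ with $\beta'$ supported on the indices $\{1,\dots,n-1\}$, respectively on $\{2,\dots,n\}$; such a monomial equals, up to sign, the shuffle $\beta'\,\sha_{\iota,\iota'}\,\omega_{13}$ with $\omega_{13}\in\mathsf{coGer}(3)$, $\iota$ the inclusion missing $n$ (respectively $1$), $\iota'=\{1,n-1,n\}$ (respectively $\{1,2,n\}$), the relevant shuffle group being trivial and $\pi^{*}_{\iota'}(\omega_{13})=\omega_{1n}$. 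By the product-map relation $I(\omega_{1n}\beta')=\pm I(\beta')\cdot I(\omega_{13})$, a product of two elements of the augmentation ideal, so $I(\alpha)$ vanishes already in $Q\mathfrak{Z}_{\scriptstyle{\mathrm{ns}}}(\mathsf{coGer})$, hence in $Q\boldsymbol{\mathcal{Z}}_{\scriptstyle{\mathrm{ns}}}(\mathsf{coGer})$.

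If instead $\alpha\in H^{n-2}(M_{0,n+1})$, then $\alpha$ is a combination of the Arnol'd forms $\alpha_L$, $L\in L(n)$, so by linearity take $\alpha=\alpha_L$. If $L$ is prime, then $\alpha_L=\alpha_P$ lies in $A(M^{\delta}_{0,n+1})^{n-2}$ and $I(\alpha_L)$ is in the image, and we are done. If $L$ is not prime, then --- using that $\mathsf{Lie}$ is freely generated as a nonsymmetric operad by $\{P(n)\}$ --- $L$ factors as an operadic composition $L=L_1\circ_i L_2$ cutting along a connected sub-bracket supported on a consecutive block $\{i,\dots,i+k-1\}$; translating this into an identity of Arnol'd forms and invoking the product-map relation along $\sha_{\{1,\dots,i,i+k-1,\dots,n\},\{i,\dots,i+k-1\}}$ exhibits $I(\alpha_L)$ as a product of two augmentation-ideal elements, hence again as $0$ in $Q\boldsymbol{\mathcal{Z}}_{\scriptstyle{\mathrm{ns}}}(\mathsf{coGer})$. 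This disposes of all arities $n\geq 4$.

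Finally, arity $3$ is exceptional because $M^{\delta}_{0,4}\cong\mathbb{A}^1$ is contractible, so $A(M^{\delta}_{0,4})^{1}=0$ and nothing arises from $A(M^{\delta}_0)$ in arity $3$. But $\mathsf{coGer}(3)^{1}$ is spanned by $\omega_{12},\omega_{13},\omega_{23}$, and each $I(\omega_{ij})$ is a boundary: summing the three arity-$4$ cocompositions gives $d\,I(\omega_{12}\in\mathsf{coGer}(4))=I(\omega_{12})+I(\omega_{12})-I(\omega_{12})=I(\omega_{12})$ in $\mathfrak{Z}_{\scriptstyle{\mathrm{ns}}}(\mathsf{coGer})^{0}$, and likewise $I(\omega_{23})=d\,I(\omega_{34})$, $I(\omega_{13})=-d\,I(\omega_{14})$, so these classes vanish in $\boldsymbol{\mathcal{Z}}_{\scriptstyle{\mathrm{ns}}}(\mathsf{coGer})$ and a fortiori in $Q\boldsymbol{\mathcal{Z}}_{\scriptstyle{\mathrm{ns}}}(\mathsf{coGer})$. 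Combining the three cases yields surjectivity. The main obstacle is the last layer of bookkeeping: verifying the Arnol'd-normalization of $\beta$ in the $\omega_{1n}$-divisible case, and pinning down the precise form identities, the signs, and the degeneration of the shuffle groups in the two product-map relations --- in particular choosing, in the non-prime case, a factorization for which the product-map relation genuinely splits $I(\alpha_L)$ into augmentation-ideal factors.
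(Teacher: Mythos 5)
There is a fundamental mismatch: the statement you were asked to address is the \emph{conjecture} that the specific elements \(I(k_1,\dots,k_r)\) (those coming from the forms \(\alpha(k_1,\dots,k_r)=(-1)^r\wedge_k\rho_{\epsilon_k}(t_k)\) built out of \(d\log t\) and \(d\log(1-t)\)) generate the algebra \(\boldsymbol{\mathcal{Z}}_{\scriptstyle{\mathrm{ns}}}(A(M^{\delta}_0))\). What you have written is instead a proof of the \emph{preceding theorem}, namely that \(Q\boldsymbol{\mathcal{Z}}_{\scriptstyle{\mathrm{ns}}}(A(M^{\delta}_0))\to Q\boldsymbol{\mathcal{Z}}_{\scriptstyle{\mathrm{ns}}}(\mathsf{coGer})\) is surjective --- and for that theorem your argument does essentially coincide with the paper's (splitting off the \(\omega_{1n}\)-divisible part via a shuffle with \(\omega_{13}\in\mathsf{coGer}(3)\), factoring non-prime \(\alpha_L\) through an operadic composition \(L_1\circ_i L_2\), and killing the arity-\(3\) classes as boundaries). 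None of this says anything about the conjecture: your reduction lands on the statement that the classes \(I(\alpha_P)\), \(P\in P(n)\) prime, generate, but the conjecture asks for the much stronger claim that the particular subfamily \(I(k_1,\dots,k_r)\) already generates all of \(\boldsymbol{\mathcal{Z}}_{\scriptstyle{\mathrm{ns}}}(A(M^{\delta}_0))\), i.e.\ that every generator \(I(\beta)\), \(\beta\in A(M^{\delta}_{0,n+1})^{n-2}\), can be rewritten as a polynomial in the \(I(k_1,\dots,k_r)\)'s using only the Stokes and product-map relations. That is a formal (symbol-level) analogue of Brown's theorem that every period of \(M^{\delta}_{0,n+1}\) is a \(\mathbb{Q}\)-linear combination of multiple zeta values, and it is precisely what the paper does \emph{not} prove: the statement is left as an open conjecture, with no proof supplied.

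To actually attack the conjecture you would need a mechanism for converting an arbitrary prime form \(\alpha_P\) into the \(\omega(0,1)\)-coordinates \(t_k\), reproducing inside \(\mathfrak{Z}_{\scriptstyle{\mathrm{ns}}}(A(M^{\delta}_0))\) the manipulations (changes of variables, fibration/stability arguments, shuffle and stuffle identities) that Brown carries out analytically for the period integrals. Your proposal contains no step of this kind, so it cannot be repaired into a proof of the stated claim; it should instead be filed as a (correct, but redundant) re-derivation of the surjectivity theorem that precedes the conjectures.
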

\begin{conjecture}
The indecomposable cohomological weights \(Q\boldsymbol{\mathcal{Z}}_{\scriptstyle{\mathrm{ns}}}(A(M^{\delta}_0))\) and \(Q\boldsymbol{\mathcal{Z}}_{\scriptstyle{\mathrm{ns}}}(\mathsf{coGer})\) are isomorphic, and both are isomorphic to \(\mathfrak{grt}_1'\oplus\mathbb{Q}x^{01}\).
\end{conjecture}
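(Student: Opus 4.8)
The plan is to describe $Q\boldsymbol{\mathcal{Z}}_{\scriptstyle{\mathrm{ns}}}(\mathsf{coGer})$ explicitly as a quotient of a span of ``top-form'' generators, single out the generators that come from $A(M_0^{\delta})$, and show every other generator dies modulo shuffles and coboundaries. First I would unwind the map: it is $Q\boldsymbol{\mathcal{Z}}_{\scriptstyle{\mathrm{ns}}}$ applied to the cooperad morphism $A(M_0^{\delta})\to\mathsf{coGer}$, which in arity $n$ is the inclusion $A(M_{0,n+1}^{\delta})\subset H(M_{0,n+1})\hookrightarrow\mathsf{coGer}(n)$ onto the subalgebra generated by the $\omega_{ij}$ with $(i,j)\neq(1,n)$. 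Since $\mathfrak{Z}_{\scriptstyle{\mathrm{ns}}}(\mathsf{coGer})$ is concentrated in non-positive degrees, its degree-$0$ generators are exactly the $I(\alpha)$ with $\alpha\in\mathsf{coGer}(n)^{n-2}$ (and $n\geq 3$, once the unit of $\mathsf{coGer}(2)$ is discarded); by \ref{equalityofindecomposables} together with the product map relations, $Q\boldsymbol{\mathcal{Z}}_{\scriptstyle{\mathrm{ns}}}(\mathsf{coGer})$ is the quotient of $\bigoplus_{n\geq 3}\mathsf{coGer}(n)^{n-2}$ by all shuffles $\alpha\,\sha_{\iota,\iota'}\,\alpha'$ and by the image of the differential from degree $-1$. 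Using the corollary identifying $\alpha:P(n)\xrightarrow{\sim}A(M_{0,n+1}^{\delta})^{n-2}$, the image of the generators originating from $A(M_0^{\delta})$ is spanned by the classes $I(\alpha_P)$, $P\in P(n)$. Thus the theorem amounts to showing that every $I(\alpha)$, $\alpha\in\mathsf{coGer}(n)^{n-2}$, is, modulo shuffles and coboundaries, a linear combination of the $I(\alpha_P)$.

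By Lemma \ref{arnoldbasis} and the splitting $\mathsf{coGer}(n)=H(M_{0,n+1})\oplus\omega_{1n}H(M_{0,n+1})$, a monomial of $\mathsf{coGer}(n)^{n-2}$ in the preferred basis is either divisible by $\omega_{1n}$ or else equals $\alpha_L$ for some $L\in L(n)$, these two alternatives being exhaustive and mutually exclusive. Take $n\geq 4$. For a monomial $\alpha$ divisible by $\omega_{1n}$ I would use the Arnol'd relations to write $\alpha$ as a combination of monomials $\omega_{1n}\beta$ with $\beta$ involving only the indices $1,\dots,n-1$, or only the indices $2,\dots,n$; in the former case $\beta\in\mathsf{coGer}(n-1)^{n-3}$ and $\omega_{1n}\beta=\pm\,\beta\,\sha_{\{1,\dots,n-1\},\{1,n-1,n\}}\,\omega_{13}$ with $\omega_{13}\in\mathsf{coGer}(3)^{1}$ (the relevant shuffle group is a singleton, so this shuffle reduces to $\pi^{*}_{\iota}(\beta)\wedge\pi^{*}_{\iota'}(\omega_{13})$), and symmetrically in the latter, so the product map relation gives $I(\alpha)=\pm I(\beta)\cdot I(\omega_{13})$, a decomposable. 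For $\alpha=\alpha_L$ with $L\in L(n)$ not prime, the Salvatore--Tauraso presentation of $\mathsf{Lie}$ as the free ns operad on the primes $\{P(m)\}_{m\geq 2}$ gives a decomposition $L=L_1\circ_i L_2$ along a consecutive block; correspondingly $\alpha_L=\pm(\alpha_{L_1}\omega_{i\,i+1})\,\sha_{\iota,\iota'}\,\alpha_{L_2}$ for suitable injections $\iota,\iota'$ overlapping at the block's endpoints, again a shuffle product, so $I(\alpha_L)$ is decomposable; if $L$ is prime there is nothing to prove. Finally $n=2,3$ are special: for $n=2$ the only degree-$0$ generator is the unit $I(u)=1$, and for $n=3$ one has $P(3)=\varnothing$ while $I(\omega_{12}),I(\omega_{23}),I(\omega_{13})\in\mathsf{coGer}(3)$ are all coboundaries --- the cocompositions of $\mathsf{coGer}(4)$ landing on the unit of $\mathsf{coGer}(2)$ yield $dI(\omega_{12}\in\mathsf{coGer}(4))=I(\omega_{12})+I(\omega_{12})-I(\omega_{12})=I(\omega_{12})$, and likewise $I(\omega_{23})=dI(\omega_{34})$, $I(\omega_{13})=-dI(\omega_{14})$ --- so they vanish already in $\boldsymbol{\mathcal{Z}}_{\scriptstyle{\mathrm{ns}}}(\mathsf{coGer})$.

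The step I expect to be the main obstacle is the rewriting just used for monomials divisible by $\omega_{1n}$: one must verify that the Arnol'd relations alone push any such monomial into a combination of monomials $\omega_{1n}\beta$ supported on one of the two ``end'' index blocks, so that $I(\alpha)$ genuinely factors as a product of two strictly-lower-arity generators under a shuffle whose indexing group is trivial. The arity-and-degree bookkeeping is forced --- a shuffle of $\mathsf{coGer}(n-1)^{n-3}$ with $\mathsf{coGer}(3)^{1}$ does land in $\mathsf{coGer}(n)^{n-2}$ --- so the difficulty is purely combinatorial, in controlling the Arnol'd relations; the analogous decomposition of the non-prime $\alpha_L$ is cleaner because it is dictated by the operad structure, and the arity-$3$ case must be handled by hand as above. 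Once Lemma \ref{arnoldbasis} and the identification $A(M_{0,n+1}^{\delta})^{n-2}\cong\mathbb{Q}\{P(n)\}$ are available, the rest is formal.
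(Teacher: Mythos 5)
The statement you are asked about is the paper's final \emph{conjecture}, which the paper explicitly leaves open; what you have written out is, almost verbatim, the paper's proof of the \emph{preceding theorem}, namely that $Q\boldsymbol{\mathcal{Z}}_{\scriptstyle{\mathrm{ns}}}(A(M^{\delta}_0)) \to Q\boldsymbol{\mathcal{Z}}_{\scriptstyle{\mathrm{ns}}}(\mathsf{coGer})$ is \emph{surjective}. Your case analysis --- monomials divisible by $\omega_{1n}$ factor as a shuffle with $\omega_{13}\in\mathsf{coGer}(3)$, non-prime $\alpha_L$ factor along an operadic composition $L_1\circ_i L_2$, and the arity-$3$ generators are coboundaries via $dI(\omega_{12}\in\mathsf{coGer}(4))=I(\omega_{12})$ --- is exactly the paper's argument for that theorem, and as far as it goes it is fine. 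But it establishes only that the classes $I(\alpha_P)$, $P\in P(n)$, generate $Q\boldsymbol{\mathcal{Z}}_{\scriptstyle{\mathrm{ns}}}(\mathsf{coGer})$, i.e.\ surjectivity of the map on indecomposables.

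The conjecture asserts two isomorphisms, and neither follows from what you prove. First, you never address \emph{injectivity} of $Q\boldsymbol{\mathcal{Z}}_{\scriptstyle{\mathrm{ns}}}(A(M^{\delta}_0)) \to Q\boldsymbol{\mathcal{Z}}_{\scriptstyle{\mathrm{ns}}}(\mathsf{coGer})$: a class $I(\alpha_P)$ could a priori be killed in $Q\boldsymbol{\mathcal{Z}}_{\scriptstyle{\mathrm{ns}}}(\mathsf{coGer})$ by a shuffle relation or coboundary involving forms of $\mathsf{coGer}$ that do \emph{not} lie in the subalgebra $A(M^{\delta}_0)$, and nothing in your argument rules this out. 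Second, the identification with $\mathfrak{grt}_1'\oplus\mathbb{Q}x^{01}$ requires that the injection $\mathfrak{grt}_1'\oplus\mathbb{Q}x^{01}\cong H^0(\mathfrak{e}(\mathsf{coGer})[-1])\to Q\boldsymbol{\mathcal{Z}}_{\scriptstyle{\mathrm{ns}}}(\mathsf{coGer})$ of the paper's main theorem be \emph{onto}. By Lemma~\ref{surjectiononindecomposables} the larger space $H^0(\tau^{\leq 0}(\mathfrak{e}[-1]))$ surjects onto $Q\boldsymbol{\mathcal{Z}}_{\scriptstyle{\mathrm{ns}}}$, but the truncation $\tau^{\leq 0}$ creates degree-zero cocycles that are not cocycles in the untruncated complex $\mathfrak{e}(\mathsf{coGer})[-1]$, and one would have to show that all of these land in the image of $\mathfrak{grt}_1'\oplus\mathbb{Q}x^{01}$ (equivalently, compute the kernel of $H^0Q(I^+)$ on this excess). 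Your proposal does not engage with either of these points, which are exactly the content of the conjecture beyond what the paper already proves.
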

\bibliography{algebraofformalweights}{}

\begin{thebibliography}{10}

\bibitem{Alm13}
J.~Alm.
\newblock Universal algebraic structures on polyvector fields.
\newblock Ph.{D}.~thesis, {S}tockholm university, 2013.
\newblock url: \texttt{diva-portal.org/smash/get/diva2:696219/FULLTEXT02}.

\bibitem{Arnold69}
V.I. Arnol'd.
\newblock The cohomology ring of the colored braid group.
\newblock {\em Mathematical notes of the {A}cademy of {S}ciences of the
  {USSR}}, 5(2):138--140, 1969.

\bibitem{Brown09}
F.~Brown.
\newblock Multiple {Z}eta {V}alues and periods of moduli spaces ${M}_{0,n}$.
\newblock Ph.{D}.~thesis, \'Ecole normale sup\'erieur, 2009.
\newblock url: \texttt{arxiv.org/abs/math/0606419}.

\bibitem{Brown12}
F.~Brown.
\newblock Mixed {T}ate {M}otives over $\mathbb{Z}$.
\newblock {\em Annals of {M}athematics}, 175(2):949–976, 2012.
\newblock url: \texttt{ihes.fr/~brown/MTZ.pdf}.

\bibitem{BrownCarrSchneps09}
F.~Brown, S.~Carr, and L.~Schneps.
\newblock The algebra of cell-zeta values.
\newblock {\em Compositio {M}athematica}, 146:732--771, 2010.
\newblock url: \texttt{arxiv.org/abs/0910.0122}.

\bibitem{Deligne71}
P.~Deligne.
\newblock Th\'eorie de {H}odge {II}.
\newblock {\em {P}ublications {M}ath\'ematiques de l'{IH}\'{E}{S}}, (44):5--77,
  1974.

\bibitem{DolgushevWillwacher12}
V.~Dolgushev and T.~Willwacher.
\newblock Operadic {T}wisting -- with an application to {D}eligne's conjecture,
  2012.
\newblock url: \texttt{arxiv.org/abs/1207.2180}.

\bibitem{Furusho03}
H.~Furusho.
\newblock The multiple zeta value algebra and the stable derivation algebra.
\newblock {\em Publications of the {RIMS}, {K}yoto {U}niversity}, 39:695--720,
  2003.
\newblock url: \texttt{arxiv.org/abs/math/0011261}.

\bibitem{Furusho10}
H.~Furusho.
\newblock Pentagon and hexagon equations.
\newblock {\em Annals of {Ma}thematics}, 171(1):545--556, 2010.
\newblock url: \texttt{arxiv.org/abs/math/0702128}.

\bibitem{Furusho11}
H.~Furusho.
\newblock Double shuffle relation for associators.
\newblock {\em Annals of {Ma}thematics}, 173(1):341--360, 2011.
\newblock url: \texttt{arxiv.org/abs/0808.0319}.

\bibitem{Getzler95}
E.~Getzler.
\newblock Operads and moduli spaces of genus 0 {R}iemann surfaces.
\newblock {\em Progress in {M}athematics}, 129:199--230, 1995.
\newblock url: \texttt{arxiv.org/abs/alg-geom/9411004}.

\bibitem{Kontsevich99}
M.~Kontsevich.
\newblock Operads and {M}otives in {D}eformation {Q}uantization.
\newblock {\em Letters in {M}athematical {P}hysics}, 48(1):35--72, 1999.
\newblock url: \texttt{http://arxiv.org/abs/math/9904055}.

\bibitem{LambrechtsVolic08}
P.~Lambrechts and I.~Voli\'c.
\newblock Formality of the little \(n\)-discs operad, 2008.
\newblock url: \texttt{arxiv:0808.0457}.

\bibitem{LodayVallette12}
J.-L. Loday and B.~Vallette.
\newblock {\em Algebraic {O}perads}.
\newblock {G}rundlehren der mathematischen {W}issenschaften. Springer-{V}erlag,
  2012.
\newblock url: \texttt{math.unice.fr/~brunov/Operads.pdf}.

\bibitem{SalvatoreTauraso09}
P.~Salvatore and R.~Tauraso.
\newblock The operad {L}ie is free.
\newblock {\em Journal of {P}ure and {A}pplied {A}lgebra}, 213:224--230, 2009.
\newblock url: \texttt{http://arxiv.org/pdf/0802.3010.pdf}.

\bibitem{Tamarkin02}
D.~Tamarkin.
\newblock Action of the {G}rothendieck-{T}eichm\"uller {L}ie algebra on the
  operad of {G}erstenhaber algebras, 2002.
\newblock url: \texttt{arXiv:math/0202039}‎.

\bibitem{SeveraWillwacher11}
P.~\v{S}evera and T.~Willwacher.
\newblock Equivalence of formalities of the little discs operad.
\newblock {\em Duke {M}athematical {J}ournal}, 160(1):175--206, 2011.
\newblock url: \texttt{arxiv.org/abs/0905.1789}.

\bibitem{Willwacher10}
T.~Willwacher.
\newblock M. {K}ontsevich's graph complex and the {G}rothendieck-{T}eichmueller
  {L}ie algebra, 2010.
\newblock url: \texttt{arxiv.org/abs/1009.1654}.

\end{thebibliography}
\bibliographystyle{plain}

\end{document}